\newtheorem{theorem}{Theorem}
\newtheorem{lemma}[theorem]{Lemma}
\newtheorem{corollary}[theorem]{Corollary}
\newtheorem{proposition}[theorem]{Proposition}
\newtheorem{example}[theorem]{Example}
\newtheorem{remark}[theorem]{Remark}
\newcommand{\tto}{\twoheadrightarrow}
\newcommand{\shift}[1]{  \langle #1 \rangle}
\begin{document}
\title[BGG complexes in singular blocks of category ${\mathcal{O}}$]
{BGG complexes in singular blocks of category ${\mathcal{O}}$}

\author{Volodymyr Mazorchuk and Rafael Mr{\dj}en}

\begin{abstract}
Using translation from the regular block, we construct and analyze properties of 
BGG complexes in singular blocks of BGG category ${\mathcal{O}}$. We provide
criteria, in terms of the Kazhdan-Lusztig-Vogan polynomials, for such complexes 
to be exact. In the Koszul dual picture, exactness of BGG complexes is expressed 
as a certain condition on a generalized Verma flag of an indecomposable projective 
object in the corresponding block of parabolic category ${\mathcal{O}}$. 

In the second part of the paper, we construct BGG complexes in a more general setting 
of balanced quasi-hereditary algebras and show how our results for singular blocks
can be used to construct BGG resolutions of simple modules in ${\mathcal{S}}$-subcategories 
in ${\mathcal{O}}$.
\end{abstract}

\maketitle

\section{Introduction and description of the results}\label{s1}

The classical BGG resolution from \cite{bernstein1975differential} 
is a resolution of a 
simple finite dimensional module over a semi-simple complex finite 
dimensional Lie algebra
in terms of Verma modules. It has many applications, for example, 
it can be used to compute
all self-extensions of this simple finite dimensional module inside 
BGG category $\mathcal{O}$,
see \cite{BGG2}. Various generalizations and analogues of BGG 
resolution were studied in many 
different contexts, see, e.g.,  
\cite{lepowsky1977generalization,rocha-caridi1980splitting,
boe2009kostant,enright2004resolutions,futorny1998bgg,konig,kevin}.
Some geometric constructions of BGG complexes in the setting of 
homogeneous bundles and invariant differential operators (which is, 
in a certain sense, dual to category $\mathcal{O}$) were described in
\cite{cap2001bernstein,pandzic2016bgg,mrden2017singular,husadzic2018singular}.

One natural generalization is to consider resolutions of finite dimensional
simple modules using ``standard'' modules in other categories, for example
in the parabolic version of $\mathcal{O}$. This question was studied 
in \cite{lepowsky1977generalization}. Another natural generalization is
to ask which other simple module in $\mathcal{O}$ have resolutions by
Verma modules. For modules inside the regular block of $\mathcal{O}$,
this question was studied in \cite{boe2009kostant}.

The aim of the present paper is to investigate which simple modules in
singular blocks of $\mathcal{O}$ have resolutions by Verma modules.
We give a combinatorial answer involving M{\"o}bius function for the
poset of shortest coset representatives in the cosets of a Weyl group
modulo the parabolic subgroup associated with the singularity and 
Kazhdan-Lusztig-Vogan polynomials. The answer is explicit enough to
be verifiable by a computer, so we provide the lists of such modules
in small ranks (or, more precisely, we provide lists of modules which
do not have this property as in small ranks the number of the latter
modules is significantly smaller).

Our results also have applications to construction of BGG type resolutions
for non-quasi-hereditary generalizations of category $\mathcal{O}$
studied in \cite{futorny2000s}. In fact, our results provide a complete
answer for existence of BGG resolutions of simple modules in the regular
block for such categories. Using the equivalence in \cite{MS}, this
implies existence of BGG-type resolutions for very general setups of
parabolically induced modules. 

The paper is organized as follows. We describe our basic setup in
Section~\ref{s2}. In Section~\ref{s3}, we collect some auxiliary
statement about combinatorics of Bruhat order on Weyl groups.
In Section~\ref{s4} we define singular BGG complexes and we
study their exactness, in terms of Kazhdan-Lusztig-Vogan polynomials,
in Section~\ref{s5}. In Section~\ref{s6} we connect the problem
of existence of BGG resolutions with the Koszul dual picture
of parabolic category $\mathcal{O}$, where
the problem is reformulated in terms of Verma flags of 
indecomposable projective modules. This allows us to give a
sufficient and necessary conditions for existence of BGG resolution
in  Proposition~\ref{item:kostantprop}. 
In Section~\ref{s7} we list results of computations in low rank case.
In Section~\ref{s8} we interpret BGG complexes and resolution in terms
of complexes of structural modules over balanced quasi-hereditary algebras.
Finally, in Section~\ref{s9} we describe how our results can be applied
to construct BGG resolutions for $\mathcal{S}$-subcategories in $\mathcal{O}$.
\vspace{0.5cm}
 
\textbf{Acknowledgements:} This research was partially supported by
the Swedish Research Council, G{\"o}ran Gustafsson Stiftelse and Vergstiftelsen. 
R. M. was also partially supported by the QuantiXLie Center of Excellence grant 
no. KK.01.1.1.01.0004 funded by the European Regional Development Fund.
We thank Axel Hultman for help with Lemma~\ref{lemma:intersections_minimal}.

\section{Setup}\label{s2}

In this paper we work over the base filed $\mathbb{C}$ of complex numbers.

We let $\mathfrak{g}$ denote a semi-simple finite dimensional Lie algebra
with a fixed triangular decomposition ${\mathfrak{g}} = {\mathfrak{n}}^- \oplus {\mathfrak{h}} \oplus {\mathfrak{n}}^+$. Associated to such a  triangular decomposition, we have the
corresponding BGG category $\mathcal{O}$, see \cite{BGG2,Hu}.

For $\lambda\in\mathfrak{h}^*$, we denote by ${\Delta}(\lambda)$ the
Verma module with highest weight $\lambda$. The simple quotient of 
${\Delta}(\lambda)$ is denoted by $L(\lambda)$, and the indecomposable 
projective cover of $L(\lambda)$ in $\mathcal{O}$ is denoted by $P(\lambda)$.

Let $W$ denote the Weyl group of $\mathfrak{g}$ which acts on $\mathfrak{h}^*$
in the usual way. The above triangular decomposition leads to a decomposition
of the root system of $\mathfrak{g}$ into positive and negative roots and
we denote by $\rho$ the half of the sum of all positive roots. Then the {\em dot action}
of $W$ on $\mathfrak{h}^*$ is given by $w \cdot \lambda = w(\lambda + \rho) - \rho$.

Category $\mathcal{O}$ decomposes into {\em blocks} with respect to the action of the
center of the universal enveloping algebra of $\mathfrak{g}$. 
For $\lambda\in\mathfrak{h}^*$, we denote by ${\mathcal{O}}_\lambda$
the block which corresponds to the central character of the Verma module $\Delta(\lambda)$.
Thanks to Soergel's combinatorial description of blocks of $\mathcal{O}$ from \cite{So2}, 
without loss of generality we may work with {\em integral} weights.

Let $\{\alpha_1, \ldots, \alpha_n \}$ be a fixed (standard) ordering of simple roots.
The simple reflection in $W$ corresponding to $\alpha_i$ is denoted $s_{\alpha_i} = s_i$.

A weight $\lambda$ is dominant if $\langle \lambda + \rho, \alpha \rangle \geq 0$ for all simple roots $\alpha$. It is regular if $\langle \lambda + \rho, \alpha \rangle \neq 0$ for all simple roots $\alpha$, otherwise it is singular.

For a subset $S\subset \{\alpha_1, \ldots, \alpha_n \}$, we have the corresponding
parabolic subcategory $\mathcal{O}^S$ in $\mathcal{O}$ as defined in \cite{rocha-caridi1980splitting}.
If $\lambda\in\mathfrak{h}^*$ is dominant and such that its stabilizer in $W$ 
with respect to the dot-action is the
parabolic subgroup generated by simple reflections from $S$, we will also use the
notation ${\mathcal{O}}^\lambda:=\mathcal{O}^S$. The generalized Verma module 
in ${\mathcal{O}}^\lambda$ with highest weight $\mu$ is denoted ${\Delta}^{\lambda}(\mu)$
and its indecomposable projective cover in  ${\mathcal{O}}^\lambda$ is denoted $P^{\lambda}(\mu)$.

We refer to \cite{Hu} for details.
\vspace{1cm}

\section{Combinatorics of the Weyl group}\label{s3}

\subsection{Conventions and preliminaries}\label{s3.1}

Consider the usual length function $l$ and the Bruhat order $<$ on $W$. 
Put $u \to v$ if $u<v$ and $l(v)=l(u)+1$. With respect to Bruhat order,
$W$ is a {\em graded poset} with degree given by the length function,  
in particular, $u \leq v$ if and only if there  is a path 
\begin{equation}\label{eq1}
u=x_0 \to x_1\to \ldots \to x_k= v. 
\end{equation}
Recall that $u \leq v$ also means that some (equivalently, any) reduced 
expression for $v$ contains a reduced expression for $u$ as a subword. 
Also recall that, for a simple reflection $s$, $sv<v$ if and only if $v$ 
has a reduced expression that starts with $s$, moreover, in the latter case 
$sv$ does not have such an expression. This will be often used without mentioning. 
For details,  see \cite[Chapter~2]{bjorner2006combinatorics}.

Fix a dominant, but possibly singular weight $\lambda$, and set $S$ to be the set of 
simple roots $\alpha$ for which $\langle \lambda+\rho,\alpha \rangle = 0$. 
Elements of $S$ are called \emph{singular roots}, and reflections $s_\alpha$, 
for $\alpha \in S$, are called \emph{singular reflections}. Singular 
reflections generate a (parabolic) subgroup $W_\lambda \leq W$, which is precisely the stabilizer of $\lambda$ with respect to the dot action. Denote by 
$W^\lambda$ the set of {\em minimal length} representatives of the cosets 
$W/W_\lambda$. It is a graded subposet of $W$. For $u, v \in W^\lambda$, 
we have $u \leq v$ if and only if there is a path \eqref{eq1} 
completely contained in $W^\lambda$. Recall the following standard fact.

\begin{lemma}[Kostant, {\cite[Proposition 5.13.]{kostant1961lie}}]\label{lem1}
Any $v \in W$ can be uniquely decomposed as $v = v^\lambda v_\lambda$, 
where $v^\lambda\in W^\lambda$ and $v_\lambda \in W_\lambda$. 
Moreover, $l(v)=l(v^\lambda)+l(v_\lambda)$.
\end{lemma}

Denote by $w_0$ the longest element in $W$, and decompose it via Kostant's 
lemma: $w_0 = (w_0)^\lambda (w_0)_\lambda$. Then $(w_0)_\lambda$ is the 
longest element in $W_\lambda$, and we denote it, as usual, by $w_0^\lambda$. 
The set $W^\lambda w_0^\lambda$ is precisely the set of the {\em longest 
representatives} of the cosets $W/W_\lambda$. Since it will be frequently 
used, we reserve a special notation $\tilde{W}^\lambda := W^\lambda w_0^\lambda$ for it.

\subsection{Intersection of intervals and cosets}\label{s3.2}

Here we prove the main auxiliary combinatorial statements 
that will be used in the next section.

\begin{lemma}\label{lemma:cosets_iso}
Assume $x \in W^\lambda$ and $s$ is a simple reflection.
\begin{enumerate}[$($a$)$]
\item\label{lemma:cosets_iso.1} If $sx \not \in xW_\lambda$, 
then $sx \in W^\lambda$ and the multiplication by $s$ from 
the left gives a directed graph isomorphism 
$x W_\lambda \leftrightarrow sx W_\lambda$.
\item\label{lemma:cosets_iso.2} If $sx \in xW_\lambda$, then 
$sz \in xW_\lambda$, for all $z \in xW_\lambda$.
\end{enumerate}
\end{lemma}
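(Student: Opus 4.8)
The plan is to dispose of part (b) first, since it is purely group-theoretic, and then to spend the real effort on part (a). For part (b), observe that $sx\in xW_\lambda$ means precisely that the reflection $r:=x^{-1}sx$ lies in $W_\lambda$. Then for an arbitrary $z=xa$ with $a\in W_\lambda$ one simply rewrites $sz=sxa=x(x^{-1}sx)a=x(ra)$, and since $ra\in W_\lambda$ this gives $sz\in xW_\lambda$. No length considerations enter here.

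For part (a), I would first establish the membership $sx\in W^\lambda$ and only afterwards the graph isomorphism. To prove $sx\in W^\lambda$, suppose not; then $sx$ has a reduced expression ending in a singular reflection, i.e.\ there is $t\in S$ with $sxt<sx$. Set $u:=x$ and $v:=xt$. Since $x\in W^\lambda$ we have $xt>x$, so $u\to v$. Comparing lengths through the two factorizations $sxt=(sx)t=s(xt)$ shows, in either case $sx>x$ or $sx<x$, that $s(xt)<xt$, i.e.\ $s$ is a left descent of $v$; this is exactly the hypothesis needed to apply the lifting property \cite[Proposition~2.2.7]{bjorner2006combinatorics} to the pair $u<v$. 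If $sx>x$ (so $su>u$) the lifting property yields $u\le sv$; but $l(sv)=l(sxt)=l(x)=l(u)$, forcing $u=sv$, that is $sx=xt\in xW_\lambda$, which contradicts the standing hypothesis of part (a). If $sx<x$ (so $su<u$, and $s$ is a common left descent of $u$ and $v$) the lifting property gives $su\le sv$, which is absurd since $l(su)=l(sx)=l(x)-1>l(x)-2=l(sxt)=l(sv)$. Either way we reach a contradiction, so $sx\in W^\lambda$. I expect this diamond argument, namely recognizing that the offending $t$ produces the configuration $u\to v$ with $su>u$ and $sv<v$ and that the lifting property collapses it to $sx=xt$, to be the main obstacle; it is also the only place where the assumption $sx\notin xW_\lambda$ is genuinely used.

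Once $sx\in W^\lambda$ is known, the graph isomorphism is formal. By Kostant's decomposition (Lemma~\ref{lem1}) we have $l(ya)=l(y)+l(a)$ for any $y\in W^\lambda$ and $a\in W_\lambda$, together with the standard parabolic fact \cite[Section~2.4]{bjorner2006combinatorics} that $a\mapsto ya$ is an isomorphism of directed graphs from $W_\lambda$, equipped with its own Bruhat covering relations, onto the coset $yW_\lambda$. Applying this to both $y=x$ and $y=sx$ gives directed graph isomorphisms $W_\lambda\xrightarrow{\sim}xW_\lambda$, $a\mapsto xa$, and $W_\lambda\xrightarrow{\sim}sxW_\lambda$, $a\mapsto sxa$. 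Since left multiplication by $s$ sends $xa$ to $sxa$, it is exactly the second isomorphism composed with the inverse of the first, hence itself a directed graph isomorphism $xW_\lambda\leftrightarrow sxW_\lambda$; concretely it shifts all lengths by the constant $l(sx)-l(x)=\pm1$ and preserves Bruhat order in both directions, so it carries covers to covers. This completes the plan.
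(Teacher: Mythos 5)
Your proof is correct, but for the heart of part (a) --- the membership $sx\in W^\lambda$ --- you take a genuinely different route from the paper. The paper argues directly via Kostant's decomposition (Lemma~\ref{lem1}): writing $sx=v^\lambda v_\lambda$, it shows $v_\lambda=e$, in the descent case because otherwise $x=sv^\lambda v_\lambda$ would have a reduced word ending in a singular reflection, and in the ascent case by combining the subword property (the hypothesis $sx\notin xW_\lambda$ enters here, to force $x<v^\lambda$ strictly) with the length count $l(x)<l(v^\lambda)\leq l(sx)=l(x)+1$. You instead argue by contradiction with the lifting property \cite[Proposition~2.2.7]{bjorner2006combinatorics}, applied to the cover $x\to xt$ where $t$ is a hypothetical singular right descent of $sx$. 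Both arguments are valid elementary Coxeter combinatorics; the paper's is shorter and constructive, while yours unifies the two cases into one configuration and makes visible that the hypothesis $sx\notin xW_\lambda$ is only genuinely used in the ascent case $sx>x$. A small redundancy: in your descent case the lifting property is not needed at all, since $l(sxt)$ computed through $(sx)t$ equals $l(x)-2$ while computed through $s(xt)$ it must be at least $l(xt)-1=l(x)$, an immediate contradiction. For the graph isomorphism and part (b) you essentially reproduce the paper's content: the coset fact you cite loosely from \cite[Section~2.4]{bjorner2006combinatorics} is exactly what the paper compresses into ``follows directly from Kostant's lemma'' (covers inside $yW_\lambda$ correspond to covers in $W_\lambda$ by length additivity plus the reflection characterization of covers), and your conjugation argument for (b) is the paper's, minus the unneeded precision that the conjugating element is a simple singular reflection.
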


\begin{proof}
Let us start with claim~\eqref{lemma:cosets_iso.1}.
Suppose first that $sx<x$. To see that $sx \in W^\lambda$, 
decompose $sx$ as $v^\lambda v_\lambda$ according to Kostant's 
lemma. Then $x=s v^\lambda v_\lambda$, without any cancellation. 
If $v_\lambda$ were non-trivial, this would give a reduced 
expression for $x$ ending in a singular reflection,  which 
contradicts $x \in W^\lambda$. So $v_\lambda=e$, 
and $sx=v^\lambda \in W^\lambda$.

If $x < sx$ and $sx \not \in xW_\lambda$, decompose again 
$sx = v^\lambda v_\lambda$ according to Kostant's lemma. 
Then $x < v^\lambda v_\lambda$, but since $x$ cannot have 
reduced expression ending with a singular root, we must have 
$x < v^\lambda$. We have
\begin{displaymath}
l(x) < l(v^\lambda) \leq l(v^\lambda) + l(v_\lambda) = l(sx) = l(x)+1,  
\end{displaymath}
so $v_\lambda=e$, and $sx = v^\lambda \in W^\lambda$.
Claim~\eqref{lemma:cosets_iso.1} now follows directly from Kostant's lemma.

We proceed with claim~\eqref{lemma:cosets_iso.2}.
If $sx \in xW_\lambda$, then, by Kostant's lemma, $sx=xt$, for some simple singular reflection $t$. Any $z \in xW_\lambda$ has the form $z=xu$ for some $u \in W_\lambda$, so $sz = sxu = xtu \in x W_\lambda$. This completes the proof.
\end{proof}

The next statement was suggested to us by Axel Hultman who also provided
some hints about the proof.

\begin{lemma}\label{lemma:intersections_minimal}
Assume that $x \in W^\lambda$ and $w \in W$ are such that $x \leq w$. 
Then the intersection $[e,w] \cap xW_\lambda$ has a unique maximal element.
\end{lemma}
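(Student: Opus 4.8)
The plan is to prove the equivalent statement that the set $D:=\{u\in W_\lambda : xu\le w\}$ has a unique maximal element, by induction on $l(w)$; the two formulations are interchanged by the map $u\mapsto xu$, which is an isomorphism from $(W_\lambda,\le)$ onto the coset $xW_\lambda$ equipped with the induced Bruhat order (a standard consequence of Kostant's lemma and the subword property). Note that $D$ is nonempty, since $x\le w$ gives $e\in D$, and that $D$ is a lower order ideal in $W_\lambda$: if $u\le u'$ and $xu'\le w$, then $xu\le xu'\le w$. The base case $l(w)=0$ is immediate. For the inductive step I fix a simple reflection $s$ with $sw<w$ and use the subword decomposition $[e,w]=[e,sw]\cup s\cdot[e,sw]$, that is, $z\le w$ if and only if $z\le sw$ or $sz\le sw$. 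The argument then splits according to Lemma~\ref{lemma:cosets_iso}, depending on whether $sx\in xW_\lambda$.

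Suppose first that $sx\notin xW_\lambda$, so $sx\in W^\lambda$ by Lemma~\ref{lemma:cosets_iso}\eqref{lemma:cosets_iso.1}. If $sx<x$, then $l((sx)u)=l(xu)-1$ for every $u\in W_\lambda$, and the subword decomposition collapses to $xu\le w\iff (sx)u\le sw$; hence $D=\{u:(sx)u\le sw\}$, and since $sx\le sw$ by the lifting property and $l(sw)<l(w)$, the induction hypothesis applies verbatim to the pair $(sx,sw)$. If $sx>x$, then $xu\le (sx)u$ for all $u$, so any $u$ with $(sx)u\le sw$ already satisfies $xu\le sw$; thus the ``$s$-part'' of the decomposition contributes nothing new and $D=\{u:xu\le sw\}$, to which induction again applies (here $x\le sw$ by the lifting property, using $x<w$). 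In both subcases $D$ equals a set of the same type attached to the shorter element $sw$, and we are done.

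The remaining, and main, case is $sx\in xW_\lambda$. By Lemma~\ref{lemma:cosets_iso}\eqref{lemma:cosets_iso.2} (and its proof) we have $sx=xt$ for a simple reflection $t\in W_\lambda$, so $s\cdot xu=xtu$ and the subword decomposition reads $u\in D\iff u\in D'$ or $tu\in D'$, where $D'=\{u:xu\le sw\}$. Since $x\le sw$ (again by the lifting property, using $x<w$) and $l(sw)<l(w)$, induction gives a unique maximal element $u'$ of $D'$, and $D=D'\cup tD'$. The crux is to show that such a union has a unique maximal element, and I expect this to be the only real obstacle. I would argue entirely inside the Coxeter group $W_\lambda$: invoking the lifting property (property Z) for the simple reflection $t$ together with the decomposition $[e,tu']=[e,u']\cup t\cdot[e,u']$, one checks that every $u\in D$ lies below $\max(u',tu')$, the two elements $u'$ and $tu'$ being Bruhat-comparable since they differ by $t$. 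One distinguishes the cases $tu'>u'$ and $tu'<u'$, and within each the cases $tu>u$ and $tu<u$; each reduces to an application of the two displayed Bruhat identities. This identifies $\max(u',tu')$ as the unique maximal element of $D$, completing the induction; translated back, the maximal element of $[e,w]\cap xW_\lambda$ is the longer of $m'$ and $sm'$, where $m'=xu'$. The point to emphasize is that this last step genuinely uses the interval structure of $[e,w]$: a general lower order ideal of $(W_\lambda,\le)$ need not have a unique maximal element, since Bruhat order is not a lattice.
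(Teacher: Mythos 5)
Your proof is correct and follows essentially the same route as the paper's: induction on $l(w)$ via a left descent $s$ of $w$, the same three-way case analysis according to whether $sx \notin xW_\lambda$ with $sx<x$, $sx \notin xW_\lambda$ with $sx>x$, or $sx \in xW_\lambda$, and in the hard case the same candidate maximum $\max(u',tu')$ checked by the lifting property (this is exactly the paper's $y=\max\{y'',sy''\}$ in its Subcase~2b). The only difference is presentational: you transport the problem into $W_\lambda$ via $u \mapsto xu$ (for which, note, only order-preservation of this map is actually needed, since a unique maximal element of a finite poset is a greatest element, and order-preservation is immediate from Kostant's lemma and the subword property), whereas the paper works directly with the cosets using Lemma~\ref{lemma:cosets_iso}.
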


\begin{proof}
We prove this by induction on $l(w)$. The basis of the induction 
$w=x=e$ is trivial. Let $w$ and $x$ be as in the statement, 
and choose a simple reflection $s$ such that $sw < w$. Recall 
that in this case $w$ has a reduced expression starting 
with $s$, that is $w=s s_2 \ldots s_{l(w)}$, and thus we also have
$sw=s_2 \ldots s_{l(w)}$.

By induction, we can do the following:
\begin{itemize}
\item If $sx \leq sw$, we set $y'$ to be the unique maximal 
element in $[e,sw] \cap sxW_\lambda$.
\item If $x \leq sw$, we set $y''$ to be the unique maximal 
element in $[e,sw] \cap xW_\lambda$.
\end{itemize}
Using $s$, $y'$ and $y''$, we will construct the unique maximal element $y$ in $[e,w] \cap xW_\lambda$. We have to distinguish between several cases.

{\bf Case~1.} Assume that $sx < x$. Obviously, $sx \not \in xW_\lambda$, so,
by Lemma~\ref{lemma:cosets_iso}, we have $sx \in W^\lambda$ and
thus there is a directed graph isomorphism $sx W_\lambda \to x W_\lambda$.

From $sx < x \leq w$, it follows that $sx \leq sw$ and thus $y'$ exists. 
We have $sy' \in xW_\lambda$ and, since $y' \leq sw$, we have $sy' \leq w$. 
So $y:=sy'$ is in the intersection $[e,w] \cap xW_\lambda$. 
We want to see that $y$ is the unique maximum element in this intersection.

Take any $z$ in $[e,w] \cap xW_\lambda$. Then $sz \in sx W_\lambda$ and,
since $sz<z \leq w$, we have $sz \leq sw$. By the inductive assumption, 
$sz \leq y'$ and therefore $z \leq sy'=y$. This completes Case~1.

{\bf Case~2.} Assume that $x < sx$. No reduced expression of $x$ can start with $s$, 
so, in this case, we have $x \leq sw$. Therefore, $y''$ exists by the 
inductive assumption. Note that $y'' \leq w$.

{\bf Subcase~2a.} Assume that $sx \not \in xW_\lambda$. By 
Lemma~\ref{lemma:cosets_iso}, we have a directed graph isomorphism 
$x W_\lambda \to sx W_\lambda$. In particular, no element in 
$x W_\lambda$ can have a reduced expression that starts with $s$. 
Consequently, if $z \in xW_\lambda$ with $z \leq w$, then $z \leq sw$. 
Therefore $z \leq y''$, and thus $y:=y''$ is the element we are looking for.

{\bf Subcase~2b.} Assume that $sx \in xW_\lambda$. By 
Lemma~\ref{lemma:cosets_iso}, we have $sy'' \in xW_\lambda$. Note that 
$sy'' \leq w$. In this case we define $y := \max \{y'',sy''\}$. 
Take $z \in xW_\lambda$ with $z \leq w$. Then also $sz \in xW_\lambda$. 
We either have $z \leq sw$ or $sz \leq sw$. By induction, we either  have
$z \leq y'' \leq y$ or $sz \leq y''$. In the former case we are done. 
In the latter case there are four possibilities:
\begin{align*}
&y'' < sy'' \text{ and } z < sz, &  &y'' < sy'' \text{ and } sz < z,  \\
&sy'' < y'' \text{ and }z < sz,        &   \text{or } \ & sy'' < y'' \text{ and } sz < z. 
\end{align*}
For each of them, it is straightforward to check  that $z \leq y$.
\end{proof}

\begin{remark}
The special case $x=e$ of the previous lemma is proved in 
\cite[Lemma~7]{hombergh1974about}.
\end{remark}

\begin{lemma}
\label{lemma:intersections_maximal}
Assume that $x \in W^\lambda$ and $w \in W$ are such that 
$w \leq x$. Then the intersection $[w,w_0] \cap xW_\lambda$ has a 
unique minimal element. Moreover, the intersection $[w,w_0] \cap xW_\lambda$
is isomorphic, as a directed graph, to the interval $[y,w_0^\lambda]$ in $W_\lambda$. 
\end{lemma}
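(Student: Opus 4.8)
The plan is to deduce the first assertion from Lemma~\ref{lemma:intersections_minimal} by applying the order-reversing bijection of $W$ given by left multiplication with $w_0$, and then to obtain the ``moreover'' part from the fact that left multiplication with a minimal coset representative is a directed graph isomorphism between $W_\lambda$ and the coset.

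Concretely, I would consider the map $\iota\colon W \to W$, $\iota(v) = w_0 v$. Since $u \leq v$ if and only if $w_0 v \leq w_0 u$, this $\iota$ is an involutive, order-reversing bijection; it sends the interval $[w, w_0]$ onto $[e, w_0 w]$ and sends the coset $x W_\lambda$ onto the coset $w_0 x W_\lambda$. Writing $x' := (w_0 x)^\lambda \in W^\lambda$ for the minimal-length representative supplied by Lemma~\ref{lem1}, we have $w_0 x W_\lambda = x' W_\lambda$, so that
\begin{displaymath}
\iota\bigl([w,w_0] \cap x W_\lambda\bigr) = [e, w_0 w] \cap x' W_\lambda .
\end{displaymath}
As $\iota$ reverses the order, the unique minimal element of the left-hand side corresponds to the unique maximal element of the right-hand side. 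To invoke Lemma~\ref{lemma:intersections_minimal} I must check its hypothesis $x' \leq w_0 w$: from $w \leq x$ and order-reversal I get $w_0 x \leq w_0 w$, while $x' \leq w_0 x$ holds because a minimal coset representative lies below every element of its coset (subword property, Lemma~\ref{lem1}). Hence $x' \leq w_0 w$, Lemma~\ref{lemma:intersections_minimal} applies, and the existence and uniqueness of the minimal element follow.

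For the second assertion I would use the directed graph isomorphism $\varphi\colon W_\lambda \to x W_\lambda$, $u \mapsto xu$, the standard fact that left translation by $x \in W^\lambda$ is a Bruhat-order isomorphism onto the coset (with $l(xu) = l(x) + l(u)$ by Lemma~\ref{lem1}). Since every element of $W$ lies in $[e,w_0]$, the intersection in question is just $\{z \in x W_\lambda : z \geq w\}$, whose $\varphi$-preimage is $\{u \in W_\lambda : xu \geq w\}$. Writing the unique minimal element from the first part as $xy$ with $y \in W_\lambda$, I claim this preimage equals $[y, w_0^\lambda]$. Indeed it is up-closed in $W_\lambda$: if $xu \geq w$ and $u' \geq u$ then $xu' \geq xu \geq w$ by monotonicity of $\varphi$; being up-closed with minimum $y$ and with $w_0^\lambda$ as the top of $W_\lambda$, it coincides with $[y, w_0^\lambda]$. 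Restricting $\varphi$ then yields the claimed isomorphism.

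The point requiring most care is the reduction step itself: choosing left (rather than right) multiplication by $w_0$, so that a coset is carried to a coset, and verifying the hypothesis $x' \leq w_0 w$ of Lemma~\ref{lemma:intersections_minimal}. The second assertion is then essentially bookkeeping, its only substantive input being that $\varphi$ is order-preserving, so that the fibre over $w$ is up-closed, together with the minimal element already produced by the first part.
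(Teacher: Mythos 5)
Your proof is correct and takes essentially the same route as the paper: the paper likewise deduces the first claim from Lemma~\ref{lemma:intersections_minimal} by left multiplication with $w_0$, and gets the second claim from Kostant's lemma, taking $y$ to be the $W_\lambda$-component of the unique minimum of the intersection. You merely spell out the details the paper leaves implicit, namely the hypothesis check $x' \leq w_0 w$ and the standard fact that $u \mapsto xu$ is a Bruhat-order (hence directed-graph) isomorphism of $W_\lambda$ onto $xW_\lambda$.
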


\begin{proof}
The first claim follows from Lemma~\ref{lemma:intersections_minimal} 
by multiplication with $w_0$. The second claim follows from Kostant's lemma. 
Indeed, $y$ can be taken as the $W_\lambda$-component of the unique 
minimum from the intersection.
\end{proof}

\begin{lemma}\label{lemma:partition_of_intersection}
Assume that $x \in W^\lambda$ and $w \in W$ are such that $w \leq x$.
Assume further that the intersection $[w,w_0] \cap xW_\lambda$ is not 
a singleton. Then there is a partition ${\mathcal{P}}$ of 
$[w,w_0] \cap xW_\lambda$ consisting of $2$-subsets such that:
\begin{enumerate}[$($a$)$]
\item \label{item:partition1}
For any $\{z,z' \} \in {\mathcal{P}}$, there is an arrow $z \to z'$ or $z' \to z$.
\item \label{item:partition2}
If both $\{z \to z' \}$ and $\{t \to t' \}$ are in ${\mathcal{P}}$ 
and $l(z) = l(t)$ but $z \neq t$, then $z \not\leq t'$.
\end{enumerate}
\end{lemma}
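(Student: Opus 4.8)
The plan is to reduce the whole statement to a question about a single interval inside the parabolic subgroup $W_\lambda$, and then to produce the required partition by left multiplication by one well-chosen simple reflection. First I would invoke Lemma~\ref{lemma:intersections_maximal}, which provides a directed graph isomorphism between $[w,w_0]\cap xW_\lambda$ and the interval $[y,w_0^\lambda]$ inside $W_\lambda$ (the map being, via Lemma~\ref{lem1}, the passage to the $W_\lambda$-component). Since such an isomorphism preserves arrows, preserves lengths up to a global shift, and preserves comparabilities, and since properties \eqref{item:partition1} and \eqref{item:partition2} are phrased purely in terms of arrows, equalities of lengths, and the order relation, it suffices to construct the partition on $[y,w_0^\lambda]$ and transport it back. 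Because $w_0^\lambda$ is the longest element of $W_\lambda$, the interval $[y,w_0^\lambda]$ is nothing but the principal filter $F:=\{u\in W_\lambda : u\ge y\}$, and the hypothesis that the intersection is not a singleton translates exactly into $y\neq w_0^\lambda$.

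Next I would choose a simple reflection $s$ of $W_\lambda$ with $sy>y$; such $s$ exists precisely because $y$, not being the longest element, cannot have every simple reflection as a left descent. I then claim that $F$ is closed under the fixed-point-free involution $u\mapsto su$, so that $\mathcal{P}:=\{\{u,su\}: u\in F\}$ is the sought partition into $2$-subsets. Indeed, for $u\ge y$ with $su>u$ one trivially has $su>u\ge y$; and for $su<u$ one first notes $u>y$ strictly (equality would force $su=sy<y=u<sy$), and then the lifting property of the Bruhat order (see \cite[Chapter~2]{bjorner2006combinatorics}), applied to $y<u$ together with $su<u$ and $y<sy$, yields $y\le su$. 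Since $u$ and $su$ always differ by a single simple reflection on the left, they differ in length by one and are joined by an arrow, which gives property \eqref{item:partition1}. In each pair I write $z\to z'$ with $z$ the shorter element, so that $sz>z$ and $z'=sz$.

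Finally, for property \eqref{item:partition2} I would take two pairs $z\to z'=sz$ and $t\to t'=st$ with $l(z)=l(t)$ and $z\neq t$, and argue by contradiction assuming $z\le t'=st$. Since $st>t$ gives $l(st)=l(t)+1=l(z)+1$, the relation $z\le st$ is then a covering relation $z\to st$. Now $st$ has $s$ as a left descent, because $s\cdot st=t<st$, whereas $sz>z$; hence the lifting property applied to $z<st$ produces $z\le s(st)=t$. Combined with $l(z)=l(t)$ this forces $z=t$, contradicting $z\neq t$, so $z\not\le t'$ as required.

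The main obstacle I anticipate is not the construction of the matching, which is essentially forced once one works inside $W_\lambda$, but rather making a single involution $u\mapsto su$ deliver both \eqref{item:partition1} and \eqref{item:partition2} at once. This is exactly where the choice $sy>y$ and two separate applications of the lifting property carry the weight: one application keeps the filter $F$ invariant (so that the matching is well defined), and the other rules out the forbidden comparability in \eqref{item:partition2}. A secondary point to verify carefully is that the reduction through Lemma~\ref{lemma:intersections_maximal} genuinely transports the two conditions, which is unproblematic since the isomorphism is graded and arrow-preserving.
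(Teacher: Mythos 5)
Your proof is correct and follows essentially the same route as the paper's: reduce via Lemma~\ref{lemma:intersections_maximal} to an interval $[y,w_0^\lambda]$ in $W_\lambda$, pick a simple reflection $s$ with $sy>y$, and take the matching $\{z,sz\}$. The only cosmetic difference is that you invoke the lifting property of the Bruhat order where the paper cites the subword property (and you are somewhat more explicit about the closure of the filter under $u\mapsto su$), but the construction and both verification steps are identical.
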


\begin{proof}
Because of the second claim in Lemma~\ref{lemma:intersections_maximal}, 
it is enough to prove the statement for any interval $[y,w_0] \subseteq W$, 
where $y < w_0$.

Take any simple reflection $s$ such that $y<sy$. The existence of such 
$s$ is guaranteed by \cite[Proposition~2.3.1]{bjorner2006combinatorics}. 
We claim that the wanted partition can be given by $\{z,sz\}$, 
where $z \in [y,w_0]$.

To see that this partition has property~\eqref{item:partition1} we need
to check the following: if $y < z$ and $sz<z$, then 
$y \leq sz$. But this follows directly from the subword property.

To see that this partition has property~\eqref{item:partition2}, 
suppose $z < sz$, $t < st$, and  $z \leq st$. Then, by the subword property, 
we have $z \leq t$. But, if, in addition, we suppose that $l(z)=l(t)$, 
we obtain $z=t$. The claim follows. 
\end{proof}

\subsection{M\"obius function}\label{s3.3}

Recall that each locally finite partially ordered set has 
its M\"obius function $\mu$, defined as the inverse, in the 
incidence algebra, of the defining $\zeta$-function of the poset. 
For more  information, see \cite[Chapter~3]{stanley2011enumerative}. 
Concretely, the function $\mu$ can be defined, for pairs $w \leq x$, recursively:
\begin{displaymath}
\mu(w,x) = \begin{cases} 1, & \text{if } w=x;  \\ \displaystyle
-\sum_{w \leq z < x} \mu(w,z), & \text{if } w<x.   \end{cases} 
\end{displaymath}
We emphasize that the value $\mu(w,x)$ depends only on the interval $[w,x]$.

The main result of \cite{verma1971mobius} describes the M\"obius function 
for the Bruhat order on Weyl groups. We will need the following generalization 
from \cite[Section~2.7]{bjorner2006combinatorics}, which describes the
M\"obius function for the restriction of the Bruhat order on 
$W^\lambda$, or, equivalently, on $\tilde{W}^\lambda$.

\begin{proposition}\label{proposition:mu_Wlambda}
For $w,x \in \tilde{W}^\lambda$ with $w \leq x$, we have
\begin{displaymath}
\mu^\lambda(w,x) = \begin{cases} 0, &  \text{if there exists } 
z \notin \tilde{W}^\lambda \text{ such that } w < z < x; \\
(-1)^{l(x)-l(w)}, &  \text{otherwise}.   \end{cases}  
\end{displaymath}
In other words, $\mu^\lambda(w,x) =0$ if and only if there 
exists a directed path $w \to \ldots \to x$ that exits $\tilde{W}^\lambda$.
\end{proposition}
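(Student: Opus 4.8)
The plan is to reduce the computation of $\mu^\lambda$ on $\tilde{W}^\lambda$ to the Möbius function of the full Bruhat order on $W$, which is known by Verma's theorem \cite{verma1971mobius} to satisfy $\mu(u,v)=(-1)^{l(v)-l(u)}$ for all $u\le v$. The bridge is the projection $\pi\colon W\to\tilde{W}^\lambda$ sending $v$ to the longest element of its coset $vW_\lambda$; concretely $\pi(v)=v^\lambda w_0^\lambda$ in the notation of Lemma~\ref{lem1}. First I would record that $\pi$ is a closure operator: it is extensive ($v\le\pi(v)$, since the longest representative is the Bruhat-maximum of the coset), idempotent, and order-preserving (the projection onto minimal, equivalently maximal, coset representatives is monotone, see \cite[Section~2.5]{bjorner2006combinatorics}). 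In particular, for $w,x\in\tilde{W}^\lambda$ the operator $\pi$ maps the interval $[w,w_0]$ into itself with image $[w,w_0]\cap\tilde{W}^\lambda$, and $w$ is fixed by $\pi$.

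Second, I would prove the key identity
\begin{equation}\label{eq:mobcoset}
\mu^\lambda(w,x)\;=\;\sum_{\substack{v\in xW_\lambda \\ v\ge w}}\mu(w,v),
\end{equation}
valid for $w\le x$ in $\tilde{W}^\lambda$. This is a formal consequence of $\pi$ being a closure operator: denoting the right-hand side by $N(w,x)$ and summing over $x\in\tilde{W}^\lambda$ with $w\le x\le s$, the fibres of $\pi$ regroup the terms so that, using $\pi(v)\le s\Leftrightarrow v\le s$, the total collapses to $\sum_{w\le v\le s}\mu(w,v)=\delta_{w,s}$; this is exactly the recursion characterizing $\mu^\lambda(w,\cdot)$, so $N(w,\cdot)=\mu^\lambda(w,\cdot)$. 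This step uses nothing about $W$ beyond the definition of the Möbius function and the closure property.

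Third, I would evaluate the right-hand side of \eqref{eq:mobcoset}. By Verma's theorem each term equals $(-1)^{l(v)-l(w)}$, so
\begin{displaymath}
\mu^\lambda(w,x)=\sum_{v\in[w,w_0]\cap xW_\lambda}(-1)^{l(v)-l(w)}.
\end{displaymath}
Here the set $[w,w_0]\cap xW_\lambda$ is precisely the intersection analysed in Lemma~\ref{lemma:intersections_maximal} and Lemma~\ref{lemma:partition_of_intersection}. If it is not a singleton, Lemma~\ref{lemma:partition_of_intersection} partitions it into pairs $\{z,z'\}$ joined by an arrow, so $l(z)$ and $l(z')$ differ by one and the two signs cancel; hence the whole sum vanishes. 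If it is a singleton, then, since $x$ itself always lies in the intersection, the unique element is $x$ and the sum equals $(-1)^{l(x)-l(w)}$.

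Finally, I would match the dichotomy ``singleton versus not'' with the condition in the statement. The clean reformulation is that $[w,w_0]\cap xW_\lambda=\{x\}$ if and only if $w\not\le xs$ for every singular reflection $s$: every representative other than $x$ lies below some coatom $xs$ with $s\in S$ (recall $xs<x$ since $x\in\tilde{W}^\lambda$), so some representative other than $x$ exceeds $w$ exactly when some $xs$ does. The step I expect to be the main obstacle is identifying this, in turn, with the existence of $z\notin\tilde{W}^\lambda$ satisfying $w<z<x$. One direction is immediate: if $w\le xs$ for some singular $s$, then $xs$ is such a $z$, because $xs\in xW_\lambda\setminus\{x\}$ is not the longest element of its coset, hence $xs\notin\tilde{W}^\lambda$, and $w<xs<x$. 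For the converse I would invoke the lifting property of the Bruhat order \cite[Proposition~2.2.7]{bjorner2006combinatorics}: given $z\notin\tilde{W}^\lambda$ with $w<z<x$, choose a singular $s$ with $zs>z$; since $xs<x$ as well, the lifting property yields $z\le xs$, whence $w\le xs$. Assembling the four steps gives the formula, and the reformulation ``in other words'' follows at once because $W$ is graded, so an intermediate $z\notin\tilde{W}^\lambda$ exists precisely when some saturated path from $w$ to $x$ leaves $\tilde{W}^\lambda$.
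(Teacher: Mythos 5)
Your proof is correct, and it takes a genuinely different route from the paper: the paper does not prove Proposition~\ref{proposition:mu_Wlambda} at all, but simply quotes it from \cite[Section~2.7]{bjorner2006combinatorics} (Deodhar's formula for the M\"obius function of a parabolic quotient). You instead derive it from Verma's theorem \cite{verma1971mobius} for the full Bruhat order, via three sound steps: the projection $\pi(v)=v^\lambda w_0^\lambda$ onto longest coset representatives is a closure operator (extensive since the longest representative is the Bruhat maximum of its coset, monotone by \cite[Proposition~2.5.1]{bjorner2006combinatorics} transported through the isomorphism $y\mapsto yw_0^\lambda$); the regrouping identity $\mu^\lambda(w,x)=\sum_{v\in xW_\lambda,\,v\ge w}\mu(w,v)$, whose verification through the recursion $\sum_{x\in\tilde{W}^\lambda,\,w\le x\le s}(\cdot)=\delta_{w,s}$ is complete (the key points being $\pi(w)=w$ and $\pi(v)\le s\Leftrightarrow v\le s$ for closed $s$); and the evaluation of the coset sum by pairwise sign cancellation via Lemma~\ref{lemma:partition_of_intersection}, with the singleton dichotomy translated into the existence of an intermediate $z\notin\tilde{W}^\lambda$ using the coatoms $xs$, $s\in S$, and the right-handed lifting property. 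What the paper's route buys is brevity; what yours buys is self-containedness (only Subsection~\ref{s3.2} machinery plus Verma's formula and the standard closure-operator lemma from \cite[Chapter~3]{stanley2011enumerative}), and, as a pleasant by-product, Lemma~\ref{lemma:mu_singleton} comes for free, since the equivalence between $\mu^\lambda(w,x)\neq 0$ and $[w,w_0]\cap xW_\lambda$ being a singleton is exactly your intermediate dichotomy, whereas the paper gives it a separate proof. Two minor points: you invoke Lemma~\ref{lemma:partition_of_intersection} (via Lemma~\ref{lemma:intersections_maximal}) with $x\in\tilde{W}^\lambda$ and $w\le x$, whereas those lemmas are literally stated with $x\in W^\lambda$; this is an infelicity of the paper rather than a gap of yours, since their proofs (by $w_0$-translation of Lemma~\ref{lemma:intersections_minimal}) only need the intersection to be non-empty, and the paper itself uses them in exactly this form in Subsection~\ref{s4.3}. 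Also, the existence of a singular simple $s$ with $zs>z$ deserves its one-line justification: otherwise every $s\in S$ would be a right descent of $z$, forcing the $W_\lambda$-component of $z$ to equal $w_0^\lambda$, contradicting $z\notin\tilde{W}^\lambda$.
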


\begin{lemma}\label{lemma:mu_singleton}
For $w,x \in \tilde{W}^\lambda$ with $w \leq x$, the 
following assertions are equivalent:
\begin{enumerate}[$($a$)$]
\item\label{item:mu1} The value $\mu^\lambda(w,x) \neq 0$.
\item\label{item:mu2} The intersection $[w,w_0] \cap xW_\lambda$ is a singleton.
\end{enumerate}
\end{lemma}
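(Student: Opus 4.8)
The plan is to reduce both conditions to statements about the Bruhat interval between $w$ and $x$ and the single coset $xW_\lambda$, and then to play them off against the description of $\mu^\lambda$ in Proposition~\ref{proposition:mu_Wlambda}. First I would record two elementary reformulations. On the one hand, since $z\le w_0$ holds automatically, we have $[w,w_0]\cap xW_\lambda=\{z\in xW_\lambda:\ w\le z\}$; as $x$ is the unique longest element of its coset, it is the Bruhat-maximum of $xW_\lambda$, so $x$ itself lies in this set, and assertion~\ref{item:mu2} says precisely that $x$ is the only element of its coset lying above $w$. On the other hand, by Proposition~\ref{proposition:mu_Wlambda}, assertion~\ref{item:mu1} holds if and only if there is no $z\notin\tilde{W}^\lambda$ with $w<z<x$. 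I would then prove the equivalence \ref{item:mu1}$\Leftrightarrow$\ref{item:mu2} by establishing both contrapositives.

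For the implication ``\ref{item:mu2} fails $\Rightarrow$ \ref{item:mu1} fails'', suppose the intersection contains some $z'\neq x$. Then $z'\in xW_\lambda\setminus\{x\}$, so $z'<x$ (because $x$ is the maximum of the coset) and $z'\notin\tilde{W}^\lambda$ (because $x$ is the only element of its coset in $\tilde{W}^\lambda$, i.e.\ $xW_\lambda\cap\tilde{W}^\lambda=\{x\}$). A quick check rules out $z'=w$: otherwise $w\in xW_\lambda\cap\tilde{W}^\lambda=\{x\}$ would force $w=x$, and then the intersection would collapse to the singleton $\{x\}$. Hence $w<z'<x$ with $z'\notin\tilde{W}^\lambda$, and Proposition~\ref{proposition:mu_Wlambda} gives $\mu^\lambda(w,x)=0$.

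The reverse implication ``\ref{item:mu1} fails $\Rightarrow$ \ref{item:mu2} fails'' is the heart of the matter. If $\mu^\lambda(w,x)=0$, Proposition~\ref{proposition:mu_Wlambda} supplies some $z$ with $w<z<x$ and $z\notin\tilde{W}^\lambda$. The obstacle is that this $z$ need not lie in the coset $xW_\lambda$, whereas \ref{item:mu2} concerns exactly that coset; the real task is therefore to transfer the ``defect'' $z$ into $xW_\lambda$, strictly below $x$. I would do this with the lifting property of the Bruhat order \cite[Proposition~2.2.7]{bjorner2006combinatorics}: since $z\notin\tilde{W}^\lambda$, there is a singular reflection $t\in S$ with $zt>z$, while $xt<x$ because $x\in\tilde{W}^\lambda$. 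Applying the lifting property to $z<x$ with the right descent $t$ of $x$ that is not a descent of $z$ yields $z\le xt$. Then $w\le z\le xt<x$ exhibits $xt\in xW_\lambda$ as an element of $[w,w_0]\cap xW_\lambda$ distinct from $x$, so the intersection is not a singleton. I expect this final transfer — the judicious choice of a \emph{singular} reflection combined with the lifting property — to be the only genuinely delicate point; everything else is bookkeeping with the coset structure recorded in Section~\ref{s3.1}.
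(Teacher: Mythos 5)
Your proof is correct. It shares its overall skeleton with the paper's argument: both directions are mediated by Proposition~\ref{proposition:mu_Wlambda}, the easy implication amounts to observing that any second element of $[w,w_0]\cap xW_\lambda$ is itself a witness $z\notin\tilde{W}^\lambda$ with $w<z<x$, and the substantive implication consists of transferring a given witness $z$ into the coset $xW_\lambda$ strictly below $x$. Where you genuinely differ is the mechanism of that transfer. The paper decomposes $z=z^\lambda z_\lambda$ and $x=x^\lambda w_0^\lambda$ via Kostant's lemma, invokes the order-preservation of the projection onto minimal coset representatives (\cite[Proposition~2.5.1]{bjorner2006combinatorics}) to get $z^\lambda\le x^\lambda$, and produces the element $x^\lambda z_\lambda\in xW_\lambda$ satisfying $w<z\le x^\lambda z_\lambda<x$. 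You instead choose a singular simple reflection $t$ with $zt>z$ (which exists precisely because $z\notin\tilde{W}^\lambda$), observe that $xt<x$ since $x\in\tilde{W}^\lambda$, and apply the lifting property (\cite[Proposition~2.2.7]{bjorner2006combinatorics}) to the pair $z<x$ to get $w<z\le xt<x$. Both routes are rigorous and of comparable length; yours yields the element $xt$, which is covered by $x$ inside its coset and makes the argument more local and elementary, whereas the paper's element $x^\lambda z_\lambda$ fits the Kostant-decomposition bookkeeping used throughout Section~\ref{s3} (and reappears in the proofs of Lemmas~\ref{lemma:cosets_iso} and \ref{lemma:intersections_maximal}). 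A small additional merit of your write-up is that you explicitly rule out $z'=w$ in the easy direction, which is needed for the strict inequalities required by Proposition~\ref{proposition:mu_Wlambda} and which the paper leaves implicit.
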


\begin{proof}
Negation of \eqref{item:mu2} implies $\mu^\lambda(w,x)=0$
by Proposition~\ref{proposition:mu_Wlambda}.

Now, suppose $\mu^\lambda(w,x)=0$, and take $z \notin \tilde{W}^\lambda$ 
such that $w < z < x$. Decompose $z = z^\lambda z_\lambda$ and 
$x = x^\lambda x_\lambda$, according to Kostant's lemma. Then 
$z_\lambda \neq w_0^\lambda$ and $x_\lambda = w_0^\lambda$. By 
\cite[Proposition~2.5.1.]{bjorner2006combinatorics}, we have 
$z^\lambda \leq x^\lambda$. But then, again by Kostant's 
lemma and the subword property, we have 
$z \leq x^\lambda z_\lambda < x$. This means that we have at least two different 
elements, $x^\lambda z_\lambda$ and $x$ in $[w,w_0] \cap xW_\lambda$.
The claim follows.
\end{proof}

\section{Singular BGG complexes}\label{s4}

\subsection{BGG complex}\label{s4.1}

From now on we let $\lambda$ to be a dominant integral weight.
Verma modules and simple modules in the (singular) block 
${\mathcal{O}}_\lambda$ are parameterized by the set $W^\lambda$, 
or, equivalently, by the set $\tilde{W}^\lambda$.
We will use the latter parameterization, since it agrees better with  
translations to walls.

In this section, to each simple module $L(w \cdot \lambda)$ 
in ${\mathcal{O}}_\lambda$, where $w\in \tilde{W}^\lambda$, 
we will attach canonically its (singular) BGG complex. 
So, fix $w \in \tilde{W}^\lambda$. Set
\begin{displaymath}
X^i_w := \left\{ x \in \tilde{W}^\lambda \ \colon \ w \leq x, 
\ \mu^\lambda( w,x ) \neq 0 \text{ and } l(x)= l(w) + i  \right\}. 
\end{displaymath}
Clearly, there exists $t \leq 0$ such that $X^i$ is non-empty,  
for $i = 0,1, \ldots, t$, and empty otherwise. 
Set $X_w = X^0_w \cup \ldots \cup X^t_w$.

Consider the sequence of the form
\begin{equation}\label{equation:singular_BGG}
\ldots \to \bigoplus_{x \in X^{i+1}_w} {\Delta}(x \cdot \lambda) \to \bigoplus_{x \in X^{i}_w} {\Delta}(x \cdot \lambda) \to \ldots \to {\Delta}(w \cdot \lambda) \to L(w \cdot \lambda) \to 0.
\end{equation}
Later on we will define differentials in this sequence such that 
it becomes a complex, which we will call the \emph{(singular) BGG complex} 
attached to $L(w \cdot  \lambda)$. The differentials in 
\eqref{equation:singular_BGG} will  consist of direct sums 
of monomorphisms 
${\Delta}(x' \cdot \lambda) \hookrightarrow {\Delta}(x \cdot \lambda)$, for
$x' \in X^{i+1}_w$ and $x \in X^i_w$ such that there is an arrow
$x \to x'$. These monomorphisms are determined uniquely up to 
a scalar. We will show that it is possible to choose these 
monomorphisms (defined up to scalar) such that 
\eqref{equation:singular_BGG} becomes a complex.
We will also determine under which conditions this complex is exact, 
that is, is a resolution of $L(w \cdot \lambda)$.

\begin{remark}
The necessity of using $X_w$ as the index set of a BGG complex 
follows from the M\"obius inversion formula applied to the 
Euler characteristic of the BGG complex in the graded 
Grothendieck group, see Proposition~\ref{proposition:X_w} for details.
\end{remark}

\subsection{Translation to the wall}\label{s4.2}

The first step in our construction is to consider the classical
BGG resolution of $L(w \cdot 0)$ in ${\mathcal{O}}_0$,
see \cite[Subsection~4.2.]{boe2009kostant}, 
\begin{equation}\label{equation:regular_BGG}
\ldots \to \bigoplus_{\substack{x \geq w \\ l(x)=i+1}} {\Delta}(x \cdot 0) \to \bigoplus_{\substack{x \geq w \\ l(x)=i}} {\Delta}(x \cdot 0) \to \ldots \to {\Delta}(w \cdot 0) \to L(w \cdot 0) \to 0.
\end{equation}
Let us translate the complex in \eqref{equation:regular_BGG},
which we denote by $\boldsymbol{\Delta}$, to the
$\lambda$-wall, that is from ${\mathcal{O}}_0$ to ${\mathcal{O}}_{\lambda}$.

For this we recall the translation functor 
\begin{displaymath}
T_0 ^ \lambda \colon {\mathcal{O}}_0 \to {\mathcal{O}}_\lambda 
\end{displaymath}
which is defined as the unique (up to isomorphism) indecomposable 
projective functor from ${\mathcal{O}}_0$ to ${\mathcal{O}}_\lambda$
that maps ${\Delta}(0)$ to ${\Delta}(\lambda)$, see 
\cite{bernstein1980tensor}. The functor $T_0 ^ \lambda$ is exact, 
has both adjoints, and commutes with simple preserving duality
on $\mathcal{O}$. It acts on Verma modules and simple modules in 
the following way, see \cite[Chapter~2]{jantzen1979moduln}:

\begin{proposition}\label{proposition:translation}
For $w \in W$, we have:
\begin{itemize}
\item $T_0 ^ \lambda {\Delta}(w \cdot 0) = {\Delta}(w \cdot \lambda)$,
\item $T_0 ^ \lambda L(w \cdot 0) =  \begin{cases} L(w \cdot \lambda), 
& \text{if } w \in \tilde{W}^\lambda; \\ 0, & 
\text{otherwise}. \end{cases}$
\end{itemize}
\end{proposition}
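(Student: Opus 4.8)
The plan is to realize the functor $T_0^\lambda$ explicitly as $\mathrm{pr}_\lambda(-\otimes L(\nu))$, where $\nu$ is the dominant integral weight in the \emph{linear} $W$-orbit of $\lambda$, so that $L(\nu)$ is finite dimensional with $W\nu = W\lambda$, and where $\mathrm{pr}_\lambda$ denotes projection onto the block ${\mathcal{O}}_\lambda$. That this indecomposable projective functor sends $\Delta(0)$ to $\Delta(\lambda)$, and is thus the functor characterized above, is standard, see \cite{jantzen1979moduln}. The single computational input I would recall is the classical fact that for a finite dimensional module $V$ the tensor product $\Delta(\mu)\otimes V$ carries a Verma flag whose subquotients are exactly the $\Delta(\mu+\tau)$, with $\tau$ running over the weights of $V$ counted with multiplicity.

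For the first bullet I would apply this with $\mu = w\cdot 0$ and $V = L(\nu)$ and then apply $\mathrm{pr}_\lambda$, which retains precisely those subquotients $\Delta(w\cdot 0 + \tau)$ with $w\cdot 0 + \tau$ dot-linked to $\lambda$, i.e. $w\cdot 0 + \tau \in W\cdot\lambda$. Rewriting via the definition of the dot action, this condition reads $\tau + w\rho \in W(\lambda+\rho)$. The choice $\tau = w\lambda$ works, since then $\tau + w\rho = w(\lambda+\rho)$; moreover $w\lambda$ is an extremal weight of $L(\nu)$ and hence has multiplicity one, so $\Delta(w\cdot\lambda)$ occurs exactly once. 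The crux is \emph{uniqueness}: one must show that no other weight $\tau$ of $L(\nu)$ satisfies $\tau + w\rho \in W(\lambda+\rho)$. This is Jantzen's dominance estimate, and it is exactly where the strict dominance of $\rho$ together with the position of $\lambda+\rho$ in the closure of the dominant chamber is used. Granting it, only $\Delta(w\cdot\lambda)$ survives the projection, giving $T_0^\lambda\Delta(w\cdot 0)=\Delta(w\cdot\lambda)$.

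For the second bullet, since $T_0^\lambda$ is exact I would apply it to the surjection $\Delta(w\cdot 0)\twoheadrightarrow L(w\cdot 0)$ to obtain a surjection $\Delta(w\cdot\lambda)=T_0^\lambda\Delta(w\cdot 0)\twoheadrightarrow T_0^\lambda L(w\cdot 0)$; hence the translate is either $0$ or a quotient of $\Delta(w\cdot\lambda)$ with simple top $L(w\cdot\lambda)$. To see that it is in fact simple-or-zero, and to decide which case occurs, I would combine the biadjunction between $T_0^\lambda$ and the translation-out-of-the-wall functor $T_\lambda^0$ with the structural fact that translation to a wall sends a simple module to a simple module or to zero. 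The remaining, genuinely combinatorial, point is the \emph{nonvanishing criterion}: $T_0^\lambda L(w\cdot 0)\neq 0$ if and only if $w$ is the longest element of its coset $wW_\lambda$, that is $ws<w$ for every singular simple reflection $s$, which by Subsection~\ref{s3.1} is precisely $w\in\tilde{W}^\lambda$. The weights $\{ws\cdot 0 : s\in W_\lambda\}$ of a fixed coset all lie in one $\lambda$-fiber, and I would prove the vanishing for a non-longest $w$ by producing a singular simple reflection $s$ with $w<ws$ and showing, using Kostant's Lemma~\ref{lem1} and the adjunction, that only the longest representative can survive.

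I expect the nonvanishing criterion for simple modules to be the main obstacle, since it is here that the finer homological and combinatorial input is needed; the Verma computation, by contrast, reduces to the weight bookkeeping above together with Jantzen's dominance lemma. In particular, establishing that translation to the wall of a simple is again simple-or-zero, and then matching the surviving cosets with $\tilde{W}^\lambda$, is the step I would carry out most carefully.
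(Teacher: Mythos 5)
The paper gives no proof of this proposition at all: it is quoted as a known fact from Jantzen's book (\cite[Chapter~2]{jantzen1979moduln}), and your outline reconstructs precisely the standard argument found there --- realize $T_0^\lambda$ as $\mathrm{pr}_\lambda(-\otimes L(\nu))$, filter $\Delta(w\cdot 0)\otimes L(\nu)$ by Vermas and isolate the unique surviving factor $\Delta(w\cdot\lambda)$ via Jantzen's dominance/linkage lemma, then treat simples by exactness, duality and adjunction (simple-or-zero) together with the longest-coset-representative criterion, which indeed matches $\tilde{W}^\lambda$. So your proposal is correct and takes essentially the same (i.e.\ the cited, standard) approach, with the two inputs you ``grant'' being exactly the content of the paper's citation.
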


Therefore, a homomorphism 
${\Delta}(x' \cdot 0) \hookrightarrow {\Delta}(x \cdot 0)$ 
in $\boldsymbol{\Delta}$ can be mapped either to an 
isomorphism, in case $x'$ and $x$ are in the same $W_\lambda$-coset, 
or, otherwise, to a non-surjective monomorphism 
${\Delta}(x' \cdot \lambda) \hookrightarrow {\Delta}(x \cdot \lambda)$. 
An isomorphism between Verma modules is necessarily 
a non-zero scalar times the identity. So, for all 
$x\in\tilde{W}^\lambda$ with $x \geq w$, all the maps in the 
part of $\boldsymbol{\Delta}$ indexed by $[w,w_0] \cap xW_\lambda$ are 
mapped to non-zero multiples of the identity. We will write 
equalities for such maps in the diagrammatic presentation of 
$T_0^\lambda(\boldsymbol{\Delta})$. 

\begin{lemma}\label{lemn123}
For any $x \in [w,w_0]$, the module ${\Delta}(x \cdot \lambda)$ is 
neither a source, nor a sink, of an equality in the complex 
$T_0^\lambda(\boldsymbol{\Delta})$ if and only if $x \in X_w$.
\end{lemma}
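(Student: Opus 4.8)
The plan is to translate the statement entirely into the combinatorics of the covering graph on the interval $[w,w_0]$ and then feed it into Lemma~\ref{lemma:mu_singleton}. Recall from the discussion preceding the lemma that, after applying $T_0^\lambda$, an arrow $x\to x'$ of $\boldsymbol{\Delta}$ (so $l(x')=l(x)+1$) becomes an equality precisely when $x$ and $x'$ lie in the same $W_\lambda$-coset, that is, when $x'\in xW_\lambda$; every other arrow becomes a genuine non-surjective inclusion. Thus saying that $\Delta(x\cdot\lambda)$ is neither a source nor a sink of an equality means exactly that the vertex $x$ is not an endpoint of any equality edge, i.e.\ that $x$ has \emph{no} covering neighbour — above or below — that lies both in its own coset $xW_\lambda$ and in the index set $[w,w_0]$ of the complex. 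Throughout I will use the standard fact (a consequence of Kostant's Lemma~\ref{lem1} and the subword property) that the coset $xW_\lambda$, ordered by the restriction of the Bruhat order, is isomorphic as a graded poset to $W_\lambda$ via $u\mapsto x^\lambda u$; in particular coverings inside a coset correspond to coverings in $W_\lambda$, and $\max(xW_\lambda)=x^\lambda w_0^\lambda$ is the unique element of the coset lying in $\tilde{W}^\lambda$.

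I would first dispose of covering neighbours lying \emph{above} $x$ in the coset. Since $x\geq w$, any $x'\in xW_\lambda$ with $x\to x'$ automatically satisfies $w\leq x'\leq w_0$, so it is an index of the complex; hence such an $x'$ produces an equality edge at $x$ as soon as it exists. By the graded coset isomorphism, $x$ admits an upper covering neighbour inside $xW_\lambda$ if and only if $x$ is not the maximal element of its coset, that is, if and only if $x\notin\tilde{W}^\lambda$. This already settles the case $x\notin\tilde{W}^\lambda$: here $x$ is incident to an equality edge, so the left-hand side of the asserted equivalence fails, while $x\notin X_w$ (because $X_w\subseteq\tilde{W}^\lambda$) makes the right-hand side fail as well.

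It remains to treat $x\in\tilde{W}^\lambda$, where $x=\max(xW_\lambda)$ has no upper covering neighbour in its coset, so $x$ is incident to an equality edge if and only if it has a \emph{lower} covering neighbour $x''\in xW_\lambda$ with $x''\geq w$. If the intersection $[w,w_0]\cap xW_\lambda$ is the singleton $\{x\}$, then no such $x''$ exists. Conversely, if this intersection contains some $z\neq x$, then $z<x$ inside the coset, and — this is the one step requiring care — the graded structure of the coset provides a saturated chain from $z$ up to $x$ inside $xW_\lambda$, whose last step yields a covering neighbour $x''$ of $x$ with $x''\geq z\geq w$, so $x$ is incident to an equality edge. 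Hence, for $x\in\tilde{W}^\lambda$, the vertex $x$ is an endpoint of an equality edge if and only if $[w,w_0]\cap xW_\lambda$ is not a singleton, which by Lemma~\ref{lemma:mu_singleton} happens exactly when $\mu^\lambda(w,x)=0$. Therefore $x$ is incident to no equality edge if and only if $x\in\tilde{W}^\lambda$ and $\mu^\lambda(w,x)\neq0$, i.e.\ (using $w\leq x$) if and only if $x\in X_w$. I expect the main obstacle to be exactly this converse direction of the lower-neighbour analysis: upgrading the mere existence of a second coset element $z\geq w$ to an actual covering neighbour of $x$ present in the complex, which is where the graded structure of the cosets (cf.\ the analysis of such intersections in Section~\ref{s3.2}) must be invoked.
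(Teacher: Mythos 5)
Your proof is correct, and its endgame coincides with the paper's: identify the equalities in $T_0^\lambda(\boldsymbol{\Delta})$ with covering edges lying inside a single $W_\lambda$-coset, reduce the lemma to the claim that $x$ meets no such edge if and only if $[w,w_0]\cap xW_\lambda=\{x\}$, and then finish with Lemma~\ref{lemma:mu_singleton} (which applies because $w\in\tilde{W}^\lambda$ is fixed throughout Section~\ref{s4}). The difference is in how that reduction is justified. The paper cites Lemma~\ref{lemma:intersections_maximal}: each piece $[w,w_0]\cap xW_\lambda$ is connected as a directed graph, no equality enters or leaves a piece, so $x$ avoids equalities exactly when its piece is a singleton. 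You instead split on whether $x\in\tilde{W}^\lambda$ and build covering neighbours by hand, pushing saturated chains of $W_\lambda$ forward along $u\mapsto x^\lambda u$. Two remarks on what each route buys. First, as actually used, your argument only needs the order-preserving direction of $u\mapsto x^\lambda u$ (concatenation of reduced words plus length additivity from Lemma~\ref{lem1}) together with length counting; the order-reflecting direction of the ``graded poset isomorphism'' you assert at the outset is true but genuinely less immediate (it requires the lifting property, not just Kostant's lemma and the subword property), so it is worth noting that you never invoke it. Second, your route quietly sidesteps a small mismatch in the paper: Lemma~\ref{lemma:intersections_maximal} is stated under the hypothesis $w\le x$ for $x\in W^\lambda$ the \emph{minimal} coset representative, which need not hold for every coset meeting $[w,w_0]$ (it already fails for the coset of $w$ itself when $W_\lambda\neq\{e\}$); what the paper's proof really uses is the variant whose hypothesis is merely that the intersection is non-empty, which does follow from the same multiplication-by-$w_0$ argument. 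The only cost of your approach is length: the paper dispatches the statement in four lines, while your case analysis takes a page.
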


\begin{proof}
Fix $x \in [w,w_0]$, and denote by $\Delta_x$ the 
piece of $\boldsymbol{\Delta}$ indexed by $[w,w_0] \cap xW_\lambda$. 
All homomorphisms in $\Delta_x$ are mapped to isomorphisms 
in $T_0^\lambda(\Delta_x)$, and this piece is connected 
as a graph by Lemma~\ref{lemma:intersections_maximal}. 
Moreover, there are no isomorphisms in $T_0^\lambda(\boldsymbol{\Delta})$ 
that enter or exit $T_0^\lambda(\Delta_x)$. The claim 
now follows from Lemma~\ref{lemma:mu_singleton} and the definition of $X_w$.
\end{proof}

\begin{example}
In type $A_3$, take $\lambda + \rho = (2,1,1,0)$. 
Then $S=\{\alpha_2\}$. Take $w = s_1 s_2$. 
The complex \eqref{equation:regular_BGG} in 
the regular block is displayed in Figure~\ref{figure:regular_BGG_A3}.

\begin{figure}[ht]
\begin{tikzpicture}[>=latex,line join=bevel,]
\node (node_9) at (204.5bp,102.0bp) [draw,draw=none] {$s_{1}s_{2}s_{3}s_{1}$};
  \node (node_8) at (60.5bp,150.0bp) [draw,draw=none] {$s_{3}s_{1}s_{2}$};
  \node (node_7) at (143.5bp,102.0bp) [draw,draw=none] {$s_{1}s_{2}s_{3}s_{2}$};
  \node (node_6) at (112.5bp,150.0bp) [draw,draw=none] {$s_{1}s_{2}s_{1}$};
  \node (node_5) at (82.5bp,102.0bp) [draw,draw=none] {$s_{3}s_{1}s_{2}s_{1}$};
  \node (node_4) at (21.5bp,102.0bp) [draw,draw=none] {$s_{2}s_{3}s_{1}s_{2}$};
  \node (node_3) at (42.5bp,54.0bp) [draw,draw=none]
{$s_{2}s_{3}s_{1}s_{2}s_{1}$};
  \node (node_2) at (182.5bp,54.0bp) [draw,draw=none]
{$s_{1}s_{2}s_{3}s_{2}s_{1}$};
  \node (node_1) at (112.5bp,54.0bp) [draw,draw=none]
{$s_{1}s_{2}s_{3}s_{1}s_{2}$};
  \node (node_10) at (164.5bp,150.0bp) [draw,draw=none] {$s_{1}s_{2}s_{3}$};
  \node (node_11) at (112.5bp,198.0bp) [draw,draw=none] {$s_{1}s_{2}$};
  \node (node_0) at (112.5bp,6.0bp) [draw,draw=none]
{$s_{1}s_{2}s_{3}s_{1}s_{2}s_{1}$};
  \draw [black,->] (node_9) ..controls (177.85bp,115.91bp) and
(151.52bp,129.64bp)  .. (node_6);
  \draw [black,->] (node_0) ..controls (132.35bp,19.612bp) and
(151.32bp,32.616bp)  .. (node_2);
  \draw [black,->] (node_3) ..controls (36.846bp,66.924bp) and
(32.143bp,77.672bp)  .. (node_4);
  \draw [black,->] (node_2) ..controls (171.71bp,67.285bp) and
(162.31bp,78.853bp)  .. (node_7);
  \draw [black,->] (node_10) ..controls (149.87bp,163.5bp) and
(136.8bp,175.57bp)  .. (node_11);
  \draw [black,->] (node_9) ..controls (193.43bp,115.28bp) and
(183.79bp,126.85bp)  .. (node_10);
  \draw [black,->] (node_0) ..controls (92.649bp,19.612bp) and
(73.685bp,32.616bp)  .. (node_3);
  \draw [black,->] (node_8) ..controls (75.127bp,163.5bp) and
(88.204bp,175.57bp)  .. (node_11);
  \draw [black,->] (node_5) ..controls (76.577bp,114.92bp) and
(71.65bp,125.67bp)  .. (node_8);
  \draw [black,->] (node_3) ..controls (53.571bp,67.285bp) and
(63.21bp,78.853bp)  .. (node_5);
  \draw [black,->] (node_5) ..controls (90.668bp,115.07bp) and
(97.588bp,126.14bp)  .. (node_6);
  \draw [black,->] (node_4) ..controls (47.794bp,115.87bp) and
(73.66bp,129.51bp)  .. (node_6);
  \draw [black,->] (node_1) ..controls (139.15bp,67.906bp) and
(165.48bp,81.639bp)  .. (node_9);
  \draw [black,->] (node_2) ..controls (188.42bp,66.924bp) and
(193.35bp,77.672bp)  .. (node_9);
  \draw [black,->] (node_7) ..controls (119.64bp,115.8bp) and
(96.362bp,129.26bp)  .. (node_8);
  \draw [black,->] (node_0) ..controls (112.5bp,18.707bp) and
(112.5bp,28.976bp)  .. (node_1);
  \draw [black,->] (node_2) ..controls (153.38bp,67.978bp) and
(124.39bp,81.893bp)  .. (node_5);
  \draw [black,->] (node_7) ..controls (149.15bp,114.92bp) and
(153.86bp,125.67bp)  .. (node_10);
  \draw [black,->] (node_1) ..controls (86.206bp,67.87bp) and
(60.34bp,81.513bp)  .. (node_4);
  \draw [black,->] (node_4) ..controls (32.294bp,115.28bp) and
(41.693bp,126.85bp)  .. (node_8);
  \draw [black,->] (node_6) ..controls (112.5bp,162.71bp) and
(112.5bp,172.98bp)  .. (node_11);
  \draw [black,->] (node_1) ..controls (120.99bp,67.14bp) and
(128.24bp,78.378bp)  .. (node_7);
\end{tikzpicture}
\caption{Regular BGG complex for $L(s_1s_2 \cdot 0)$ in $A_3$}
\label{figure:regular_BGG_A3}
\end{figure}
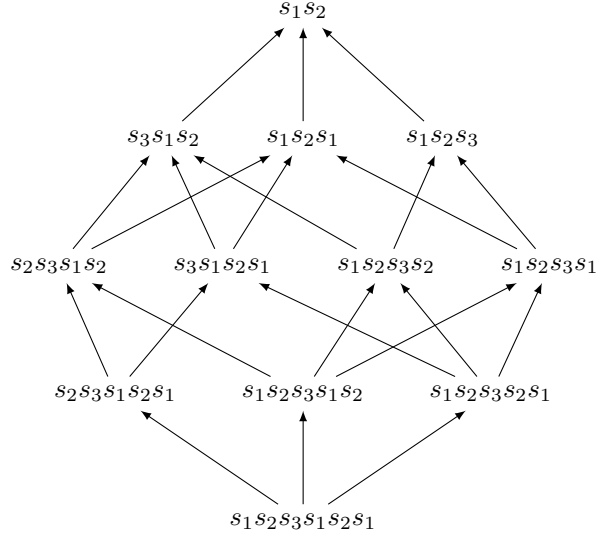

Arrows show the directions of homomorphisms (note that they have the 
opposite directions compared to the arrows in $W$). After applying 
$T_0^\lambda$, we get the complex in ${\mathcal{O}}_\lambda$ 
displayed in Figure~\ref{figure:singular_BGG_A3}.
Elements of $\tilde{W}^\lambda$ are displayed in bold font, and those in $X_w$ are underlined.

\begin{figure}[ht]
\begin{tikzpicture}[>=latex,line join=bevel,]
\node (node_9) at (204.5bp,102.0bp) [draw,draw=none] {$s_{1}s_{2}s_{3}s_{1}$};
  \node (node_8) at (60.5bp,150.0bp) [draw,draw=none] {$\bf \underline{s_{3}s_{1}s_{2}}$};
  \node (node_7) at (143.5bp,102.0bp) [draw,draw=none] {$\bf s_{1}s_{2}s_{3}s_{2}$};
  \node (node_6) at (112.5bp,150.0bp) [draw,draw=none] {$\bf \underline{s_{1}s_{2}s_{1}}$};
  \node (node_5) at (82.5bp,102.0bp) [draw,draw=none] {$\bf \underline{s_{3}s_{1}s_{2}s_{1}}$};
  \node (node_4) at (21.5bp,102.0bp) [draw,draw=none] {$\bf \underline{s_{2}s_{3}s_{1}s_{2}}$};
  \node (node_3) at (42.5bp,54.0bp) [draw,draw=none]
{$\bf \underline{s_{2}s_{3}s_{1}s_{2}s_{1}}$};
  \node (node_2) at (182.5bp,54.0bp) [draw,draw=none]
{$s_{1}s_{2}s_{3}s_{2}s_{1}$};
  \node (node_1) at (112.5bp,54.0bp) [draw,draw=none]
{$\bf s_{1}s_{2}s_{3}s_{1}s_{2}$};
  \node (node_10) at (164.5bp,150.0bp) [draw,draw=none] {$s_{1}s_{2}s_{3}$};
  \node (node_11) at (112.5bp,198.0bp) [draw,draw=none] {$\bf \underline{s_{1}s_{2}}$};
  \node (node_0) at (112.5bp,6.0bp) [draw,draw=none]
{$\bf s_{1}s_{2}s_{3}s_{1}s_{2}s_{1}$};
  \draw [black,->] (node_9) ..controls (177.85bp,115.91bp) and
(151.52bp,129.64bp)  .. (node_6);
  \draw [black,double distance=2pt] (node_0) ..controls (132.35bp,19.612bp) and
(151.32bp,32.616bp)  .. (node_2);
  \draw [black,->] (node_3) ..controls (36.846bp,66.924bp) and
(32.143bp,77.672bp)  .. (node_4);
  \draw [black,->] (node_2) ..controls (171.71bp,67.285bp) and
(162.31bp,78.853bp)  .. (node_7);
  \draw [black,->] (node_10) ..controls (149.87bp,163.5bp) and
(136.8bp,175.57bp)  .. (node_11);
  \draw [black,->] (node_9) ..controls (193.43bp,115.28bp) and
(183.79bp,126.85bp)  .. (node_10);
  \draw [black,->] (node_0) ..controls (92.649bp,19.612bp) and
(73.685bp,32.616bp)  .. (node_3);
  \draw [black,->] (node_8) ..controls (75.127bp,163.5bp) and
(88.204bp,175.57bp)  .. (node_11);
  \draw [black,->] (node_5) ..controls (76.577bp,114.92bp) and
(71.65bp,125.67bp)  .. (node_8);
  \draw [black,->] (node_3) ..controls (53.571bp,67.285bp) and
(63.21bp,78.853bp)  .. (node_5);
  \draw [black,->] (node_5) ..controls (90.668bp,115.07bp) and
(97.588bp,126.14bp)  .. (node_6);
  \draw [black,->] (node_4) ..controls (47.794bp,115.87bp) and
(73.66bp,129.51bp)  .. (node_6);
  \draw [black,double distance=2pt] (node_1) ..controls (139.15bp,67.906bp) and
(165.48bp,81.639bp)  .. (node_9);
  \draw [black,->] (node_2) ..controls (188.42bp,66.924bp) and
(193.35bp,77.672bp)  .. (node_9);
  \draw [black,->] (node_7) ..controls (119.64bp,115.8bp) and
(96.362bp,129.26bp)  .. (node_8);
  \draw [black,->] (node_0) ..controls (112.5bp,18.707bp) and
(112.5bp,28.976bp)  .. (node_1);
  \draw [black,->] (node_2) ..controls (153.38bp,67.978bp) and
(124.39bp,81.893bp)  .. (node_5);
  \draw [black,double distance=2pt] (node_7) ..controls (149.15bp,114.92bp) and
(153.86bp,125.67bp)  .. (node_10);
  \draw [black,->] (node_1) ..controls (86.206bp,67.87bp) and
(60.34bp,81.513bp)  .. (node_4);
  \draw [black,->] (node_4) ..controls (32.294bp,115.28bp) and
(41.693bp,126.85bp)  .. (node_8);
  \draw [black,->] (node_6) ..controls (112.5bp,162.71bp) and
(112.5bp,172.98bp)  .. (node_11);
  \draw [black,->] (node_1) ..controls (120.99bp,67.14bp) and
(128.24bp,78.378bp)  .. (node_7);
\end{tikzpicture}
\caption{$T_0^\lambda$ applied to Figure \ref{figure:regular_BGG_A3}}
\label{figure:singular_BGG_A3}
\end{figure}
\end{example}

\subsection{Cutting off the equalities}\label{s4.3}

The second step is to cut off the equalities that appear in 
$T_0^\lambda(\boldsymbol{\Delta})$, so that only the part indexed by $X_w$ remains.

\begin{lemma}\label{lemma:cutting_off}
If a sequence of modules
\begin{displaymath}
\xymatrix@R=1em{ A \ar[r] & B \ar[r]  \ar[rd] & C 
\ar[r]^f \ar[rd]|-(.25){g} \ar@{}[d]|{\oplus} & D 
\ar[r] \ar@{}[d]|{\oplus} & E \ar[r] & F \\ & & G 
\ar@{=}[r] \ar[ru]|-(.25)h & G  \ar[ru]} 
\end{displaymath}
is a complex (resp. exact), then so is
\begin{displaymath}
\xymatrix{ A \ar[r] & B \ar[r] & C \ar[r]^{f-hg} & D \ar[r] & E \ar[r] & F }. 
\end{displaymath}
\end{lemma}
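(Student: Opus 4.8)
The plan is to treat this as Gaussian elimination of a contractible summand: the arrow $G \xrightarrow{=} G$ is an isomorphism, so the little subsequence $0 \to G \xrightarrow{\mathrm{id}} G \to 0$ can be split off by a change of basis in the two middle positions, and what remains is precisely the reduced sequence. The first step is to name all constituent maps. Reading the two $\oplus$ symbols as direct-sum decompositions of the middle objects, the three relevant differentials are
\[
\mu_1=\binom{b_1}{b_2}\colon B\to C\oplus G,\qquad
\mu_2=\begin{pmatrix} f & h\\ g & \mathrm{id}_G\end{pmatrix}\colon C\oplus G\to D\oplus G,\qquad
\mu_3=(d,\,e')\colon D\oplus G\to E,
\]
where $b_1\colon B\to C$, $b_2\colon B\to G$, $d\colon D\to E$, $e'\colon G\to E$, and $f,g,h$ are as in the statement (so that $hg\colon C\to D$). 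All maps adjacent to $A$ and $F$ are untouched.

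Next I would introduce the two unipotent, hence invertible, change-of-basis maps $\phi=\bigl(\begin{smallmatrix}\mathrm{id}&0\\ g&\mathrm{id}\end{smallmatrix}\bigr)$ on $C\oplus G$ and $\psi=\bigl(\begin{smallmatrix}\mathrm{id}&-h\\ 0&\mathrm{id}\end{smallmatrix}\bigr)$ on $D\oplus G$, and transport the differentials so that $\{\phi,\psi\}$ becomes an isomorphism of sequences; concretely the transported maps are $\phi\mu_1$, $\psi\mu_2\phi^{-1}$ and $\mu_3\psi^{-1}$. A direct matrix computation gives
\[
\psi\,\mu_2\,\phi^{-1}=\begin{pmatrix} f-hg & 0\\ 0 & \mathrm{id}_G\end{pmatrix},\qquad
\phi\,\mu_1=\binom{b_1}{\,gb_1+b_2\,},\qquad
\mu_3\,\psi^{-1}=(d,\;dh+e').
\]
This is where the hypothesis enters. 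If the big sequence is a complex then $\mu_2\mu_1=0$ forces its $G$-component $gb_1+b_2=0$, and $\mu_3\mu_2=0$ forces its $G$-column $dh+e'=0$; hence the off-diagonal $G$-entries of $\phi\mu_1$ and $\mu_3\psi^{-1}$ vanish, and the transported sequence becomes the literal direct sum of the reduced sequence $A\to B\xrightarrow{b_1} C\xrightarrow{f-hg} D\xrightarrow{d} E\to F$ with the trivial complex $0\to G\xrightarrow{\mathrm{id}}G\to 0$.

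Finally I would conclude by invariance under isomorphism. Since $\phi$ and $\psi$ are isomorphisms and every other map is unchanged, the big sequence is isomorphic as a sequence of maps to the direct sum just described. Being a complex is preserved by isomorphism, and homology commutes with finite direct sums; as the summand $G\xrightarrow{\mathrm{id}}G$ is exact, the big sequence is a complex (resp. exact) if and only if the reduced one is, which in particular yields the stated implication. The only point requiring care — and the real content of the bookkeeping — is that the change of basis leave the neighbouring arrows $A\to B$ and $E\to F$ undisturbed and produce no new $G$-components; the former holds because $\phi,\psi$ act as the identity outside $C\oplus G$ and $D\oplus G$, and the latter is exactly the two complex relations $\mu_2\mu_1=0$ and $\mu_3\mu_2=0$. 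I expect no genuine obstacle beyond keeping the two block-matrix identities straight.
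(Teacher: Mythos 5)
Your proposal is correct. The paper itself dismisses this lemma with the single line ``A direct computation,'' so there is no detailed argument to compare against; your Gaussian-elimination write-up is a complete and valid realization of exactly that computation. Concretely: your block-matrix identities check out (with $\phi=\bigl(\begin{smallmatrix}\mathrm{id}&0\\ g&\mathrm{id}\end{smallmatrix}\bigr)$, $\psi=\bigl(\begin{smallmatrix}\mathrm{id}&-h\\ 0&\mathrm{id}\end{smallmatrix}\bigr)$ one indeed gets $\psi\mu_2\phi^{-1}=\mathrm{diag}(f-hg,\mathrm{id}_G)$), and the two complex relations $\mu_2\mu_1=0$, $\mu_3\mu_2=0$ are precisely what kills the residual $G$-components $gb_1+b_2$ and $dh+e'$, so the transported sequence splits as the reduced sequence plus the acyclic summand $G\xrightarrow{\mathrm{id}}G$. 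If anything, your packaging buys slightly more than the statement asks for: it shows the big sequence is \emph{isomorphic} to that direct sum, hence gives the equivalence (complex, resp.\ exact, if and only if the reduced sequence is), not merely the stated implication, whereas a bare element-chase (e.g.\ verifying $(f-hg)b_1=0$ and $d(f-hg)=0$ from the components of $\mu_2\mu_1=0$ and $\mu_3\mu_2=0$, then chasing kernels for exactness) would prove only the direction claimed. No gaps.
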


\begin{proof}
A direct computation.
\end{proof}

Fix $x \in \tilde{W}^\lambda \setminus X_w$ with $x \geq w$. 
We partition the intersection $[w,w_0] \cap xW_\lambda$ into 
a disjoint union of $2$-subsets as in 
Lemma~\ref{lemma:partition_of_intersection}. Denote such a 
partition by ${\mathcal{P}}_x$. We want to apply inductively 
Lemma~\ref{lemma:cutting_off} where $\xymatrix{G \ar@{=}[r] &G}$ 
corresponds to a $2$-subset in ${\mathcal{P}}_x$, so that 
we eventually exhaust all of $[w,w_0] \cap xW_\lambda$. 
Then we repeat the procedure for all such $x$. However,
we have to check that, at each step, the differential of 
our new complex has not changed in an essential way.

\begin{lemma}\label{lemnew1253}
Choose $x \in \tilde{W}^\lambda \setminus X_w$ with $x \geq w$, 
and $\{x_1 \to x_2\} \in {\mathcal{P}}_x$. The following diagrams 
cannot appear as subdiagrams in $T_0^\lambda(\boldsymbol{\Delta})$:
\begin{enumerate}[$($a$)$]
\item\label{lemnew1253.1} for $y,z \in X_w$ such that $y \to z$:
\begin{displaymath}
\xymatrix{ 
{\Delta}(z \cdot \lambda) \ar[r] \ar[rd] & 
{\Delta}(y \cdot \lambda) \\ 
{\Delta}(x_2 \cdot \lambda) \ar@{=}[r] \ar[ru] & {\Delta}(x_1 \cdot \lambda) , 
}
\end{displaymath}
\item\label{lemnew1253.2} for $y \in \tilde{W}^\lambda$, $y \neq x$, $y_1, y_2 \in [w,w_0] \cap y W_\lambda$ such that $y_1 \to y_2$:
\begin{displaymath}
\xymatrix{ 
{\Delta}(y_2 \cdot \lambda) \ar@{=}[r] 
\ar[rd] & {\Delta}(y_1 \cdot \lambda) \\ 
{\Delta}(x_2 \cdot \lambda) \ar@{=}[r] 
\ar[ru] & {\Delta}(x_1 \cdot \lambda) , 
} 
\end{displaymath}
\item\label{lemnew1253.3} for 
$\{ x_1 \to x_2\}, \{ x_3 \to x_4\} \in {\mathcal{P}}_x$ 
with $l(x_1)=l(x_3)$ and $x_1 \neq x_3$:
\begin{displaymath}
\xymatrix{ 
{\Delta}(x_4 \cdot \lambda) \ar@{=}[r] 
\ar@{=}[rd] & {\Delta}(x_3 \cdot \lambda) 
\\ {\Delta}(x_2 \cdot \lambda) \ar@{=}[r] 
\ar@{=}[ru] & {\Delta}(x_1 \cdot \lambda). 
} 
\end{displaymath}
\end{enumerate}
\end{lemma}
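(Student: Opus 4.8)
The plan is to translate each forbidden picture into a configuration of covering relations and coset memberships in $W$, and then to rule it out using the order-preserving projection $v \mapsto v^\lambda$ onto $W^\lambda$ together with the singleton criterion of Lemma~\ref{lemma:mu_singleton} and property~\eqref{item:partition2} of the partition $\mathcal{P}_x$. The first step is to fix the translation dictionary. Since the homomorphism arrows run opposite to the Bruhat arrows, an honest arrow $\Delta(a \cdot \lambda) \to \Delta(b \cdot \lambda)$ in $T_0^\lambda(\boldsymbol{\Delta})$ records a covering $b \to a$ in $[w,w_0]$ with $a$ and $b$ in distinct $W_\lambda$-cosets, whereas an equality $\Delta(a \cdot \lambda) = \Delta(b \cdot \lambda)$ records such a covering with $a$ and $b$ in the same coset. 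Reading the three diagrams through this dictionary and comparing lengths, each one becomes a complete bipartite configuration $K_{2,2}$ of covering relations between two consecutive Bruhat ranks, decorated by which of the four vertices lie in which coset.

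Diagram~\eqref{lemnew1253.3} I would dispose of directly, without any projection. Here all four vertices lie in $xW_\lambda$, and the diagonal equality forces the covering $x_1 \to x_4$ in $W$, so in particular $x_1 \leq x_4$. Since $\{x_1 \to x_2\}$ and $\{x_3 \to x_4\}$ both lie in $\mathcal{P}_x$ with $l(x_1) = l(x_3)$ and $x_1 \neq x_3$, this directly contradicts property~\eqref{item:partition2} of Lemma~\ref{lemma:partition_of_intersection}.

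For the remaining two diagrams I would pass to the parabolic quotient, using that $v \mapsto v^\lambda$ is order preserving \cite[Proposition~2.5.1]{bjorner2006combinatorics} and that $l(v) = l(v^\lambda) + l(w_0^\lambda)$ for $v \in \tilde{W}^\lambda$ by Lemma~\ref{lem1}. For diagram~\eqref{lemnew1253.2}, projecting the two honest cross-arrows yields $x^\lambda \leq y^\lambda$ and $y^\lambda \leq x^\lambda$, hence $x^\lambda = y^\lambda$; as $x, y \in \tilde{W}^\lambda$ are both longest representatives of their cosets, this means $x = y$, contradicting the hypothesis $y \neq x$. For diagram~\eqref{lemnew1253.1}, projecting the three honest arrows yields $y^\lambda \leq x^\lambda \leq z^\lambda$, while $y \to z$ with $y, z \in \tilde{W}^\lambda$ forces $l(z^\lambda) = l(y^\lambda) + 1$, so $y^\lambda \to z^\lambda$ is a covering in the graded poset $W^\lambda$. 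Consequently $x^\lambda$ equals either $y^\lambda$ or $z^\lambda$, so $xW_\lambda$ coincides with $yW_\lambda$ or with $zW_\lambda$. But $y, z \in X_w$, so by Lemma~\ref{lemma:mu_singleton} both $[w,w_0] \cap yW_\lambda$ and $[w,w_0] \cap zW_\lambda$ are singletons, whereas $xW_\lambda$ contains the two distinct elements $x_1$ and $x_2$ of $[w,w_0]$; this is the desired contradiction.

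The computations themselves are routine, so I expect the only genuinely delicate step to be the very first one: reading the orientations and the solid-versus-double edges correctly out of the \texttt{xy} diagrams, keeping in mind that the module homomorphisms point opposite to the Bruhat covers. Every subsequent inequality hinges on this dictionary, so I would state it explicitly and verify the length bookkeeping separately for each of the three diagrams before invoking the poset lemmas.
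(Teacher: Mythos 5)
Your proof is correct. For diagram \eqref{lemnew1253.3} it is literally the paper's argument: the diagonal equality encodes the covering $x_1\to x_4$, which contradicts Lemma~\ref{lemma:partition_of_intersection}\eqref{item:partition2}. For the other two diagrams you take a genuinely different, uniformly combinatorial route. The paper rules out diagram \eqref{lemnew1253.1} using only the single diagonal arrow: a non-zero morphism $\Delta(z\cdot\lambda)\to\Delta(x_1\cdot\lambda)$ forces $x_1\le z$, projecting to the longest coset representatives gives $x\le z$, and then the vanishing $\mu^\lambda(w,x)=0$ propagates to $\mu^\lambda(w,z)=0$ by Proposition~\ref{proposition:mu_Wlambda}, contradicting $z\in X_w$; you instead use all three solid arrows to sandwich $x^\lambda$ between $y^\lambda$ and $z^\lambda$ and conclude with the singleton criterion of Lemma~\ref{lemma:mu_singleton} --- equivalent in substance, but using more of the diagram. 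The bigger divergence is diagram \eqref{lemnew1253.2}: the paper's argument there is module-theoretic (the diagonals are injective but not surjective, so the composite $\Delta(y_2\cdot\lambda)\to\Delta(x_1\cdot\lambda)=\Delta(x_2\cdot\lambda)\to\Delta(y_1\cdot\lambda)$ would send the highest weight vector into the radical of $\Delta(y_1\cdot\lambda)$, impossible since $y_1\cdot\lambda=y_2\cdot\lambda$), whereas yours is purely order-theoretic: projecting the two cross coverings gives $x^\lambda\le y^\lambda\le x^\lambda$, hence $xW_\lambda=yW_\lambda$ and $x=y$, a contradiction. What each approach buys: yours is uniform, reducing all three parts, via the covering dictionary you rightly single out as the crux, to Bruhat combinatorics and \cite[Proposition~2.5.1]{bjorner2006combinatorics}; but it relies on the arrows of $T_0^\lambda(\boldsymbol{\Delta})$ arising from covering relations. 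The paper's proofs of \eqref{lemnew1253.1} and \eqref{lemnew1253.2} use only that the maps are non-zero (respectively, non-surjective) monomorphisms between the indicated Verma modules, so they exclude slightly more general configurations than actual subdiagrams of the translated complex. For the lemma as stated, and for its application in Corollary~\ref{corneq1255}, both arguments are fully adequate.
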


\begin{proof}
Consider diagram \eqref{lemnew1253.1}. If there is a non-zero 
morphism ${\Delta}(z \cdot \lambda) \to {\Delta}(x_1 \cdot \lambda)$, 
then $x_1 \leq z$. By \cite[Proposition~2.5.1]{bjorner2006combinatorics},
we have $x \leq z$. But then $\mu^\lambda(w,x) = 0$ implies 
$\mu^\lambda(w,z) = 0$, which is a contradiction with $z \in X_w$.

In diagram \eqref{lemnew1253.2}, the diagonal arrows are injections, 
but not surjections. The composition 
${\Delta}(y_2 \cdot \lambda) \to {\Delta}(x_1 \cdot \lambda) = 
{\Delta}(x_2 \cdot \lambda) \to {\Delta}(y_1 \cdot \lambda)$ 
would map the highest weight space of ${\Delta}(y_2 \cdot \lambda)$ 
into the radical of ${\Delta}(y_1 \cdot \lambda)$. But this 
is impossible since $y_1 \cdot \lambda = y_2 \cdot \lambda$.

The fact that diagram \eqref{lemnew1253.3} is not possible
follows from the choice of the partition ${\mathcal{P}}_x$, 
namely, Lemma~\ref{lemma:partition_of_intersection}\eqref{item:partition2}.
\end{proof}

Lemma~\ref{lemnew1253} has the following consequence:

\begin{corollary}\label{corneq1255}
When using Lemma~\ref{lemma:cutting_off} to cut off 
an equality ${\Delta}(x_2 \cdot \lambda)= {\Delta}(x_1 \cdot \lambda)$ 
corresponding to $\{x_1 \to x_2\} \in {\mathcal{P}}_x$ 
from $T_0^\lambda(\boldsymbol{\Delta})$, the following holds:
\begin{enumerate}[$($a$)$]
\item\label{corneq1255.1} No morphism is changed between 
Verma modules indexed by two elements from $X_w$,
\item\label{corneq1255.2} No isomorphism is changed between 
Verma modules indexed by elements from a different coset $y W_\lambda$,
\item\label{corneq1255.3} No isomorphism is changed 
corresponding to some other pair in the same partition ${\mathcal{P}}_x$.
\end{enumerate}
\end{corollary}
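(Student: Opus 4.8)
The plan is to observe that Corollary~\ref{corneq1255} is essentially a bookkeeping consequence of Lemma~\ref{lemnew1253}, so the proof should consist of unwinding exactly what the cutting-off operation of Lemma~\ref{lemma:cutting_off} does to the differential and then checking that each of the three prohibited subdiagrams in Lemma~\ref{lemnew1253} rules out precisely one of the three ways in which an undesired change could occur. First I would recall the mechanism: when we cut off the equality $\Delta(x_2 \cdot \lambda) = \Delta(x_1 \cdot \lambda)$ (playing the role of $G = G$ in Lemma~\ref{lemma:cutting_off}), the only morphisms that get altered are those of the form $f \mapsto f - hg$, where $g$ is an incoming arrow to the source $x_1$ (i.e.\ an arrow $C \to G$ with target $\Delta(x_1 \cdot \lambda)$), $h$ is the outgoing arrow from the sink $x_2$ (i.e.\ $G \to D$ with source $\Delta(x_2 \cdot \lambda)$), and $f\colon C \to D$ is the pre-existing morphism between those same two endpoints. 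Thus a morphism $\Delta(b \cdot \lambda) \to \Delta(a \cdot \lambda)$ is changed only if there is simultaneously an arrow $\Delta(b \cdot \lambda) \to \Delta(x_1 \cdot \lambda)$ and an arrow $\Delta(x_2 \cdot \lambda) \to \Delta(a \cdot \lambda)$ in the current diagram.

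Next I would translate this description into the language of the Bruhat order: an arrow $\Delta(z \cdot \lambda) \to \Delta(x_1 \cdot \lambda)$ forces $x_1 \to z$ in $\tilde{W}^\lambda$ (recalling that homomorphism arrows run opposite to the covering arrows in $W$), and similarly for the outgoing arrow from $x_2$. So the ``corner'' configuration that produces a genuine change in the morphism between two fixed modules $\Delta(b \cdot \lambda)$ and $\Delta(a \cdot \lambda)$ is exactly a commutative square of the shape drawn in Lemma~\ref{lemnew1253}, with the cut-off equality along the bottom and the morphism-being-modified along the top. With this identification in hand, each clause of the corollary matches a clause of the lemma directly. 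For clause~\eqref{corneq1255.1}, if the modified top morphism ran between two modules $\Delta(z \cdot \lambda), \Delta(y \cdot \lambda)$ indexed by elements $y,z \in X_w$ with $y \to z$, then the relevant square is precisely the forbidden diagram~\eqref{lemnew1253.1}; since that subdiagram cannot occur in $T_0^\lambda(\boldsymbol{\Delta})$, no such morphism is ever touched.

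For clause~\eqref{corneq1255.2}, a changed isomorphism between Verma modules from a different coset $yW_\lambda$ (with $y \neq x$) would yield the forbidden diagram~\eqref{lemnew1253.2}, and for clause~\eqref{corneq1255.3}, a changed isomorphism attached to another pair $\{x_3 \to x_4\}$ of the same partition ${\mathcal{P}}_x$ with $l(x_1) = l(x_3)$ and $x_1 \neq x_3$ would yield the forbidden diagram~\eqref{lemnew1253.3}; both are excluded by Lemma~\ref{lemnew1253}. The one point requiring a little care, and the step I expect to be the main obstacle, is the inductive nature of the cutting-off: after we remove some equalities, the differential $f$ on the top edge may itself already be a corrected map $f - hg$ rather than an original morphism of $T_0^\lambda(\boldsymbol{\Delta})$, so one must argue that the \emph{support} of the differential (which pairs of indices carry a nonzero or identity morphism) is not enlarged by the procedure in a way that would create one of the forbidden squares at a later stage. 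I would handle this by phrasing the induction so that the invariant being maintained is exactly the three conclusions of the corollary, noting that $f - hg$ is supported on a pair of indices only if $f$ or the composite $hg$ already was, and that $hg$ connects the source of $g$ to the target of $h$—i.e.\ indices already linked through $x_1,x_2$—so no new adjacency outside the coset structure controlled by Lemma~\ref{lemnew1253} can appear. Once this invariant is set up, the three cases close immediately by citing the corresponding parts of Lemma~\ref{lemnew1253}.
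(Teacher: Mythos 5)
Your proposal is correct and follows essentially the same route as the paper: the paper states Corollary~\ref{corneq1255} as an immediate consequence of Lemma~\ref{lemnew1253}, and your argument is precisely the unwinding of that implication — a morphism is altered by the cut $f \mapsto f-hg$ only when both diagonal arrows of one of the three forbidden squares are present, so each clause of the corollary is ruled out by the corresponding clause of the lemma. Your additional point about the iterative nature of the procedure (that arrows created by $f \mapsto f-hg$ are composites through the cut pair, so the support of the differential only spreads along Bruhat-linked indices and no forbidden configuration can be created at a later stage) addresses a subtlety the paper passes over in silence, and your sketch of how to close it is sound.
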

 
Therefore, we can cut-off pairs from ${\mathcal{P}}_x$, for all 
$x \in \tilde{W}^\lambda \setminus X_w$ with $x \geq w$, in any order. 
This implies the following statement.

\begin{theorem}\label{theorem:singular_BGG}
Let $\lambda$ be dominant and integral and $w\in \tilde{W}^\lambda$.
\begin{enumerate}[$($a$)$]
\item \label{theorem:singular_BGG.1}
One can choose non-zero monomorphisms as explained in
Subsection~\ref{s4.1} so that the singular BGG complex 
\eqref{equation:singular_BGG}  becomes a complex.
\item \label{theorem:singular_BGG.2}
If the regular BGG complex \eqref{equation:regular_BGG} of 
$L(w \cdot 0)$ is exact, then so is the singular BGG 
complex \eqref{equation:singular_BGG} of $L(w \cdot \lambda)$.
\end{enumerate}
\end{theorem}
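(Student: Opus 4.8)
The plan is to leverage the machinery assembled in Subsections~\ref{s4.2} and~\ref{s4.3}: the singular BGG complex \eqref{equation:singular_BGG} is obtained from the regular BGG complex \eqref{equation:regular_BGG} by first applying the exact functor $T_0^\lambda$ and then inductively deleting the equalities (isomorphisms between Verma modules in the same $W_\lambda$-coset) using Lemma~\ref{lemma:cutting_off}. The starting point for both parts is the fact, recalled from \cite{boe2009kostant}, that $\boldsymbol{\Delta}$ is genuinely a complex, so that $T_0^\lambda(\boldsymbol{\Delta})$ is a complex because $T_0^\lambda$ is an (additive, exact) functor and hence preserves the relations $d\circ d=0$. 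Moreover, by Proposition~\ref{proposition:translation}, $T_0^\lambda$ sends each $\Delta(x\cdot 0)$ to $\Delta(x\cdot\lambda)$ and sends $L(w\cdot 0)$ to $L(w\cdot\lambda)$ precisely because $w\in\tilde{W}^\lambda$, so the shape of $T_0^\lambda(\boldsymbol{\Delta})$ is exactly the diagram appearing in the $A_3$ example, with equalities along each coset-piece as guaranteed by the discussion preceding Lemma~\ref{lemn123}.

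For part~\eqref{theorem:singular_BGG.1}, the strategy is to run the cutting-off procedure of Subsection~\ref{s4.3} to completion. For each $x\in\tilde{W}^\lambda\setminus X_w$ with $x\ge w$, I would fix the partition ${\mathcal{P}}_x$ from Lemma~\ref{lemma:partition_of_intersection} and apply Lemma~\ref{lemma:cutting_off} once per $2$-subset $\{x_1\to x_2\}\in{\mathcal{P}}_x$, contracting the equality $\Delta(x_2\cdot\lambda)=\Delta(x_1\cdot\lambda)$ and replacing the relevant morphism $f$ by $f-hg$. The content that makes this well-defined is exactly Corollary~\ref{corneq1255}: when one equality is collapsed, no morphism between two $X_w$-indexed Vermas is altered, no isomorphism in a different coset $yW_\lambda$ is altered, and no isomorphism belonging to another pair of the same ${\mathcal{P}}_x$ is altered. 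Thus the cutting-off operations commute in the sense that the final object does not depend on the order, and at each stage Lemma~\ref{lemma:cutting_off} guarantees that we still have a complex. After exhausting all such $x$, by Lemma~\ref{lemn123} the only surviving terms are the $\Delta(x\cdot\lambda)$ with $x\in X_w$, and the result is precisely \eqref{equation:singular_BGG}; the surviving maps are the non-zero monomorphisms $\Delta(x'\cdot\lambda)\hookrightarrow\Delta(x\cdot\lambda)$ described in Subsection~\ref{s4.1}, which is what we wanted.

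For part~\eqref{theorem:singular_BGG.2}, I would observe that the very same two operations preserve exactness. Since $T_0^\lambda$ is exact, if $\boldsymbol{\Delta}$ is exact (a resolution of $L(w\cdot 0)$) then $T_0^\lambda(\boldsymbol{\Delta})$ is exact, being $T_0^\lambda$ applied term-by-term to an exact sequence. Then Lemma~\ref{lemma:cutting_off} is stated to preserve exactness as well as the complex property (\emph{``is a complex (resp. exact)''}), so each cutting-off step keeps the sequence exact; iterating over all $x$ and all pairs of ${\mathcal{P}}_x$, the end result \eqref{equation:singular_BGG} is exact, i.e.\ a resolution of $T_0^\lambda(L(w\cdot 0))=L(w\cdot\lambda)$.

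The main obstacle, and the place where all the earlier combinatorial lemmas do their work, is verifying that the inductive cutting-off is consistent, that is, that Corollary~\ref{corneq1255} genuinely applies at every stage of the induction and not merely at the first step. Concretely, one must be sure that after several contractions the local picture around a not-yet-removed equality still matches one of the configurations ruled in or out by Lemma~\ref{lemnew1253}, so that the hypotheses of Lemma~\ref{lemma:cutting_off} (a genuine $G\xrightarrow{=}G$ with the surrounding maps into/out of it) remain valid and no new ``accidental'' compositions through already-contracted nodes spoil the three non-interference claims of Corollary~\ref{corneq1255}. I expect this to reduce, as in the proof of Lemma~\ref{lemnew1253}, to the subword property together with the structure of ${\mathcal{P}}_x$ furnished by Lemma~\ref{lemma:partition_of_intersection}\eqref{item:partition2}, so that the induction goes through and both assertions follow simultaneously from the single ``resp.''-statement of Lemma~\ref{lemma:cutting_off}.
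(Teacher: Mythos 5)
Your proposal is correct and follows essentially the same route as the paper: the paper's "proof" of this theorem is precisely the preceding discussion — apply the exact functor $T_0^\lambda$ to the regular BGG complex, then iteratively remove equalities via Lemma~\ref{lemma:cutting_off}, with Lemma~\ref{lemnew1253} and Corollary~\ref{corneq1255} guaranteeing that the contractions do not interfere with one another (hence can be done in any order) and Lemma~\ref{lemn123} identifying the surviving terms as those indexed by $X_w$. The consistency-of-induction issue you flag at the end is exactly what Corollary~\ref{corneq1255} is designed to settle, and the paper treats it with the same brevity you do.
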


The converse of the second statement above is not true in general, 
since the translation functor can kill homology of the regular 
BGG complex, if it consists of simple modules not parameterized 
by the maximal coset representatives 
(see Proposition~\ref{proposition:translation}).
The smallest example is the following.

\begin{example}
In type $B_3$ take $\lambda + \rho = (1,0,0)$. Then $S=\{\alpha_2, \alpha_3\}$. Take
\begin{displaymath}
w := w_0^\lambda = s_3 s_2 s_3 s_2 \in \tilde{W}^\lambda. 
\end{displaymath}
We want to show that the regular BGG complex of $L(w \cdot 0)$ is 
not exact, but that the singular one corresponding to 
$L(w \cdot \lambda)$ is exact.

Denote by $A$ the set of immediate ascendants of $w$, i.e., 
$A := \{x \in W \colon w \to x \}$. Denote also:
\begin{displaymath}
v := s_2 s_3 s_2 s_1 s_2 s_3 s_2 . 
\end{displaymath}
One can check that the right hand side is a reduced expression of $v$, 
that $v \notin \tilde{W}^\lambda$, and that $w < v$. 
One can calculate the following Kazhdan-Lusztig polynomials:
\begin{equation}
\label{equation:KL_example}
P_{w,x}(q) = \begin{cases} 1+q, & \text{ if }x = v; \\ 
1, & \text{ if }x \geq w, \ x \neq v. \end{cases}
\end{equation} 
From \cite[Theorem 1.4.1]{irving1990singular} it follows that 
the first radical layer ${\operatorname{rad}}^1 {\Delta}(w \cdot 0)$ 
consists of the composition factors $L(x \cdot 0)$ for 
$x \in A \cup \{v\}$. Consider the first part of the regular 
BGG complex \eqref{equation:regular_BGG}:
\begin{displaymath}
\ldots \to \bigoplus_{x \in A} {\Delta}(x \cdot 0) 
\stackrel{d_1}{\longrightarrow} {\Delta}(w \cdot 0) 
\stackrel{d_0}{\twoheadrightarrow} L(w \cdot 0) \to 0.
\end{displaymath}
Clearly ${\operatorname{Ker}} (d_0) = {\operatorname{rad}}\, {\Delta}(w \cdot 0)$. 
The Verma modules on the left hand side map onto the submodules of 
${\Delta}(w \cdot 0)$ with heads 
$L(x \cdot 0) \subseteq {\operatorname{rad}}^1 {\Delta}(w \cdot 0)$ 
with $x \in A$. So 
$L(v \cdot 0) \subseteq {\operatorname{rad}}^1 {\Delta}(w \cdot 0)$ 
is not in the image of $d_1$, and hence the zero-th homology 
group of the regular BGG complex is non-zero since 
${\operatorname{Ker}}(d_0) / \mathrm{Im}(d_1) \tto L(v \cdot 0)$.
In fact, from \eqref{equation:KL_example}, we may deduce 
${\operatorname{Ker}}(d_0) / \mathrm{Im}(d_1) \cong L(v \cdot 0)$.

On the other hand, one can check that $\tilde{W}^\lambda$ is a linear poset:
\begin{displaymath}
\tilde{W}^\lambda = \{ w \to s_1 w \to s_2s_1 w \to s_3s_2s_1 w 
\to s_2s_3s_2s_1 w \to s_1s_2s_3s_2s_1 w\}.
\end{displaymath}
We want to show that $X_w = \{w \to s_1 w \}$. For this, we need 
to find $z \not\in \tilde{W}^\lambda$ such that $w<x<s_2s_1 w$. 
One can check that $z = s_2s_3s_1s_2s_3$ 
does the job. So, we see that our singular BGG complex is:
\begin{equation}\label{eqneq3251}
0 \to {\Delta}(s_1 w \cdot \lambda) \hookrightarrow 
{\Delta}(w \cdot \lambda) \twoheadrightarrow L(w \cdot \lambda) \to 0. 
\end{equation}
From \eqref{equation:KL_example} and 
\cite[Theorem~1.4.2]{irving1990singular} it follows that 
${\operatorname{rad}}^1 {\Delta}(w \cdot \lambda)$ is 
just $L(s_1 w \cdot \lambda)$. This implies that \eqref{eqneq3251} is exact.
\end{example}

In the next section we give a computable 
criteria for a singular BGG complex to be exact.

The singular BGG complex of a dominant simple module admits 
a simpler combinatorial description. Choose a dominant weight $\lambda^c$ such that $\lambda^c + \rho$ is orthogonal precisely to those simple roots $\alpha$ 
which are not orthogonal to $\lambda+\rho$. The subgroup 
$W_{\lambda^c}$ is \emph{complementary} to $W_\lambda$, 
it is generated by all simple non-singular reflections. 

\begin{proposition}\label{proposition:singuar_BGG_dominant}
We have $X_{w_0^\lambda} = W_{\lambda^c} \cdot w_0^\lambda$.
In other words, one can identify weights which appear in the singular BGG complex of $L(w_0^\lambda \cdot \lambda) = L(\lambda)$ with the orbit of $\lambda$ under
the complementary parabolic subgroup.
\end{proposition}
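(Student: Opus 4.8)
The plan is to unravel the definition of $X_{w_0^\lambda}$ and match it against the coset $W_{\lambda^c} \cdot w_0^\lambda$. Recall that
\[
X_{w_0^\lambda} = \left\{ x \in \tilde{W}^\lambda \ \colon \ w_0^\lambda \leq x, \ \mu^\lambda(w_0^\lambda, x) \neq 0 \right\},
\]
so the task splits naturally into two parts: first, to show that every element of $W_{\lambda^c} \cdot w_0^\lambda$ lies in $\tilde{W}^\lambda$, is $\geq w_0^\lambda$, and has nonvanishing M\"obius value; and second, to show that no element outside this coset can satisfy all three conditions. The starting observation is that $w_0^\lambda$ is the minimal element of $\tilde{W}^\lambda$ (it is the shortest representative of the identity coset times $w_0^\lambda$), so the condition $w_0^\lambda \leq x$ is automatic for all $x \in \tilde{W}^\lambda$ and can be dropped.

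First I would set up the key bijective correspondence. Since $W_{\lambda^c}$ is generated by all simple non-singular reflections and is complementary to $W_\lambda$, and since $\lambda^c$ is chosen so that its singular roots are exactly the non-singular roots of $\lambda$, there is a clean interplay: $W_{\lambda^c}$ plays the role of ``$W^\lambda$ restricted to short elements'' and the product $W_{\lambda^c} \cdot w_0^\lambda$ should be exactly the set of $x \in \tilde{W}^\lambda$ whose only reduced-expression flexibility is absorbed into the $w_0^\lambda$ tail. The cleanest route to nonvanishing of $\mu^\lambda(w_0^\lambda, x)$ is via Lemma~\ref{lemma:mu_singleton}: I would show that for $x \in \tilde{W}^\lambda$ we have $\mu^\lambda(w_0^\lambda, x) \neq 0$ if and only if $[w_0^\lambda, w_0] \cap x W_\lambda$ is a singleton. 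By Lemma~\ref{lemma:intersections_maximal}, this intersection is isomorphic as a directed graph to an interval $[y, w_0^\lambda]$ inside $W_\lambda$, so it is a singleton precisely when that interval collapses, i.e. when $y = w_0^\lambda$.

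The substantive step is therefore to characterize combinatorially when $[w_0^\lambda, w_0] \cap x W_\lambda$ is a singleton, and to verify this happens exactly for $x \in W_{\lambda^c} \cdot w_0^\lambda$. I would argue that writing $x = x' w_0^\lambda$ with $x' \in W^\lambda$, the coset $x W_\lambda = x' W_\lambda$, and the intersection $[w_0^\lambda, w_0] \cap x' W_\lambda$ is a singleton exactly when $x'$ is forced to ``use up'' $W_\lambda$ completely, which happens precisely when $x' \in W_{\lambda^c}$. Concretely, I expect that $x' \in W_{\lambda^c}$ forces $x = x' w_0^\lambda$ to be the unique longest element of its coset lying above $w_0^\lambda$ in the relevant interval, whereas if $x'$ genuinely involves singular directions there is extra room in the coset and the intersection acquires a second element. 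This is the point where I would lean on the subword property together with Kostant's lemma (Lemma~\ref{lem1}) to control the decomposition precisely.

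The main obstacle I anticipate is verifying the converse inclusion cleanly, namely that an element $x \in \tilde{W}^\lambda \setminus (W_{\lambda^c} \cdot w_0^\lambda)$ necessarily yields a non-singleton intersection and hence $\mu^\lambda(w_0^\lambda, x) = 0$; the forward inclusion (elements of the complementary orbit give singletons) is likely the more direct computation. To handle the converse I would produce an explicit second element of $[w_0^\lambda, w_0] \cap x W_\lambda$: given the Kostant decomposition $x = x^\lambda w_0^\lambda$ with $x^\lambda \notin W_{\lambda^c}$, I would exhibit a singular reflection $t$ such that $x^\lambda w_0^\lambda t$ (or an analogous modification) still lies in the interval $[w_0^\lambda, w_0]$ and in the same $W_\lambda$-coset, distinct from $x$. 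Pinning down this witness so that it lands inside the correct Bruhat interval is the delicate part, and I would verify it using the graph-isomorphism structure of Lemma~\ref{lemma:intersections_maximal} rather than by hand, identifying the second element with a nonmaximal element of the corresponding interval $[y, w_0^\lambda] \subseteq W_\lambda$.
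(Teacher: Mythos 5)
Your plan is correct, but it takes a genuinely different (and somewhat longer) route than the paper. The paper transports everything to $W^\lambda$: using Proposition~\ref{proposition:mu_Wlambda} it identifies $X_{w_0^\lambda} = Y\cdot w_0^\lambda$, where $Y=\{u\in W^\lambda\,\colon\,[e,u]\subseteq W^\lambda\}$, and then proves $Y=W_{\lambda^c}$ by two one-line subword arguments: every element below an element of $W_{\lambda^c}$ is again a word in non-singular letters, hence lies in $W^\lambda$; conversely, if $u\in W^\lambda\setminus W_{\lambda^c}$, then some singular simple reflection $s$ satisfies $s\leq u$, and $s\notin W^\lambda$ already witnesses $u\notin Y$ --- no second coset element is ever needed. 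You instead stay in $\tilde{W}^\lambda$ and route the M\"obius condition through Lemma~\ref{lemma:mu_singleton} and Lemma~\ref{lemma:intersections_maximal}, which obliges you, for the converse, to exhibit a second element of $[w_0^\lambda,w_0]\cap xW_\lambda$. This does work: writing $x=x^\lambda w_0^\lambda$ and choosing a singular simple $s\leq x^\lambda$, the element $x^\lambda s w_0^\lambda = x^\lambda w_0^\lambda t$, with $t=w_0^\lambda s w_0^\lambda$, lies in $xW_\lambda$, is strictly shorter than $x$, and satisfies $w_0^\lambda\leq x^\lambda s w_0^\lambda$ because the subword formed by the letter $s$ inside a reduced word for $x^\lambda$ followed by all of a reduced word for $sw_0^\lambda$ spells $w_0^\lambda$ reducedly. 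Your forward direction also goes through as you expect: reduced words for $w_0^\lambda$ use only singular letters, so any $w_0^\lambda$-subword of $(\text{reduced } x')(\text{reduced } u)$ with $x'\in W_{\lambda^c}$ and $u\in W_\lambda$ must sit entirely in the $u$-part, forcing $u=w_0^\lambda$. In effect, your converse re-proves, in this special case, the argument inside the paper's proof of Lemma~\ref{lemma:mu_singleton}; both proofs ultimately rest on the same combinatorial core (Kostant's lemma, the subword property, and the fact that elements of a standard parabolic subgroup have reduced expressions only in its generators), but the paper's formulation gets the vanishing of $\mu^\lambda$ from a cheaper witness. One caveat on your last step: certifying the witness ``via the graph isomorphism of Lemma~\ref{lemma:intersections_maximal} rather than by hand'' does not actually save work, since that isomorphism is only pinned down once the minimal element of the intersection is known; the explicit subword computation above is still required.
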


\begin{proof}
Denote by $Y$ the subset of $W^\lambda$ consisting of those $w$ for 
which $[e,w] \subseteq W^\lambda$. It is clear that 
$Y \cdot w_0^\lambda = X_{w_0^\lambda}$, and thus we need to prove 
that $Y = W_{\lambda^c}$. The claim  that $W_{\lambda^c} \subseteq Y$ 
follows from the subword property. For the converse, suppose that 
$w \in W^\lambda \setminus W_{\lambda^c}$. Then, a fixed 
reduced expression of $w$ must contain some singular reflection $s$. 
But then $s < w$ and $s \notin W^\lambda$ and hence $w \notin Y$.
\end{proof}
The description of the parameter set of the singular BGG complex with a 
dominant highest weight given by 
Proposition~\ref{proposition:singuar_BGG_dominant} is related to 
\cite{futorny1998bgg}. See Section \ref{s9}.

\section{Exactness and KLV-polynomials}\label{s5}

\subsection{Singular KLV polynomials}\label{s5.1}

In \cite{irving1990singular}, the following singular version of the KLV (Kazhdan-Lusztig-Vogan) polynomials was introduced. 
Take an integral, possibly singular weight $\mu$ which is \emph{antidominant}, i.e., $w_0 \cdot \mu$ is dominant, and for $y,z$ in $W^\mu$ put
\begin{equation*}
P^\mu_{y,z} (q) = \sum_{j} \dim {\operatorname{Ext}}^j_{\mathcal{O}} 
( {\Delta}(y \cdot \mu) , L(z \cdot \mu) ) \cdot q^\frac{l(z)-l(y)-j}{2}.
\end{equation*}
It is a polynomial, equal to $0$ unless $y \leq z$, equal 
to $1$ if $y=z$, and, in general, of degree at most $(l(z)-l(y)-1)/2$. 

If $\mu$ is regular, then $P^\mu_{y,z} =P_{y,z}$ agrees with the 
usual KL (Kazhdan-Lusztig) polynomials, by a result of Vogan. 
In general, in \cite{soergel1989n-cohomolohy, irving1990singular} 
the following formula that relates the singular KLV-polynomials 
to the usual ones is proved:

\begin{proposition}\label{prop5.1-1}
For $\mu, y, z$ as above, we have
\begin{displaymath}
P^\mu_{y,z} = \sum_{t \in W_\mu} (-1)^{l(t)} P_{yt,z}. 
\end{displaymath}
\end{proposition}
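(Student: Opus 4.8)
The plan is to reduce the singular $\operatorname{Ext}$-spaces to the regular block by means of translation functors, and then to recover the polynomial identity by keeping careful track of the internal grading. Let $T_0^\mu\colon\mathcal{O}_0\to\mathcal{O}_\mu$ be translation to the $\mu$-wall and $T_\mu^0\colon\mathcal{O}_\mu\to\mathcal{O}_0$ translation out of the wall. Both are exact and biadjoint, and $T_0^\mu$ preserves projectivity. By the antidominant analogue of Proposition~\ref{proposition:translation} we have $T_0^\mu\Delta(x\cdot 0)=\Delta(x\cdot\mu)$ for all $x\in W$, while $T_0^\mu L(z\cdot 0)=L(z\cdot\mu)$ for $z\in W^\mu$. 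Writing $L(z\cdot\mu)=T_0^\mu L(z\cdot 0)$ and using biadjunction together with exactness of both functors, I would identify, for every $j$,
\begin{displaymath}
\operatorname{Ext}^j_{\mathcal{O}}\big(\Delta(y\cdot\mu),L(z\cdot\mu)\big)\;\cong\;\operatorname{Ext}^j_{\mathcal{O}}\big(T_\mu^0\Delta(y\cdot\mu),L(z\cdot 0)\big).
\end{displaymath}
This moves the entire computation into the regular block, at the price of understanding $T_\mu^0\Delta(y\cdot\mu)$.

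Next I would invoke the standard fact that $T_\mu^0\Delta(y\cdot\mu)$ carries a Verma flag whose sections are precisely $\Delta(yt\cdot 0)$, $t\in W_\mu$, each with multiplicity one; by Kostant's Lemma~\ref{lem1} these satisfy $l(yt)=l(y)+l(t)$. The long exact sequences attached to this flag express $\operatorname{Ext}^*(T_\mu^0\Delta(y\cdot\mu),L(z\cdot 0))$ in terms of the $\operatorname{Ext}^*(\Delta(yt\cdot 0),L(z\cdot 0))$. Passing to homological Euler characteristics yields only the additive relation
\begin{displaymath}
\sum_j(-1)^j\dim\operatorname{Ext}^j\big(T_\mu^0\Delta(y\cdot\mu),L(z\cdot 0)\big)=\sum_{t\in W_\mu}\sum_j(-1)^j\dim\operatorname{Ext}^j\big(\Delta(yt\cdot 0),L(z\cdot 0)\big),
\end{displaymath}
which records the alternating sum but neither the individual graded pieces nor the signs $(-1)^{l(t)}$ occurring in the statement.

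The actual content is therefore to lift the argument to graded category $\mathcal{O}$. By Koszulity each $\operatorname{Ext}^j_{\mathcal{O}}(\Delta(y\cdot 0),L(z\cdot 0))$ is pure, concentrated in a single internal degree tied to $j$, so that the defining series $\sum_j\dim\operatorname{Ext}^j\cdot q^{(l(z)-l(y)-j)/2}$ is exactly the graded Poincar\'e series remembering both gradings at once. The graded lift of the previous step is that $T_\mu^0\Delta(y\cdot\mu)$ admits a graded Verma flag with sections $\Delta(yt\cdot 0)\shift{-l(t)}$. Under Koszul purity the internal shift $\shift{-l(t)}$ converts, in the alternating count, into the sign $(-1)^{l(t)}$, while the same shift of the internal degree (reflecting $l(yt)=l(y)+l(t)$) supplies precisely the power of $q$ reconciling the two normalizations $q^{(l(z)-l(y)-j)/2}$ and $q^{(l(z)-l(yt)-j)/2}$. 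Summing over $t\in W_\mu$ then gives $P^\mu_{y,z}=\sum_{t\in W_\mu}(-1)^{l(t)}P_{yt,z}$; one recognises the right-hand side as the coefficient of the sign-twisted (antispherical) Hecke induction from $W_\mu$ to $W$, which is the conceptual origin of the alternating signs.

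The main obstacle is exactly this last step: producing the graded lifts of the translation functors, establishing the internal shifts $\shift{-l(t)}$ on the sections of the Verma flag, and using purity of $\operatorname{Ext}$ between standard and simple modules to turn those shifts into the signs $(-1)^{l(t)}$ with the matching powers of $q$. An ungraded computation recovers only an Euler characteristic and loses all sign information, so the Koszul-graded structure (equivalently, the antispherical Hecke-module combinatorics it categorifies) is indispensable for the full polynomial identity.
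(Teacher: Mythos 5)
First, a remark on the comparison itself: the paper does not prove Proposition~\ref{prop5.1-1}; it is quoted from \cite{soergel1989n-cohomolohy} and \cite{irving1990singular}. So your argument has to stand on its own, and it does not, for two reasons. The concrete error is your translation-functor input. You posit an exact functor $T_0^\mu$ with $T_0^\mu\Delta(x\cdot 0)=\Delta(x\cdot\mu)$ for \emph{all} $x\in W$ and $T_0^\mu L(z\cdot 0)=L(z\cdot\mu)\neq 0$ for all $z\in W^\mu$. No exact functor has both properties once $W_\mu\neq W$: every $L(z\cdot 0)$ with $z\in W^\mu$ is a composition factor of $\Delta(0)$ (since $\Delta(z\cdot 0)\subseteq\Delta(0)$), so exactness would force $T_0^\mu\Delta(0)=\Delta(\mu)$ to have length at least $|W^\mu|$, whereas $\Delta(\mu)$ is \emph{simple} because $\mu$ is antidominant. (Equivalently, an exact functor with an exact right adjoint preserves projectives, but $\Delta(0)$ is projective and $\Delta(\mu)$ is not.) The true statement, Proposition~\ref{proposition:translation}, keeps the simples indexed by the \emph{longest} coset representatives when the source block is parameterized by the dominant weight $0$; mixing that with the antidominant parameterization of $\mathcal{O}_\mu$ introduces a twist by $w_0$ everywhere, namely $T\Delta(xw_0\cdot 0)=\Delta(x\cdot\mu)$ and $TL(zw_0\cdot 0)=L(z\cdot\mu)$ for $z\in W^\mu$. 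Your formulas become correct only if you reparameterize the regular block antidominantly, replacing $x\cdot 0$ by $x\cdot(w_0\cdot 0)$ throughout (which is also the convention in which Vogan's theorem literally produces the polynomials $P_{yt,z}$ of the statement); this $w_0$-bookkeeping, cf.\ $W_\mu=w_0W_\lambda w_0$ and $W^\mu w_0=\tilde{W}^\lambda$ in Subsection~\ref{s5.1}, is exactly what the indexing of the proposition hinges on, and as written your first step is false.

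The larger gap is that the decisive step is never performed, as you yourself acknowledge by calling it ``the main obstacle.'' After the (corrected) adjunction and Verma-flag reduction, the ungraded argument gives only the Euler-characteristic identity; combined with the known parity constraints on these $\operatorname{Ext}$-groups it yields at best the specialization $q=1$ of the proposition, never the polynomial identity. What remains is precisely the content: constructing graded lifts of $T_0^\mu$ and $T_\mu^0$ (including the grading-shift discrepancy between the left and the right adjoint of a graded lift), proving that the graded Verma flag of $T_\mu^0\Delta(y\cdot\mu)$ has sections $\Delta\bigl(yt\cdot(w_0\cdot 0)\bigr)\langle -l(t)\rangle$, and establishing purity (concentration of graded $\operatorname{ext}^j$ from a standard module to a simple in a single internal degree) in both the regular \emph{and} the singular block. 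None of this is routine; it is essentially Soergel's purity theorem and Irving's Loewy-filtration analysis, i.e.\ the very results the paper cites in place of a proof. In sum, your proposal correctly identifies the shape of a proof, but its first step is false as stated and its crucial step is missing.
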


Let us return to our dominant parameterization by $\lambda$. 
If we put $\mu = w_0 \cdot \lambda$, then it is easy to see 
that $W_\mu = w_0 W_\lambda w_0$, and $W^\mu w_0 = \tilde{W}^\lambda$. 
For $w,x \in \tilde{W}^\lambda$ with $w \leq x$ the 
relevant KLV-polynomial is

\begin{equation}\label{equation:dom_KLV}
P^{w_0 \cdot \lambda}_{x w_0,w w_0} (q) = \sum_{j} \dim {\operatorname{Ext}}^j_{\mathcal{O}} ( {\Delta}(x \cdot \lambda) , L(w \cdot \lambda) ) \cdot q^\frac{l(x)-l(w)-j}{2}.
\end{equation}

\begin{theorem}\label{theorem:singular_BGG_exactness}
For $w \in \tilde{W}^\lambda$, the following statements are equivalent:
\begin{enumerate}[$($a$)$]
\item \label{item:exactness1}
The singular BGG complex \eqref{equation:singular_BGG} 
of $L(w \cdot \lambda)$ is exact.
\item \label{item:exactness2}
For all $i \geq 0$, we have
\begin{equation*}
H^i \left( {\mathfrak{n}}^+, L(w \cdot \lambda) \right) = \bigoplus_{x \in X^i_w} {\mathbb{C}}_{x \cdot \lambda},
\end{equation*}
where ${\mathbb{C}}_{x \cdot \lambda}$ is the $1$-dimensional ${\mathfrak{h}}$-module with weight $x \cdot \lambda$.
\item \label{item:exactness3}
For all $i \geq 0$ and $x \in \tilde{W}^\lambda$, we have
\begin{equation*}
\dim {\operatorname{Ext}}^i_{\mathcal{O}} \left({\Delta}(x \cdot \lambda) , L(w \cdot \lambda) \right) = \begin{cases} 1 & \colon x \in X^i_w \\ 0 & \colon \text{otherwise} .\end{cases}
\end{equation*}
\item \label{item:exactness4}
For all $x \in \tilde{W}^\lambda$ with $x \geq w$, we have $P^{w_0 \cdot \lambda}_{x w_0,w w_0} (q) = |\mu^\lambda(w,x)|$.
\end{enumerate}
\end{theorem}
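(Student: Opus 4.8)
The plan is to prove the cycle of equivalences $\eqref{item:exactness1}\Leftrightarrow\eqref{item:exactness2}\Leftrightarrow\eqref{item:exactness3}\Leftrightarrow\eqref{item:exactness4}$, exploiting the standard dictionary between BGG resolutions, Lie algebra cohomology, and $\operatorname{Ext}$-groups in $\mathcal{O}$. The conceptual engine is that the singular BGG complex \eqref{equation:singular_BGG} computes, by construction, a well-understood complex, and its exactness is equivalent to the homology of that complex collapsing to the expected answer.

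\textbf{The equivalence $\eqref{item:exactness1}\Leftrightarrow\eqref{item:exactness3}$.} First I would observe that, by Theorem~\ref{theorem:singular_BGG}\eqref{theorem:singular_BGG.1}, \eqref{equation:singular_BGG} is a complex of Verma modules whose $i$-th term is $\bigoplus_{x\in X^i_w}\Delta(x\cdot\lambda)$. The idea is to apply the functor $\operatorname{Hom}_{\mathcal{O}}(-,L(w\cdot\lambda))$ and use the standard fact that $\operatorname{Ext}^j_{\mathcal{O}}(\Delta(x\cdot\lambda),L(w\cdot\lambda))$ vanishes except possibly in a single homological degree, so that the hypercohomology spectral sequence degenerates. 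Exactness of the complex forces the terms of a minimal projective-type resolution to match the $X^i_w$, and conversely the dimension count in \eqref{item:exactness3} is exactly what is needed to certify that the maps in \eqref{equation:singular_BGG} have the correct ranks at each spot; here I would lean on the fact that the multiplicity $|\mu^\lambda(w,x)|\in\{0,1\}$ is always $1$ on $X_w$ by Lemma~\ref{lemma:mu_singleton}, so each Verma module appears with multiplicity one.

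\textbf{The equivalences $\eqref{item:exactness2}\Leftrightarrow\eqref{item:exactness3}\Leftrightarrow\eqref{item:exactness4}$.} For $\eqref{item:exactness3}\Leftrightarrow\eqref{item:exactness4}$ I would simply read off formula \eqref{equation:dom_KLV}: the polynomial $P^{w_0\cdot\lambda}_{xw_0,ww_0}(q)$ is a generating function whose coefficients are exactly the $\dim\operatorname{Ext}^j_{\mathcal{O}}(\Delta(x\cdot\lambda),L(w\cdot\lambda))$, so requiring the total $\operatorname{Ext}$-dimension to equal $|\mu^\lambda(w,x)|$ (with all the Ext concentrated in the single degree $i=l(x)-l(w)$ dictated by $x\in X^i_w$) is precisely the numerical content of \eqref{item:exactness4} once one checks that a nonzero $P$ forced to equal a nonnegative integer $|\mu^\lambda(w,x)|\le 1$ leaves no room for higher-degree terms. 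For $\eqref{item:exactness2}\Leftrightarrow\eqref{item:exactness3}$ I would invoke the classical relation between $\mathfrak{n}^+$-cohomology and $\operatorname{Ext}$-groups against Verma modules: $H^i(\mathfrak{n}^+,L(w\cdot\lambda))_{x\cdot\lambda}\cong\operatorname{Ext}^i_{\mathcal{O}}(\Delta(x\cdot\lambda),L(w\cdot\lambda))$ as $\mathfrak{h}$-modules (the Delorme/BGG-type identity), so that the weight-space decomposition in \eqref{item:exactness2} translates term-by-term into the $\operatorname{Ext}$-vanishing and one-dimensionality in \eqref{item:exactness3}.

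\textbf{Main obstacle.} The step I expect to be most delicate is controlling the degree in which the $\operatorname{Ext}$-groups live and proving that exactness genuinely forces concentration in the single cohomological degree $l(x)-l(w)$ rather than merely controlling the alternating (Euler-characteristic) sum. The Euler characteristic of the complex always matches the M\"obius data, but exactness is a stronger statement; I would handle this by a minimality argument for the complex \eqref{equation:singular_BGG} together with the degree bound $\deg P^\mu_{y,z}\le (l(z)-l(y)-1)/2$ recorded after Proposition~\ref{prop5.1-1}, which pins each contribution to a single parity and rules out cancellation. Establishing that this degree bound, combined with $|\mu^\lambda(w,x)|\le 1$, forces the clean one-line answer is where the real work lies.
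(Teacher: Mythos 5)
Your handling of \eqref{item:exactness2}$\Leftrightarrow$\eqref{item:exactness3} (via the Delorme/Schmid identification of weight spaces of $\mathfrak{n}^+$-cohomology with $\operatorname{Ext}$ against Vermas) and of \eqref{item:exactness3}$\Leftrightarrow$\eqref{item:exactness4} (unwinding \eqref{equation:dom_KLV}) is correct and coincides with the paper. The gap is in your treatment of \eqref{item:exactness1}$\Leftrightarrow$\eqref{item:exactness3}, which is where the content of the theorem lies. The ``standard fact'' you invoke --- that $\operatorname{Ext}^j_{\mathcal{O}}(\Delta(x\cdot\lambda),L(w\cdot\lambda))$ is concentrated in a single homological degree --- is false in general: by \eqref{equation:dom_KLV} it is \emph{equivalent} to the KLV polynomial $P^{w_0\cdot\lambda}_{xw_0,ww_0}$ being a monomial, i.e.\ essentially to condition \eqref{item:exactness4}, the very thing being characterized. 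The paper's own $B_3$ example, with $P_{w,v}(q)=1+q$, has $\operatorname{Ext}$ nonvanishing in two distinct degrees. So your spectral-sequence degeneration presupposes the conclusion; and even granting it, the hypercohomology spectral sequence you would form from \eqref{equation:singular_BGG} has $E_1$-terms involving $\operatorname{Ext}^q$ \emph{between Verma modules} (the complex is not projective), which are not under control. The mechanism that actually proves \eqref{item:exactness1}$\Rightarrow$\eqref{item:exactness2}, and which is absent from your proposal, is that Verma modules are free over $U(\mathfrak{n}^-)$: an exact BGG complex is therefore a free $U(\mathfrak{n}^-)$-resolution of $L(w\cdot\lambda)$, and applying $\mathbb{C}\otimes_{U(\mathfrak{n}^-)}-$ kills every differential (each is a direct sum of non-surjective inclusions between distinct Vermas), yielding $H_i(\mathfrak{n}^-,L(w\cdot\lambda))\cong H^i(\mathfrak{n}^+,L(w\cdot\lambda))=\bigoplus_{x\in X^i_w}\mathbb{C}_{x\cdot\lambda}$ on the nose.

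The second, and harder, gap is the converse direction \eqref{item:exactness2}/\eqref{item:exactness3}$\Rightarrow$\eqref{item:exactness1}. Knowing the dimensions of all $\operatorname{Ext}$-groups does not by itself certify that the \emph{particular} complex \eqref{equation:singular_BGG}, with its chosen differentials, is exact; your appeal to ``correct ranks at each spot'' and a ``minimality argument'' is not an argument. The paper resolves this by quoting a genuine theorem: condition \eqref{item:exactness2} says exactly that $L(w\cdot\lambda)$ is a generalized Kostant module in the sense of \cite{enright2004resolutions}, and Theorem~2.8 of that paper then forces exactness of the BGG complex. Your proposed substitute --- the degree bound $\deg P^\mu_{y,z}\le (l(z)-l(y)-1)/2$ --- only yields parity information (all nonzero $\operatorname{Ext}^j$ occur for $j\equiv l(x)-l(w)\pmod 2$, and $j>0$ for $x\neq w$), which, as the $1+q$ example again shows, does not force concentration in a single degree, let alone exactness of a specific complex. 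To repair the proof you would need to replace both halves: route \eqref{item:exactness1}$\Rightarrow$\eqref{item:exactness3} through $\mathfrak{n}^-$-homology as above (then apply \cite[Lemma~5.13]{schmid1981vanishing}), and cite or reprove the Enright--Hunziker criterion for the converse.
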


\begin{proof}
Having Theorem \ref{theorem:singular_BGG}, the proof now follows  
\cite[3.4. and 4.3.]{boe2009kostant}. 
We outline it here, for the sake of completeness.

Assume claim~\eqref{item:exactness1}. 
The Killing form induces an ${\mathfrak{h}}$-isomorphism
\begin{displaymath}
H^i \left( {\mathfrak{n}}^+, L(w \cdot \lambda) \right) 
\cong H_i \left( {\mathfrak{n}}^-, L(w \cdot \lambda) \right) , 
\end{displaymath}
and the latter is the $i$-th right derived functor of 
${\mathbb{C}} \otimes_{U({\mathfrak{n}}^-)} -$. It can 
be easily calculated on $L(w \cdot \lambda)$ by using 
\eqref{equation:singular_BGG} as a free $U({\mathfrak{n}}^-)$ 
resolution, and claim~\eqref{item:exactness2} follows.

Assume claim~\eqref{item:exactness2}. Then $L(w \cdot \lambda)$ 
is a generalized Kostant module in the sense of 
\cite{enright2004resolutions}, and their Theorem 2.8. 
shows that \eqref{equation:singular_BGG} must be exact, proving 
claim~\eqref{item:exactness1}.

Claims~\eqref{item:exactness2} and \eqref{item:exactness3} 
are equivalent by \cite[Lemma~5.13]{schmid1981vanishing}, where it is
shown that any $\nu$-weight space of 
$H^i \left( {\mathfrak{n}}^+, L(w \cdot \lambda) \right)$ is, in fact, 
isomorphic to ${\operatorname{Ext}}^i_{\mathcal{O}} 
\left({\Delta}(\nu), L(w \cdot \lambda) \right)$. 

Claims~\eqref{item:exactness3} and \eqref{item:exactness4} are equivalent by \eqref{equation:dom_KLV} and the definition of $X^i_w$.
\end{proof}

\subsection{Kostant modules}\label{s5.2}

In \cite{boe2009kostant}, it is shown that the BGG complex of a 
simple module in the parabolic category ${\mathcal{O}}^\lambda_0$ 
is exact precisely when the simple module is a Kostant module. 
It turns out that this is not true in ${\mathcal{O}}_\lambda$; 
a simple module in ${\mathcal{O}}_\lambda$ whose BGG complex 
is exact does not have to be a Kostant module in the sense of 
\cite[Subsection~3.3]{boe2009kostant}. The problem is that our $X_w$ 
does not have to be an interval in $\tilde{W}^\lambda$. The 
smallest example can be found in type $A_3$. Let 
$\lambda+\rho=(2,1,1,0)$, then $S=\{\alpha_2\}$ and take
$w= s_3 s_1 s_2$. The posets 
$[w,w_0] \supseteq [w,w_0] \cap \tilde{W}^\lambda \supseteq X_w$
are shown in Figure~\ref{figure:BGG_notKostant}. Elements of 
$\tilde{W}^\lambda$ are displayed in bold font, those 
in $X_w$ are underlined, and the arrows follow the Bruhat order.

\begin{figure}[ht]
\begin{tikzpicture}[>=latex,line join=bevel,]
\node (node_7) at (96.0bp,150.0bp) [draw,draw=none] {$\bf \underline{s_{3}s_{1}s_{2}}$};
  \node (node_6) at (157.0bp,102.0bp) [draw,draw=none] {$\bf \underline{s_{3}s_{1}s_{2}s_{1}}$};
  \node (node_5) at (96.0bp,102.0bp) [draw,draw=none] {$\bf \underline{s_{1}s_{2}s_{3}s_{2}}$};
  \node (node_4) at (35.0bp,102.0bp) [draw,draw=none] {$\bf \underline{s_{2}s_{3}s_{1}s_{2}}$};
  \node (node_3) at (96.0bp,54.0bp) [draw,draw=none]
{$\bf \underline{s_{2}s_{3}s_{1}s_{2}s_{1}}$};
  \node (node_2) at (166.0bp,54.0bp) [draw,draw=none]
{$s_{1}s_{2}s_{3}s_{2}s_{1}$};
  \node (node_1) at (26.0bp,54.0bp) [draw,draw=none]
{$\bf \underline{s_{1}s_{2}s_{3}s_{1}s_{2}}$};
  \node (node_0) at (96.0bp,6.0bp) [draw,draw=none]
{$\bf s_{1}s_{2}s_{3}s_{1}s_{2}s_{1}$};
  \draw [black,<-] (node_1) ..controls (28.396bp,66.779bp) and
(30.351bp,77.207bp)  .. (node_4);
  \draw [black,<-] (node_0) ..controls (96.0bp,18.707bp) and
(96.0bp,28.976bp)  .. (node_3);
  \draw [black,<-] (node_1) ..controls (45.851bp,67.612bp) and
(64.815bp,80.616bp)  .. (node_5);
  \draw [black,<-] (node_5) ..controls (96.0bp,114.71bp) and
(96.0bp,124.98bp)  .. (node_7);
  \draw [black,<-] (node_4) ..controls (52.116bp,115.47bp) and
(68.198bp,128.12bp)  .. (node_7);
  \draw [black,<-] (node_3) ..controls (113.12bp,67.468bp) and
(129.2bp,80.123bp)  .. (node_6);
  \draw [black,<-] (node_6) ..controls (139.88bp,115.47bp) and
(123.8bp,128.12bp)  .. (node_7);
  \draw [black,<-] (node_0) ..controls (115.85bp,19.612bp) and
(134.82bp,32.616bp)  .. (node_2);
  \draw [black,<-] (node_3) ..controls (78.884bp,67.468bp) and
(62.802bp,80.123bp)  .. (node_4);
  \draw [black,<-] (node_2) ..controls (163.6bp,66.779bp) and
(161.65bp,77.207bp)  .. (node_6);
  \draw [black,<-] (node_0) ..controls (76.149bp,19.612bp) and
(57.185bp,32.616bp)  .. (node_1);
  \draw [black,<-] (node_2) ..controls (146.15bp,67.612bp) and
(127.18bp,80.616bp)  .. (node_5);
\end{tikzpicture}
\caption{BGG resolution of $L(s_3s_1s_2 \cdot \lambda)$ with $\lambda+\rho = (2,1,1,0)$ in type $A_3$, which is not parameterized by an interval in $\tilde{W}^\lambda$}
\label{figure:BGG_notKostant}
\end{figure}
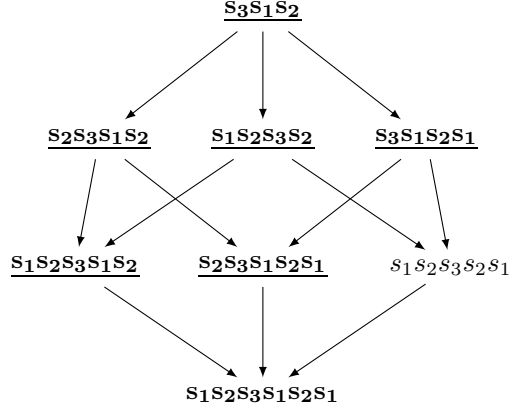 

The phenomenon is related to the fact that certain generalized Verma 
module in the regular parabolic block ${\mathcal{O}}^\lambda$ does 
not have  simple socle. This example suggests that their definition 
of Kostant modules might not be the ``correct one'' for singular blocks.
We will return to this question later in Subsection~\ref{subsection:Kostant}.

\section{Koszul duals of modules with the BGG resolution}\label{s6}

\subsection{Parabolic category ${\mathcal{O}}^\lambda$}\label{s6.1}

A dominant, integral, possibly singular weight $\lambda$ uniquely determines a parabolic subgroup
${\mathfrak{p}} \subseteq {\mathfrak{g}}$ containing 
${\mathfrak{b}}$, which, in turn, defines the parabolic 
category ${\mathcal{O}}^\lambda$, that is, the full subcategory 
of ${\mathcal{O}}$ consisting of ${\mathfrak{p}}$-locally 
finite modules. Note that ${\mathcal{O}}^0 = {\mathcal{O}}$. 
We denote by ${\mathcal{O}}^\lambda_\nu$ the block of 
${\mathcal{O}}^\lambda$ corresponding to the central
character determined by the weight $\nu$. 
In particular, consider the regular block ${\mathcal{O}}^\lambda_0$. 
Generalized Verma modules ${\Delta}^\lambda(w \cdot 0)$, 
and simple modules $L(w \cdot 0)$ in ${\mathcal{O}}^\lambda_0$ 
are parameterized by $w$ running over the set $^\lambda W$ of the minimal 
representatives of the cosets $W_\lambda \backslash W$. It is easy to see that 
the map $w \mapsto w^{-1} w_0$ gives a bijection 
$\tilde{W}^\lambda \stackrel{\sim}{\longrightarrow} {}^\lambda W$. Denote
\begin{displaymath}
\hat{w} := w^{-1}w_0. 
\end{displaymath}

\begin{remark}\label{remark:w0ww0}
The map $w \mapsto \hat{\hat{w}} = w_0 w w_0$ gives rise to an automorphism 
of the Dynkin diagram, and hence of the Coxeter system. It is 
equal to the identity in all simple Coxeter types except $A_n$ and 
odd rank $D_n$, when it is the unique non-identity 
automorphism of the Dynkin diagram. In particular, the map
preserves for KL- and KLV-polynomials, multiplicities and 
${\operatorname{Ext}}$-groups from generalized Verma modules to simple modules.
\end{remark}

\subsection{Graded version of category ${\mathcal{O}}$}
\label{subsection:graded_O}

In what follows, by {\em graded} we mean {\em $\mathbb{Z}$-graded}.

For a block ${\mathcal{O}}^\lambda_\nu$, denote by $E^\lambda_\nu$ 
the endomorphism algebra of a minimal projective generator in 
${\mathcal{O}}^\lambda_\nu$. The category ${\mathcal{O}}^\lambda_\nu$ 
is equivalent to $E^\lambda_\nu\text{-}\mathrm{mod}$, the category of 
finitely generated $E^\lambda_\nu$-modules. In 
\cite{beilinson1996koszul}, it is proved that $E^0_\lambda$ 
and $E^\lambda_0$ have a Koszul grading, and, moreover, 
that they are Koszul dual to each other. This was 
generalized to arbitrary ${\mathcal{O}}^\lambda_\nu$ in 
\cite{backelin1999koszul}, see also \cite{mazorchuk2009applications}.

For a graded algebra $E$, denote denote by $E$-${\operatorname{gmod}}$ 
the category of \emph{graded} finitely generated $E$-modules, 
where for morphisms we take only homogeneous homomorphisms of 
degree zero. All structural modules, that is simple modules, 
(generalized) Verma modules and their duals, indecomposable 
projectives, injectives and tilting modules in both 
${\mathcal{O}}^0_\lambda$ and ${\mathcal{O}}^\lambda_0$ 
admit a graded lift to $E^0_\lambda$-${\operatorname{gmod}}$ 
and $E^\lambda_0$-${\operatorname{gmod}}$, respectively, 
and this lift is unique up to a shift in grading. For 
modules with simple head, the grading is given by their 
radical filtration. In particular, this applies to simple 
modules, generalized Verma modules, and indecomposable 
projectives. We will use the same notation for their graded 
lifts, assuming that their heads have degree $0$. In the 
latter case, their radicals are contained in positive degrees.

For a graded module $\displaystyle M=\bigoplus_{i \in {\mathbb{Z}}} M_i$, 
we denote by $M\shift{k}$ the same module with the 
shifted grading: $M\shift{k}_i = M_{i+k}$. Our BGG complex 
\eqref{equation:singular_BGG} admits a {\em graded lift}, that is
it can be lifted to $E^0_\lambda$-${\operatorname{gmod}}$, and, 
moreover, this lift is \emph{linear} in the sense that the $i$-th term of 
the complex is a direct sum of ${\Delta}(x \cdot \lambda) \shift{i}$, 
where $x$ varies. The analogous statements are true for BGG 
complexes in regular parabolic blocks.

\begin{remark}
It can be shown that any minimal resolution of a simple module 
in ${\mathcal{O}}_\lambda$ by direct sums of generalized Verma 
modules lifts to a linear resolution in 
$E^0_\lambda$-${\operatorname{gmod}}$. This is true as
${\operatorname{Hom}}$-spaces between Verma modules 
are $1$-dimensional and gradable.
\end{remark}

For our further discussion we will use the following statement which is
\cite[Proposition~1.3.1]{beilinson1996koszul}:

\begin{proposition}\label{proposition:BGS_ext_mult}
For $w, x \in \tilde{W}^\lambda$ and for all $i \geq 0$, we have:
\begin{enumerate}[$($a$)$]
\item\label{item:BGS_ext_mult1}
$\dim {\operatorname{Ext}}_{\mathcal{O}}^i 
\left({\Delta}(x \cdot \lambda), L(w \cdot \lambda)\right) = 
\left[ {\operatorname{rad}}^i {\Delta}^\lambda( \hat{x} \cdot 0) 
\colon L(\hat{w} \cdot 0) \right]$,
\item\label{item:BGS_ext_mult2}
$\dim {\operatorname{Ext}}_{\mathcal{O}^\lambda}^i 
\left({\Delta}^\lambda( \hat{x} \cdot 0), L(\hat{w} \cdot 0) \right) 
= \left[ {\operatorname{rad}}^i {\Delta}(x \cdot \lambda) 
\colon L(w \cdot \lambda) \right]$,
\end{enumerate}
where ${\operatorname{rad}}^i$ means the $i$-th layer of the radical filtration.
\end{proposition}

We will also need the following graded version of the BGG reciprocity
(which is proved by the same argument as the classical result):

\begin{proposition}[Graded BGG reciprocity]
\label{proposition:BGG_reciprocity}
For all weights $\nu, \xi$ and all $i \geq 0$, we have
\begin{displaymath}
\left(  P(\xi) \colon {\Delta}(\nu) \shift{i} \right) = 
\left[{\Delta}(\nu) \colon L(\xi) \shift{i} \right] = 
\left[{\operatorname{rad}}^i {\Delta}(\nu) \colon L(\xi) \right] . 
\end{displaymath}
An analogous statement holds in ${\mathcal{O}}^\lambda$.
\end{proposition}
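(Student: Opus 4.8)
The plan is to establish the graded BGG reciprocity by mimicking the standard proof, replacing dimension counts with graded (Laurent-polynomial-valued) multiplicities. The two equalities to prove are
$$
\left( P(\xi) \colon \Delta(\nu)\shift{i} \right) = \left[ \Delta(\nu) \colon L(\xi)\shift{i} \right] = \left[ \operatorname{rad}^i \Delta(\nu) \colon L(\xi) \right].
$$
The second equality is essentially a definition: since the graded lift of $\Delta(\nu)$ is chosen so that its head sits in degree $0$ and its radical filtration coincides with its grading filtration (as recalled in Subsection~\ref{subsection:graded_O} for modules with simple head), a composition factor $L(\xi)$ occurring in graded degree $i$ is precisely a composition factor appearing in the $i$-th radical layer. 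So the main content is the first equality, which is the genuine reciprocity statement.

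First I would set up the graded Cartan and decomposition data. In the Koszul-graded setting, the graded Verma multiplicities $[\Delta(\nu)\colon L(\xi)\shift{i}]$ assemble into a decomposition matrix $D$, and the graded projective-by-Verma multiplicities $(P(\xi)\colon\Delta(\nu)\shift{i})$ assemble into a matrix $C$, both with entries in $\mathbb{Z}[q,q^{-1}]$. The goal is to show $C = D^{t}$ (up to the obvious indexing convention), so that the graded filtration multiplicity of $\Delta(\nu)$ in $P(\xi)$ equals the graded composition multiplicity of $L(\xi)$ in $\Delta(\nu)$. The standard route is to use that every projective in the highest-weight category $\mathcal{O}_\lambda$ (equivalently $E^0_\lambda$-$\operatorname{gmod}$) admits a graded Verma flag, so that in the graded Grothendieck group one may write
$$
[P(\xi)] = \sum_{\nu,i} \left( P(\xi)\colon \Delta(\nu)\shift{i} \right) [\Delta(\nu)\shift{i}].
$$

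The key step I would carry out is the graded homological identity computing filtration multiplicities. Because $\mathcal{O}_\lambda$ is a highest-weight category, standard and costandard objects satisfy $\operatorname{Ext}^{>0}(\Delta(\nu), \nabla(\xi)) = 0$ and $\operatorname{Hom}(\Delta(\nu),\nabla(\xi))$ is one-dimensional exactly when $\nu = \xi$. In the graded lift this refines to the statement that the graded Hom-space $\operatorname{hom}(\Delta(\nu),\nabla(\xi)\shift{-i})$ detects precisely the multiplicity $(P(\xi)\colon\Delta(\nu)\shift{i})$ via the dual costandard objects. I would then apply the exactness of the functor $\operatorname{Hom}(P(\xi),-)$ to the graded composition series of $\Delta(\nu)$, which yields
$$
\left( P(\xi)\colon\Delta(\nu)\shift{i} \right) = \dim \operatorname{hom}\!\left( P(\xi), \nabla(\nu)\shift{-i} \right) = \left[ \Delta(\nu)\colon L(\xi)\shift{i}\right],
$$
where the last equality uses simple-preserving duality to pass between $\nabla(\nu)$ and $\Delta(\nu)$ and the fact that $P(\xi)$ is the projective cover of $L(\xi)$ graded in degree $0$. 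This is the point where gradedness must be tracked carefully: one must check that the grading shift $\shift{i}$ on the Verma-flag side matches the degree $i$ at which $L(\xi)$ appears, rather than some shifted convention.

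The analogous statement in $\mathcal{O}^\lambda$ follows verbatim, replacing Verma modules by generalized Verma modules $\Delta^\lambda$ and using that $\mathcal{O}^\lambda_0$ is again a graded highest-weight category with Koszul grading by \cite{backelin1999koszul}. The main obstacle I anticipate is bookkeeping of the grading shifts: the classical BGG reciprocity is an equality of integers, but here every step must be promoted to an equality of Laurent polynomials, and a single misplaced shift $\shift{\pm 1}$ would invalidate the linearity statement used for the BGG complex. Concretely, the delicate point is verifying that the graded lift of $P(\xi)$ is normalized so that its head is in degree $0$, and that the duality intertwining $\Delta$ and $\nabla$ reverses the grading compatibly, so that the graded multiplicities on the two sides are genuinely indexed by the same $i$. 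Once the normalization is fixed consistently, the identity is the degreewise refinement of the classical argument and requires no new homological input beyond the Koszulity already invoked.
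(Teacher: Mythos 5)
Your proposal is correct and coincides with the paper's own (implicit) proof: the paper offers no argument beyond the parenthetical remark that the statement ``is proved by the same argument as the classical result,'' and what you write out is exactly that classical Verma-flag/Ext-orthogonality/duality argument, promoted degreewise to the graded setting, together with the observation (already recorded in the paper's Subsection~\ref{subsection:graded_O}) that the grading filtration of a graded lift with simple head coincides with its radical filtration, which gives the second equality. The only caveat, which you yourself flag, is the shift-sign bookkeeping in the middle term, and the paper is no more careful about this convention than you are.
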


\subsection{(Generalized-)Verma flags of projectives}\label{s6.3}

By Koszul duality, a simple module 
$L(w \cdot \lambda) \in {\mathcal{O}}_\lambda$, where
$w \in \tilde{W}^\lambda$, corresponds to the indecomposable 
projective module $P^\lambda(\hat{w} \cdot 0) \in {\mathcal{O}}^\lambda_0$, 
where $\hat{w}=w^{-1}w_0 \in {}^\lambda W$. 
A simple module $L(\hat{w} \cdot 0) \in {\mathcal{O}}^\lambda_0$ 
corresponds to the indecomposable projective module 
$P(w \cdot 0) \in {\mathcal{O}}_\lambda$. We note that, in the
classical Koszul duality, a simple module 
corresponds to an indecomposable injective module in the Koszul dual picture. 
Therefore, we need to compose the classical Koszul duality with the usual
simple preserving duality on ${\mathcal{O}}$ to get a projective module. 
This is  related to Remark~\ref{remark:w0ww0}.

We want to see how the exactness of the BGG complex of a simple 
module in one category is reflected on its Koszul dual projective object.

\begin{proposition}\label{propnew8615}
For $w \in \tilde{W}^\lambda$, the following statements are equivalent:
\begin{enumerate}[$($a$)$]
\item \label{item:proj_mult_free00}
The BGG complex of $L(\hat{w} \cdot 0) \in {\mathcal{O}}^\lambda_0$ is exact.
\item\label{item:proj_mult_free01}
The projective $P(w \cdot \lambda) \in {\mathcal{O}}_\lambda$ has a multiplicity-free Verma flag.
\item\label{item:proj_mult_free02}
$\left[ {\Delta}(\lambda) \colon L(w \cdot \lambda) \right] = 1$.
\item\label{item:proj_mult_free03}
${\operatorname{End}}(P(w \cdot \lambda))$ is commutative.
\end{enumerate}
\end{proposition}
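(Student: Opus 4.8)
The plan is to use graded BGG reciprocity (Proposition~\ref{proposition:BGG_reciprocity}) as a dictionary between Verma-flag multiplicities and composition multiplicities. For $x \in \tilde{W}^\lambda$ I write $m_x := ( P(w\cdot\lambda) : \Delta(x\cdot\lambda) ) = [ \Delta(x\cdot\lambda) : L(w\cdot\lambda) ]$, the equality being Proposition~\ref{proposition:BGG_reciprocity}. Since $w_0^\lambda \in W_\lambda$ fixes $\lambda$ under the dot-action, $\Delta(\lambda) = \Delta(w_0^\lambda\cdot\lambda)$ is the Verma module attached to the minimal element of $\tilde{W}^\lambda$; thus condition~\eqref{item:proj_mult_free01} reads ``$m_x\le 1$ for all $x$'', while~\eqref{item:proj_mult_free02} reads ``$m_{w_0^\lambda}=1$''. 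I would establish \eqref{item:proj_mult_free01}$\Leftrightarrow$\eqref{item:proj_mult_free02}, then \eqref{item:proj_mult_free00}$\Leftrightarrow$\eqref{item:proj_mult_free01}, and finally \eqref{item:proj_mult_free01}$\Leftrightarrow$\eqref{item:proj_mult_free03}.

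For \eqref{item:proj_mult_free01}$\Leftrightarrow$\eqref{item:proj_mult_free02}, the key point is that $\Delta(\lambda)$ is the maximal Verma module in ${\mathcal{O}}_\lambda$: since $\lambda$ is dominant, for every $x\in\tilde{W}^\lambda$ there is a nonzero, hence injective, homomorphism $\Delta(x\cdot\lambda)\hookrightarrow\Delta(\lambda)$. Comparing composition factors across this embedding yields the monotonicity $m_x = [\Delta(x\cdot\lambda):L(w\cdot\lambda)] \le [\Delta(\lambda):L(w\cdot\lambda)] = m_{w_0^\lambda}$ for all $x$. Taking $x=w$ shows that $L(w\cdot\lambda)$ is a composition factor of $\Delta(\lambda)$, so $m_{w_0^\lambda}\ge 1$ always; this gives \eqref{item:proj_mult_free01}$\Rightarrow$\eqref{item:proj_mult_free02}. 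Conversely, if $m_{w_0^\lambda}=1$, the monotonicity forces $m_x\le 1$ for all $x$, which is~\eqref{item:proj_mult_free01}.

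For \eqref{item:proj_mult_free00}$\Leftrightarrow$\eqref{item:proj_mult_free01} I would pass to the Koszul dual block through Proposition~\ref{proposition:BGS_ext_mult}\eqref{item:BGS_ext_mult2}, which identifies $\dim\operatorname{Ext}^i_{{\mathcal{O}}^\lambda}\!\left(\Delta^\lambda(\hat{x}\cdot 0), L(\hat{w}\cdot 0)\right)$ with $[\operatorname{rad}^i\Delta(x\cdot\lambda):L(w\cdot\lambda)]$, and hence, summing over $i$, the total $\operatorname{Ext}$-dimension with $m_x$. The parabolic counterpart of Theorem~\ref{theorem:singular_BGG_exactness}, that is, the Kostant-module criterion of \cite{boe2009kostant}, says that the BGG complex of $L(\hat{w}\cdot 0)$ is exact precisely when each of these $\operatorname{Ext}$-groups is at most one-dimensional and vanishes outside the single cohomological degree dictated by the length function. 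Because the complex admits a linear graded lift (Subsection~\ref{subsection:graded_O}), the ``correct degree'' and ``correct index set'' requirements become automatic once the total multiplicities are bounded by one: a nonzero $m_x\le 1$ occupies exactly one radical layer, and the Euler-characteristic bookkeeping behind the definition of $X_w$ matches it with the non-vanishing of $\mu^\lambda$. Thus exactness is equivalent to $m_x\le 1$ for all $x$, i.e.\ to~\eqref{item:proj_mult_free01}. Here the singular multiplicities $m_x$ genuinely may vanish for some $x\le w$, and it is exactly this that allows exactness to occur.

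Finally, for \eqref{item:proj_mult_free01}$\Leftrightarrow$\eqref{item:proj_mult_free03} I would start from $\dim\operatorname{End}\!\left(P(w\cdot\lambda)\right) = [P(w\cdot\lambda):L(w\cdot\lambda)] = \sum_x m_x^2$, again via Proposition~\ref{proposition:BGG_reciprocity}, so that~\eqref{item:proj_mult_free01} amounts to $\dim\operatorname{End}(P(w\cdot\lambda)) = \sum_x m_x$. To upgrade this count to commutativity I would use the graded Koszul structure of \cite{beilinson1996koszul,backelin1999koszul}, under which $\operatorname{End}(P(w\cdot\lambda))$ is isomorphic, as a graded algebra, to the Yoneda algebra $\operatorname{Ext}^\bullet_{{\mathcal{O}}^\lambda_0}\!\left(L(\hat{w}\cdot 0), L(\hat{w}\cdot 0)\right)$; a multiplicity-free Verma flag corresponds, by the equivalence \eqref{item:proj_mult_free00}$\Leftrightarrow$\eqref{item:proj_mult_free01} just proved, to $L(\hat{w}\cdot 0)$ having a linear resolution by generalized Verma modules, and one checks that this forces the self-extension algebra to be commutative. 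I expect the reverse direction \eqref{item:proj_mult_free03}$\Rightarrow$\eqref{item:proj_mult_free01} to be the main obstacle: the obvious endomorphisms witnessing a repeated Verma, namely those factoring through the projective Verma $\Delta(\lambda)=P(\lambda)$, compose to zero through $\operatorname{End}(\Delta(\lambda))={\mathbb{C}}$ and hence commute, so non-commutativity cannot be detected naively; one must instead exploit the full graded (Koszul) structure of $\operatorname{End}(P(w\cdot\lambda))$ to produce a non-commuting pair whenever some $m_x\ge 2$.
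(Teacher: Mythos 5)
Your reduction to the multiplicities $m_x := \left( P(w\cdot\lambda) \colon \Delta(x\cdot\lambda) \right) = \left[ \Delta(x\cdot\lambda) \colon L(w\cdot\lambda) \right]$ is the right framework, and your proof of \eqref{item:proj_mult_free01}$\Leftrightarrow$\eqref{item:proj_mult_free02} (monotonicity of $m_x$ along the embeddings $\Delta(x\cdot\lambda)\hookrightarrow\Delta(\lambda)$ for dominant $\lambda$, together with $m_w=1$) is correct and in fact more explicit than the paper, which settles this equivalence with the single phrase ``by BGG reciprocity''. The other two equivalences, however, contain genuine gaps.

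First, in \eqref{item:proj_mult_free01}$\Rightarrow$\eqref{item:proj_mult_free00} you assert that once $m_x\le 1$ the ``correct degree'' and ``correct index set'' requirements become \emph{automatic} from linearity of graded lifts and ``Euler-characteristic bookkeeping behind the definition of $X_w$''. They do not. Knowing $m_x=1$ only says that $L(w\cdot\lambda)$ occupies a single radical layer of $\Delta(x\cdot\lambda)$; it says nothing about \emph{which} layer, whereas the exactness criterion of \cite{boe2009kostant} on the parabolic side demands that ${\operatorname{Ext}}^i\left(\Delta^\lambda(\hat{x}\cdot 0), L(\hat{w}\cdot 0)\right)$ be concentrated precisely in degree $i=l(\hat{x})-l(\hat{w})$ and be \emph{exactly} one-dimensional there for every $\hat{x}\geq\hat{w}$ --- so you also need $m_x\geq 1$ for all $x\le w$, which you never establish (your own restatement of the criterion as ``at most one-dimensional'' already obscures this). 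Moreover, $X_w$ and $\mu^\lambda$ index the singular BGG complex in ${\mathcal{O}}_\lambda$, not the regular parabolic complex in statement \eqref{item:proj_mult_free00}, whose index set is the full Bruhat interval; they are not relevant here. The paper closes exactly this hole with a nontrivial input that your proposal lacks: by Irving's description of radical filtrations in singular blocks and the fact that non-zero KL-polynomials have constant term $1$, the factor $L(w\cdot\lambda)$ occurs in ${\operatorname{rad}}^{l(w)-l(x)}\Delta(x\cdot\lambda)$ \emph{exactly once} for every $x\le w$ in $\tilde{W}^\lambda$. This ``predictable occurrence'' simultaneously gives $m_x\geq 1$ and, under multiplicity-freeness, pins the unique occurrence to the correct layer; without it your step fails.

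Second, the equivalence with \eqref{item:proj_mult_free03} is not proved in your proposal at all: the forward direction rests on an unverified ``one checks that this forces the self-extension algebra to be commutative'', and you yourself flag \eqref{item:proj_mult_free03}$\Rightarrow$\eqref{item:proj_mult_free01} as ``the main obstacle'', offering only a description of what one ``must'' do. The paper does not argue this from scratch either; it invokes \cite[Theorem~7.1]{stroppel2003category_quivers}, which yields precisely the equivalence \eqref{item:proj_mult_free02}$\Leftrightarrow$\eqref{item:proj_mult_free03}. If you want to avoid that citation, you need an actual argument producing a non-commuting pair of endomorphisms from a repeated Verma factor (or ruling one out), and the Koszul--Yoneda reformulation you sketch does not obviously supply it.
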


\begin{proof}
From \cite[Subsection~3.4]{boe2009kostant}, 
Proposition~\ref{proposition:BGS_ext_mult}\eqref{item:BGS_ext_mult2} 
and the BGG reciprocity, it follows that 
claim~\eqref{item:proj_mult_free00} is equivalent to the 
following statement: 
For all $x \in \tilde{W}^\lambda$, $x \leq w$, and $i \geq 0$, we have:
\begin{equation}\label{equation:proj_mult_free00}
\left( P(w \cdot \lambda) \colon {\Delta}(x \cdot \lambda)\shift{i} \right) = 
\begin{cases} 
1, & \text{if }i = l(w)-l(x); \\ 0, & \text{otherwise.}
\end{cases}
\end{equation}
Obviously this implies claim~\eqref{item:proj_mult_free01}.

Assume claim~\eqref{item:proj_mult_free01}, and take 
$x \in \tilde{W}^\lambda$, $x \leq w$ and $i \geq 0$. Recall that 
$L(w \cdot \lambda)$ must always appear in 
${\operatorname{rad}}^{l(w)-l(x)} {\Delta}(x \cdot \lambda)$ 
exactly once (this follows, for example, from 
\cite[Section~1.4.2]{irving1990singular} and the fact that non-zero 
KL-polynomials always have $1$ as the constant term). From this, 
it follows that, if $i=l(w)-l(x)$, then 
$\left( P(w \cdot \lambda) \colon {\Delta}(x \cdot \lambda)\shift{i} \right) = 1$.

So, assume $i \neq l(w)-l(x)$ and 
$\left( P(w \cdot \lambda) \colon {\Delta}(x \cdot \lambda)\shift{i} \right) >0$. 
From the above, we know that 
$\left( P(w \cdot \lambda) 
\colon {\Delta}(x \cdot \lambda)\shift{l(w)-l(x)} \right) = 1$. 
This implies that ${\Delta}(x \cdot \lambda)$ occurs more
than once in $P(w \cdot \lambda)$, which is a contradiction. 
This establishes \eqref{equation:proj_mult_free00}, and hence 
implies claim~\eqref{item:proj_mult_free00}.

Claims~\eqref{item:proj_mult_free01} and \eqref{item:proj_mult_free02}
are equivalent by BGG reciprocity. Claim \eqref{item:proj_mult_free03} 
is equivalent to claim~\eqref{item:proj_mult_free02} by 
\cite[Theorem~7.1]{stroppel2003category_quivers}.
\end{proof}

A similar result in the other direction is weaker, due to the fact 
that projective modules in ${\mathcal{O}}^\lambda_0$ are much more complicated
than projective modules in ${\mathcal{O}}_\lambda$, as the following example suggests. 

\begin{example}
In type $A_3$, take $\lambda+\rho=(1,1,0,0)$ and $w=s_3s_1$. 
Then  $S=\{s_1,s_3\}$. The BGG complex of 
$L(w \cdot \lambda) \in {\mathcal{O}}_\lambda$ is not exact. 
However, its Koszul-dual is the projective module 
$P^\lambda(s_2s_3s_1s_2 \cdot 0) \in {\mathcal{O}}^\lambda_0$, 
which has a multiplicity-free generalized Verma flag consisting of:
\begin{displaymath}
\begin{array}{ccc}
& {\Delta}^\lambda(s_2s_3s_1s_2 \cdot 0)& \\
{\Delta}^\lambda(s_2s_3s_1 \cdot 0)\langle 1\rangle&& 
\ {\Delta}^\lambda(s_2 \cdot 0)\langle 1\rangle \\
& {\Delta}^\lambda(e \cdot 0)\langle 2\rangle.&
\end{array}
\end{displaymath}
Note that the components ${\Delta}^\lambda(s_2s_3s_1s_2 \cdot 0)$ 
and ${\Delta}^\lambda(s_2s_3s_1 \cdot 0)\langle 1\rangle$ are 
``predictable'' in the sense that they correspond to trivial 
KL-polynomials, while the other two, 
${\Delta}^\lambda(s_2 \cdot 0)\langle 1\rangle$ and 
${\Delta}^\lambda(e \cdot 0)\langle 2\rangle$, are coming 
from non-trivial (however, monomial) KL-polynomials.
\end{example}

Recall that a generalized Verma module 
${\Delta}^\lambda(\hat{x} \cdot 0)$, where $\hat{x} \in {}^\lambda W$, 
is the maximal quotient of ${\Delta}(\hat{x} \cdot 0) \in {\mathcal{O}}$ 
that belongs to the category ${\mathcal{O}}^\lambda$. The kernel 
of this quotient consists of all submodules 
${\Delta}(\hat{y} \cdot 0) \subset {\Delta}(\hat{x} \cdot 0)$ 
for which $\hat{y} \notin {}^\lambda W$,
see \cite[Page~187]{Hu}. From this it follows 
that the head of a Verma module ${\Delta}(\hat{w} \cdot 0)$, where
$\hat{w} \in {}^\lambda W$, survives in 
${\operatorname{rad}}^{l(\hat{w})-l(\hat{x})} {\Delta}^\lambda(\hat{x} \cdot 0)$ 
if and only if there is no $\hat{y}$ such that 
$\hat{x} \leq \hat{y} \leq \hat{w}$ and 
$\hat{y} \notin {}^\lambda W$. This is equivalent to 
$\mu^\lambda(w,x) \neq 0$, that is, to $x \in X_w$. We say that this 
particular occurrence of $L(\hat{w} \cdot 0)$ in 
$\Delta^\lambda(\hat{x} \cdot 0)$ is \emph{predictable}. 
In this case, by the BGG reciprocity, we also have
\begin{displaymath}
\left( P^\lambda( \hat{w} \cdot 0) \colon 
{\Delta}^\lambda(\hat{x} \cdot 0)\shift{l(\hat{w})-l(\hat{x})}\right)=1. 
\end{displaymath}
This particular occurrence of ${\Delta}^\lambda(\hat{x} \cdot 0)$ in 
$P^\lambda( \hat{w} \cdot 0)$ is also said to be \emph{predictable}. 
In conclusion, we say that an occurrence of either a simple in a 
Verma, or a Verma in a projective to be predictable, if the shift 
in grading is precisely given by the difference of lengths of the 
parameters. Note that, in ${\mathcal{O}}_\lambda$, an occurrence is 
predictable if and only if the corresponding (ungraded) multiplicity is one.

\begin{remark}
Note the following instance of the Koszul duality. In ${\mathcal{O}}_\lambda$, 
BGG complexes are given by $X_w \subseteq \tilde{W}^\lambda$ which also
coincides with the support of the function
$\mu^\lambda(w,-)$, and the predictable factors are given by the interval 
$[w,w_0] \cap \tilde{W}^\lambda$. In ${\mathcal{O}}^\lambda_0$, 
the roles are reversed (up to the bijection $x \mapsto \hat{x}$).
\end{remark}

\begin{proposition}\label{item:proj_mult_free-st}
For $w \in \tilde{W}^\lambda$, the following statements are equivalent:
\begin{enumerate}[$($a$)$]
\item \label{item:proj_mult_free10}
The singular BGG complex (\ref{equation:singular_BGG}) of $L(w \cdot \lambda) \in {\mathcal{O}}_\lambda$ is exact.
\item \label{item:proj_mult_free11}
The module $P^\lambda(\hat{w} \cdot 0) \in {\mathcal{O}}^\lambda_0$ 
has a generalized Verma flag consisting only of predictable factors. 
\end{enumerate}
\end{proposition}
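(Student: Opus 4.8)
The plan is to transport the numerical exactness criterion of Theorem~\ref{theorem:singular_BGG_exactness} across Koszul duality by means of Proposition~\ref{proposition:BGS_ext_mult} and graded BGG reciprocity, and then to recognize the transported condition as precisely the assertion that $P^\lambda(\hat w\cdot 0)$ has a generalized Verma flag of predictable factors. Every equivalence in the chain is strict, so it suffices to follow the dictionary in both directions simultaneously rather than argue the two implications separately.

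First I would record that, by Theorem~\ref{theorem:singular_BGG_exactness}, claim~\eqref{item:proj_mult_free10} is equivalent to the condition that, for all $i\geq 0$ and all $x\in\tilde W^\lambda$,
\begin{displaymath}
\dim\operatorname{Ext}^i_{\mathcal O}\bigl(\Delta(x\cdot\lambda),L(w\cdot\lambda)\bigr)=\begin{cases}1,& x\in X^i_w;\\ 0,&\text{otherwise}.\end{cases}
\end{displaymath}
Applying Proposition~\ref{proposition:BGS_ext_mult}\eqref{item:BGS_ext_mult1} rewrites the left-hand side as the radical-layer multiplicity $[\operatorname{rad}^i\Delta^\lambda(\hat x\cdot 0):L(\hat w\cdot 0)]$, and the graded BGG reciprocity of Proposition~\ref{proposition:BGG_reciprocity} (in its parabolic form) rewrites this in turn as the graded Verma-flag multiplicity $(P^\lambda(\hat w\cdot 0):\Delta^\lambda(\hat x\cdot 0)\shift{i})$. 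Thus claim~\eqref{item:proj_mult_free10} is equivalent to the statement that this last multiplicity equals $1$ when $x\in X^i_w$ and $0$ in every other case.

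The remaining task is to match this with predictability. Since $\hat u=u^{-1}w_0$, we have $l(\hat w)-l(\hat x)=l(x)-l(w)$, so for $x\in X^i_w$ the homological index $i=l(x)-l(w)$ is exactly the predictable shift $l(\hat w)-l(\hat x)$; that is, the occurrences forced by the displayed condition are precisely the predictable ones. I would then invoke the analysis preceding the proposition, which shows that the predictable occurrence of $\Delta^\lambda(\hat x\cdot 0)$ in $P^\lambda(\hat w\cdot 0)$ is present (with multiplicity one) exactly when $\mu^\lambda(w,x)\neq 0$, i.e.\ when $x\in X_w$. Hence the condition from the previous paragraph is equivalent to the vanishing of every graded multiplicity $(P^\lambda(\hat w\cdot 0):\Delta^\lambda(\hat x\cdot 0)\shift{i})$ with $i\neq l(\hat w)-l(\hat x)$, the predictable multiplicities being already determined; this is precisely the statement that the generalized Verma flag of $P^\lambda(\hat w\cdot 0)$ contains only predictable factors, which is claim~\eqref{item:proj_mult_free11}.

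I expect the only delicate point to be the grading bookkeeping in the middle step: one must be sure that the homological index $i$ of the BGG complex, the radical depth inside $\Delta^\lambda(\hat x\cdot 0)$, and the grading shift in the Verma flag of $P^\lambda(\hat w\cdot 0)$ are genuinely the same integer, which rests on the length identity above together with the linearity of the graded lifts discussed in Subsection~\ref{subsection:graded_O}. Granting this, and granting the earlier identification of predictability with the nonvanishing of $\mu^\lambda(w,x)$, the argument is a direct concatenation of the cited results rather than a new computation.
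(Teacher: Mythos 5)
Your proposal is correct and follows essentially the same route as the paper's proof: Theorem~\ref{theorem:singular_BGG_exactness}\eqref{item:exactness3}, then Proposition~\ref{proposition:BGS_ext_mult}\eqref{item:BGS_ext_mult1}, then graded BGG reciprocity in ${\mathcal{O}}^\lambda$, arriving at the condition $\left(P^\lambda(\hat w\cdot 0):\Delta^\lambda(\hat x\cdot 0)\shift{i}\right)=1$ for $x\in X^i_w$ and $0$ otherwise, which is then identified with claim~\eqref{item:proj_mult_free11}. The only difference is that you spell out the final identification (the length identity $l(\hat w)-l(\hat x)=l(x)-l(w)$ and the fact, from the discussion preceding the proposition, that predictable occurrences exist exactly when $\mu^\lambda(w,x)\neq 0$), which the paper compresses into ``this is precisely claim (b)''.
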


Note that a generalized Verma flag consisting only of predictable factors
is necessarily a multiplicity free flag.

\begin{proof}
From Theorem \ref{theorem:singular_BGG_exactness}\eqref{item:exactness3}, 
Proposition \ref{proposition:BGS_ext_mult}\eqref{item:BGS_ext_mult1}, 
and the BGG reciprocity it follows that claim~\eqref{item:proj_mult_free10} 
is equivalent to the following statement: 
For all $x \in \tilde{W}^\lambda$, $x \geq w$ and $i \geq 0$, we have:
\begin{equation*}
\left( P^\lambda( \hat{w} \cdot 0) \colon 
{\Delta}^\lambda(\hat{x} \cdot 0) \shift{i}  \right) = 
\begin{cases} 1, & \text{if }x \in X^i_w; \\ 0, 
& \text{otherwise.} \end{cases}
\end{equation*}
This is precisely claim~\eqref{item:proj_mult_free11}.
\end{proof}

Now we can show the necessity of using $X_w$ as the indexing 
set for our singular BGG complexes.

\begin{proposition}\label{proposition:X_w}
Suppose we have a resolution in ${\mathcal{O}}_\lambda$ of the form
\begin{equation*}
\ldots  \to \bigoplus_{x \in Y^{i}}  {\Delta}(x \cdot \lambda)^{\oplus m(x)} 
\to \ldots \to \bigoplus_{x \in Y^0} {\Delta}(x \cdot \lambda)^{\oplus m(x)} 
\to L(w \cdot \lambda) \to 0
\end{equation*}
for some multiplicities $m(x) >0$, which lifts to a linear resolution 
in $E^0_\lambda$-${\operatorname{gmod}}$. Then all $m(x)=1$ and 
$Y^i = X^i_w$, for all $i \geq 0$.
\end{proposition}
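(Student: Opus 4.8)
The plan is to extract numerical information from the hypothesis that the resolution lifts to a linear resolution in $E^0_\lambda$-$\operatorname{gmod}$, and to match it against the graded multiplicities already pinned down by Koszul duality. The key observation is that linearity forces the grading shift on the $i$-th term to be exactly $\langle i\rangle$, so the $i$-th term of the lifted resolution is $\bigoplus_{x\in Y^i} \Delta(x\cdot\lambda)\langle i\rangle^{\oplus m(x)}$. Since a resolution computes $\operatorname{Ext}$-groups, and since in the graded/linear setting these $\operatorname{Ext}$-groups are concentrated in a single internal degree, I would compare the (graded) Euler characteristic of this resolution in the graded Grothendieck group $K_0$ against that of $L(w\cdot\lambda)$, and conclude that the multiplicities $m(x)$ and the index sets $Y^i$ are \emph{forced}.

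More concretely, I would argue as follows. A linear resolution by Verma modules of $L(w\cdot\lambda)$ is, by definition, a minimal projective resolution after passing to the graded picture and using that Verma modules are the standard objects; its $i$-th term therefore has the property that its multiplicities record $\dim \operatorname{Ext}^i_{\mathcal{O}}(\Delta(x\cdot\lambda), L(w\cdot\lambda))$, with the grading shift $\langle i\rangle$ encoding that this Ext lives in homological and internal degree $i$ simultaneously (this is exactly the content of linearity). Thus for each $x$ and each $i$,
\begin{displaymath}
m(x)\cdot[x\in Y^i] = \dim\operatorname{Ext}^i_{\mathcal{O}}\left(\Delta(x\cdot\lambda),L(w\cdot\lambda)\right).
\end{displaymath}
The point is that the right-hand side does not depend on the chosen resolution: it is an invariant of $L(w\cdot\lambda)$. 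Now I would invoke the Koszul-dual translation of this Ext-group supplied by Proposition~\ref{proposition:BGS_ext_mult}\eqref{item:BGS_ext_mult1}, which identifies it with the radical-layer multiplicity $[\operatorname{rad}^i \Delta^\lambda(\hat x\cdot 0) : L(\hat w\cdot 0)]$.

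The decisive step is then to show this dual multiplicity is at most $1$ and is nonzero in exactly one degree $i$, namely $i=l(x)-l(w)$, precisely when $x\in X_w$. For this I would use the analysis already carried out in Subsection~\ref{s6.3}: the occurrence of the head $L(\hat w\cdot 0)$ of $\Delta(\hat w\cdot 0)$ survives in $\operatorname{rad}^{l(\hat w)-l(\hat x)}\Delta^\lambda(\hat x\cdot 0)$ if and only if $x\in X_w$, and this is a \emph{predictable} occurrence. Because the resolution is linear, any contribution to $\operatorname{Ext}^i$ must sit in internal degree $i$; but a non-predictable occurrence would force a contribution in a degree other than $l(x)-l(w)$, violating linearity, while a multiplicity exceeding $1$ would likewise be detected by $\dim\operatorname{Ext}$. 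Combining these, $m(x)=1$ and $x\in Y^i$ exactly when $x\in X^i_w$, which is the claim.

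The main obstacle I anticipate is justifying cleanly that linearity of the lift forces the single-degree concentration of each $\operatorname{Ext}^i$, i.e. that a linear resolution is automatically minimal and that its terms literally compute $\operatorname{Ext}$ with the expected grading shift, rather than merely bounding it. One must rule out the possibility that two Verma summands indexed by the same $x$ but appearing in different homological degrees conspire to cancel in the Grothendieck group while still being forced by linearity; this is exactly where the ``linear'' hypothesis (as opposed to a general graded resolution) does the work, since it ties the homological degree rigidly to the internal degree and thereby prevents any such cancellation. Once that rigidity is in place, the comparison with Proposition~\ref{proposition:BGS_ext_mult} and the predictability analysis is essentially bookkeeping.
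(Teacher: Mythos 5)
You have isolated the right invariance principle---the multiplicities in any linear Verma resolution should be resolution-independent because they are Ext-dimensions---but the mechanism you give for it fails, and this is a genuine gap. A linear resolution by Verma modules is \emph{not} ``by definition a minimal projective resolution'': Verma modules are not projective in $\mathcal{O}_\lambda$, and moreover the resolution of $L(w\cdot\lambda)$ sits in the \emph{second} argument of $\operatorname{Ext}^i_{\mathcal{O}}(\Delta(x\cdot\lambda),L(w\cdot\lambda))$, where projectivity of its terms would be of no help anyway; applying $\operatorname{Hom}(\Delta(x\cdot\lambda),-)$ to the resolution does not compute this Ext, because $\operatorname{Ext}^{>0}$ between Verma modules need not vanish. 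Your fallback (linearity prevents cancellation) only shows that the relevant Hom-complex has zero differentials; it does not explain why that Hom-complex computes Ext at all. The correct mechanism, available inside the paper, is: apply $\operatorname{Hom}(-,\nabla(x\cdot\lambda)\shift{j})$ to the resolution, use the orthogonality $\operatorname{Ext}^{>0}_{\mathcal{O}}(\Delta,\nabla)=0$ together with \cite[Lemma~III.2.1]{happel1988triangulated} (exactly as in the proofs of Proposition~\ref{item:prop_gh} and Corollary~\ref{cor98325}) to see that this complex computes $\operatorname{Ext}^i(L(w\cdot\lambda),\nabla(x\cdot\lambda)\shift{j})$, use linearity to conclude the differentials vanish, and then apply the graded simple-preserving duality to convert to $\operatorname{Ext}^i(\Delta(x\cdot\lambda),L(w\cdot\lambda))$. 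Separately, your ``decisive step'' silently uses that $x\in Y^i$ forces $x\geq w$ and $l(x)=l(w)+i$; without this, a non-predictable occurrence contradicts nothing, since linearity ties homological degree to \emph{internal} degree, not to length differences. That fact requires the short induction along the exact complex using positivity of the grading (it is the first sentence of the paper's proof), and you never derive it.

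Once these repairs are made your argument does close, but note that it is genuinely different from, and heavier than, the paper's. The paper never computes an Ext-group: it takes the graded Euler characteristic of the lifted resolution in the graded Grothendieck group, observes that for fixed $x$ the predictable occurrences of $L(x\cdot\lambda)$ inside all the terms $\Delta(z\cdot\lambda)\shift{l(z)-l(w)}$ with $z\in[w,x]\cap\tilde{W}^\lambda$ contribute to the single class $[L(x\cdot\lambda)\shift{l(x)-l(w)}]$ while non-predictable occurrences land in strictly lower graded components, concludes that $\sum_{z\in[w,x]\cap\tilde{W}^\lambda}(-1)^{l(z)-l(w)}m(z)$ equals $1$ for $x=w$ and $0$ for $x>w$, and finishes by M\"obius inversion, obtaining $m(x)=(-1)^{l(x)-l(w)}\mu^\lambda(w,x)$ directly. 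Your route instead re-derives condition~\eqref{item:exactness3} of Theorem~\ref{theorem:singular_BGG_exactness} from the hypothesis, via Koszul duality (Proposition~\ref{proposition:BGS_ext_mult}) and the predictable-occurrence analysis of Subsection~\ref{s6.3}; this is in the spirit of the paper's own remark that Proposition~\ref{proposition:X_w} also follows from Theorem~\ref{thmuniquebgg}, ``but the proof presented here is much more elementary.''
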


\begin{proof}
From linearity and exactness, it follows that, for every 
$x \in Y^i$, we must have $x \geq w$ and $l(x)=l(w)+i$. 
From this it follows that each ${\Delta}(x \cdot \lambda)$, 
where $x \in Y^i$, lifts to 
${\Delta}(x \cdot \lambda)\shift{l(x)-l(w)} 
\in E^0_\lambda$-${\operatorname{gmod}}$.

Consider the Euler characteristic of the lift of the above 
resolution in the graded Grothendieck group:
\begin{equation}\label{equation:Euler_char}
\sum_{x \in [w,w_0]\cap \tilde{W}^\lambda} (-1)^{l(x)-l(w)} m(x) 
[{\Delta}(x \cdot \lambda) \shift{l(x)-l(w)}] = [L(w \cdot \lambda)].
\end{equation}
Here we put $m(x)=0$ if $\displaystyle x \not\in \bigcup_i Y_i$.

Fix $x \in [w,w_0]\cap \tilde{W}^\lambda$. For any
$z \in [w,x]\cap \tilde{W}^\lambda$, the Verma module
${\Delta}(z \cdot \lambda)$ contains the predictable 
occurrence of $L(x \cdot \lambda)\shift{l(x)-l(z)}$. 
This occurrence appears in \eqref{equation:Euler_char} 
as $L(x \cdot \lambda)\shift{l(x)-l(w)}$, independent 
of $z$. A potential non-predictable occurrence can 
appear only in a strictly lower graded component of 
\eqref{equation:Euler_char}. It follows that all 
the predictable occurrences must cancel out, i.e.,
\begin{displaymath}
g(x) := \sum_{z \in [w,x]\cap \tilde{W}^\lambda} 
(-1)^{l(z)-l(w)} m(z) = \begin{cases} 1, &  \text{ if } x=w; 
\\ 0, & \text{ if } x > w. \end{cases} 
\end{displaymath}
By the M\"obius inversion formula,
see e.g. \cite[Proposition 3.7.1.]{stanley2011enumerative},
we have
\begin{align*}
(-1)^{l(x)-l(w)} m(x) &= \sum_{z \in [w,x]\cap
\tilde{W}^\lambda}  g(z) \mu^\lambda(z,x) = 
\mu^\lambda(w,x), \text{ i.e.,} \\
m(x) &= (-1)^{l(x)-l(w)} \mu^\lambda(w,x) =  
\begin{cases} 1, &  \text{if }  x \in X_w; \\ 0, & \text{otherwise}. \end{cases}
\end{align*}
This completes the proof.
\end{proof}
Proposition \ref{proposition:X_w} also follows from Theorem \ref{thmuniquebgg}, but the proof presented here is much more elementary.

\subsection{Kostant modules revisited}\label{subsection:Kostant}

We say that a module $L(w \cdot \lambda)$, where $w \in \tilde{W}^\lambda$, 
is a \emph{Kostant module}, if there is a convex subset 
(which is not necessarily an interval) $Y \subseteq [w,w_0] \cap \tilde{W}^\lambda$ 
such that, for all $i \geq 0$, we have
\begin{displaymath}
H^i({\mathfrak{n}}^+, L(w \cdot \lambda)) = 
\bigoplus_{\substack{x \in Y \\ l(x)=i+l(w)}} {\mathbb{C}}_{x \cdot \lambda}. 
\end{displaymath}

\begin{proposition}\label{item:kostantprop}
For $w \in \tilde{W}^\lambda$, the following statements are equivalent:
\begin{enumerate}[$($a$)$]
\item \label{item:kostant20}
The singular BGG complex (\ref{equation:singular_BGG}) of $L(w \cdot \lambda) \in {\mathcal{O}}_\lambda$ is exact.
\item \label{item:kostant21}
$L(w \cdot \lambda)$ is a Kostant module in the above sense.
\end{enumerate}
\end{proposition}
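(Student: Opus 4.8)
The plan is to reduce everything to Theorem~\ref{theorem:singular_BGG_exactness}, whose condition~\eqref{item:exactness2} already reformulates exactness of the singular BGG complex as the equality $H^i({\mathfrak{n}}^+, L(w \cdot \lambda)) = \bigoplus_{x \in X^i_w}{\mathbb{C}}_{x \cdot \lambda}$ for all $i$. Comparing this with the definition of a Kostant module, both implications come down to understanding the relationship between the fixed set $X_w$ and an \emph{arbitrary} convex set $Y \subseteq [w,w_0]\cap\tilde{W}^\lambda$ supporting the ${\mathfrak{n}}^+$-cohomology. Thus the whole proof is an identification $Y = X_w$.

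For \eqref{item:kostant20}$\Rightarrow$\eqref{item:kostant21} I would first invoke Theorem~\ref{theorem:singular_BGG_exactness}\eqref{item:exactness1}$\Rightarrow$\eqref{item:exactness2} to obtain $H^i({\mathfrak{n}}^+, L(w \cdot \lambda)) = \bigoplus_{x \in X^i_w}{\mathbb{C}}_{x \cdot \lambda}$, and then check that $Y := X_w$ is an admissible witness, i.e. that $X_w$ is convex in $[w,w_0]\cap\tilde{W}^\lambda$. In fact I would prove the stronger statement that $X_w$ is an \emph{order ideal} of $[w,w_0]\cap\tilde{W}^\lambda$: if $z \in X_w$ and $w \leq y \leq z$ with $y \in \tilde{W}^\lambda$, then any element $z' \notin \tilde{W}^\lambda$ with $w < z' < y$ produced by Proposition~\ref{proposition:mu_Wlambda} would satisfy $w < z' < z$, contradicting $\mu^\lambda(w,z) \neq 0$; hence $\mu^\lambda(w,y) \neq 0$ and $y \in X_w$. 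An order ideal is convex, so this direction is complete.

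For the converse \eqref{item:kostant21}$\Rightarrow$\eqref{item:kostant20} I would start from a convex $Y$ witnessing that $L(w \cdot \lambda)$ is a Kostant module. The weight-thin, multiplicity-free shape of $H^\bullet({\mathfrak{n}}^+, L(w \cdot \lambda))$ places $L(w \cdot \lambda)$ in the class of generalized Kostant modules, so by \cite[Theorem~2.8]{enright2004resolutions} it admits a minimal resolution
\begin{displaymath}
\ldots \to \bigoplus_{\substack{x \in Y \\ l(x)=i+l(w)}} {\Delta}(x \cdot \lambda) \to \ldots \to {\Delta}(w \cdot \lambda) \to L(w \cdot \lambda) \to 0
\end{displaymath}
by direct sums of Verma modules, indexed by $Y$. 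Being a minimal resolution of a simple module by direct sums of Verma modules, it lifts to a \emph{linear} resolution in $E^0_\lambda\text{-}{\operatorname{gmod}}$ by the remark following Proposition~\ref{proposition:BGG_reciprocity}. Now Proposition~\ref{proposition:X_w} applies verbatim and forces $\{x \in Y : l(x)=i+l(w)\} = X^i_w$ with all multiplicities equal to $1$; that is, $Y = X_w$. Consequently $H^i({\mathfrak{n}}^+, L(w \cdot \lambda)) = \bigoplus_{x \in X^i_w}{\mathbb{C}}_{x \cdot \lambda}$, which is condition~\eqref{item:exactness2} of Theorem~\ref{theorem:singular_BGG_exactness}, and exactness of the singular BGG complex follows from that theorem.

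The main obstacle is the rigidity required in the converse, namely that the convex set $Y$ cannot be strictly larger than $X_w$. The inclusion $X_w \subseteq Y$ is the soft ``predictable factor'' direction: each $x \in X_w$ forces a nonzero ${\operatorname{Ext}}^{l(x)-l(w)}_{\mathcal{O}}({\Delta}(x \cdot \lambda), L(w \cdot \lambda))$ and hence a nonzero cohomology contribution. The reverse inclusion is the delicate point, and I expect a bare Euler-characteristic computation to be insufficient here: the alternating sum only recovers the inverse Kazhdan--Lusztig coefficients, which need not vanish off $X_w$ a priori, so it cannot by itself rule out extra elements in $Y$. What rescues the argument is the \emph{linearity} of the graded lift, which is exactly the hypothesis powering Proposition~\ref{proposition:X_w} via graded M\"obius inversion; this is why I route the converse through the Enright--Shelton resolution and Proposition~\ref{proposition:X_w} rather than through a character count.
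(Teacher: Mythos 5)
Your proposal is correct, and its forward direction matches the paper, but your converse runs along a genuinely different track. For \eqref{item:kostant20}$\Rightarrow$\eqref{item:kostant21} you do what the paper does (invoke Theorem~\ref{theorem:singular_BGG_exactness}\eqref{item:exactness2} with $Y=X_w$), except that you additionally verify that $X_w$ is convex by showing it is an order ideal of $[w,w_0]\cap\tilde{W}^\lambda$ via Proposition~\ref{proposition:mu_Wlambda}; the paper takes this for granted, so that check is a small but genuine improvement. For \eqref{item:kostant21}$\Rightarrow$\eqref{item:kostant20} the paper never constructs a resolution: it identifies $H^i({\mathfrak{n}}^+,L(w\cdot\lambda))_{x\cdot\lambda}$ with ${\operatorname{Ext}}^i_{\mathcal{O}}({\Delta}(x\cdot\lambda),L(w\cdot\lambda))$, rewrites this dimension via Koszul duality (Proposition~\ref{proposition:BGS_ext_mult}\eqref{item:BGS_ext_mult1}) and graded BGG reciprocity as $\left(P^\lambda(\hat{w}\cdot 0)\colon{\Delta}^\lambda(\hat{x}\cdot 0)\shift{i}\right)$, and observes that at the linear degree $i=l(x)-l(w)$ this multiplicity is positive exactly when the occurrence is predictable, i.e.\ when $x\in X_w$; so $Y=X_w$ falls out of a pure dimension count. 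You instead stay inside ${\mathcal{O}}_\lambda$: you use \cite[Theorem~2.8]{enright2004resolutions} to produce a resolution indexed by $Y$, lift it to a linear resolution, and let the rigidity of Proposition~\ref{proposition:X_w} force $Y=X_w$; then Theorem~\ref{theorem:singular_BGG_exactness} finishes. Both arguments are sound. The paper's buys economy: it needs no construction of any complex and no appeal to the (unproved) lifting remark, only graded multiplicities, and it fits the Koszul-duality theme of Section~\ref{s6}. Yours buys transparency: it isolates exactly where linearity (rather than a bare Euler-characteristic count) is doing the work, as your closing paragraph correctly diagnoses. Three cosmetic points: the remark you cite precedes Proposition~\ref{proposition:BGG_reciprocity} rather than following it; to apply it you should note in one line why the resolution from \cite{enright2004resolutions} is minimal (all components of its differentials are maps between Verma modules with distinct highest weights, hence non-invertible); and that resolution is due to Enright--Hunziker, not Enright--Shelton.
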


By Proposition~\ref{item:kostantprop},
the two notions of ``Kostant module'' agree on 
the regular blocks of category ${\mathcal{O}}$.

\begin{proof}
Given claim~\eqref{item:kostant20},
Theorem~\ref{theorem:singular_BGG_exactness}\eqref{item:exactness2} 
implies claim~\eqref{item:kostant21} with $Y=X_w$. '

Assume claim~\eqref{item:kostant21}, for some $Y$, and recall that 
\begin{displaymath}
H^i({\mathfrak{n}}^+, L(w \cdot \lambda))_{x \cdot \lambda} \cong {\operatorname{Ext}}^i_{\mathcal{O}} \left({\Delta}(x \cdot \lambda), L(w \cdot \lambda) \right), 
\end{displaymath}
and that the right hand side has dimension equal to 
$\left( P^\lambda( \hat{w} \cdot 0) \colon {\Delta}^\lambda(\hat{x} \cdot 0) \shift{i}  \right)$. Since $i=l(x)-l(w)$, the dimension is positive if and only if 
the occurrence of ${\Delta}^\lambda(\hat{x} \cdot 0) \shift{i}$
in $P^\lambda( \hat{w} \cdot 0)$ is predictable. Therefore, we obtain $Y=X_w$
implying claim~\eqref{item:kostant20}.
\end{proof}

For general singular-parabolic blocks, the convexness of $Y$ in 
the definition of Kostant modules should probably be expressed in 
terms of the  ``$\mu$-order'' as suggested in 
\cite[Subsection~9.3]{boe2009kostant}, instead of the Bruhat order. 
These two orders agree on ${\mathcal{O}}_\lambda$ and on
${\mathcal{O}}^\lambda_0$.

\section{Non-Kostant modules in low ranks}\label{s7}

In this section, we use 
Theorem~\ref{theorem:singular_BGG_exactness}\eqref{item:exactness4} together with Proposition \ref{prop5.1-1} to list non-Kostant modules, i.e., simple modules whose singular
BGG complex is not exact, in all singular blocks of category 
${\mathcal{O}}$ in ranks up to $4$ for classical Lie algebras, and 
for one large singularity in the exceptional case $F_4$.

In the first column of the following tables, we put singularity sets 
$S$ which determine the blocks ${\mathcal{O}}_\lambda$ (where $\lambda$ 
is a dominant integral weight with singularity $S$). 
In the second column we list all 
$w \in \tilde{W}^\lambda$ such that 
$L(w \cdot \lambda) \in {\mathcal{O}}_\lambda$ is not Kostant.

Instead of $S=\{\alpha_{i_1}, \ldots, \alpha_{i_j} \}$ we will 
just write $S=\{i_1, \ldots, i_j \}$. Similarly, instead of
$w=s_{i_1} \ldots s_{i_j}$ we will write $w=(i_1 \ldots i_j)$. 
All expressions will be reduced. 

The calculations were performed in SageMath, version 8.4.

\subsection{Rank 1 and 2}\label{s7.1}

There are no non-Kostant modules in ranks $1$ and $2$. 
This follows from the fact that all BGG complexes are exact 
already in the regular block, which is a consequence of the 
fact that all $KL$-polynomials are trivial.

\subsection{Rank 3}\label{s7.2}

In type $A_3$, we only present possible singularity sets 
up the unique non-identity  automorphism of the Dynkin diagram
which swaps $1 \leftrightarrow 3$. 
In type $B_3 = C_3$ we list all walls with non-Kostant modules.

\begin{center}
\begin{tabularx}{\textwidth}{|l|X|}
\hline
\multicolumn{2}{|c|}{Type $A_3$}\\
\hline 
Block ($S$) & Non-Kostant modules ($w$)\\ 
\hline 
$\emptyset$ & $(2)$, $(31)$ \\ 
\hline 
$\{ 1 \}$ & $ (31) $ \\ 
\hline 
$\{  2 \}$ & $ (2) $ \\ 
\hline 
$\{  1,3 \}$ & $ (31) $ \\ 
\hline
\end{tabularx}
\end{center}

\begin{center}
\begin{tabularx}{\textwidth}{|l|X|}
\hline
\multicolumn{2}{|c|}{Type $B_3 = C_3$}\\
\hline 
Block ($S$) & Non-Kostant modules ($w$)\\  \hline
$\emptyset$ & $(3)$, $(2)$, $(31)$, $(23)$, $(32)$, $(232)$, $(312)$, $(231)$,
$(121)$, $(3232)$, $(1231)$, $(2312)$, $(3121)$, $(31231)$ \\ \hline
$\{1\}$ & $(31)$, $(121)$, $(231)$, $(1231)$, $(3121)$, $(31231)$ \\ \hline
$\{2\}$ & $(2)$, $(32)$, $(232)$, $(312)$, $(121)$, $(3232)$, $(2312)$,
$(3121)$ \\ \hline
$\{3\}$ & $(3)$, $(31)$, $(23)$, $(231)$, $(1231)$, $(31231)$ \\ \hline
$\{1, 2\}$ & $(121)$, $(3121)$ \\ \hline
$\{1, 3\}$ & $(31)$, $(231)$, $(1231)$, $(31231)$ \\ \hline
\end{tabularx}
\end{center}

\subsection{Rank 4}\label{s7.3}

As before, in type $A_4$, all other singular walls 
can be obtained by by exchanging $1 \leftrightarrow 4$ and $2 \leftrightarrow 3$.
In type $D_4$, all other singular walls can be obtained
permuting $1$, $3$ and $4$.

Again, 
for type $B_4$, all the walls containing non-Kostant modules are given.

\begin{center}\scriptsize
\begin{tabularx}{\textwidth}{|l|X|}
\hline
\multicolumn{2}{|c|}{Type $A_4$}\\ 
\hline 
Block ($S$) & Non-Kostant modules ($w$)\\  \hline 
$\emptyset$ & $(3)$, $(2)$, $(32)$, $(23)$, $(42)$, $(31)$, $(41)$, $(342)$,
$(232)$, $(231)$, $(423)$, $(421)$, $(312)$, $(341)$, $(431)$, $(412)$,
$(2321)$, $(2342)$, $(4232)$, $(4231)$, $(3412)$, $(1232)$, $(3431)$, $(4121)$,
$(42321)$, $(23431)$, $(34121)$, $(12342)$, $(34312)$, $(41231)$, $(343121)$,
$(123431)$ \\ \hline
$\{1\}$ & $(31)$, $(41)$, $(341)$, $(431)$, $(421)$, $(2321)$, $(3431)$,
$(4231)$, $(4121)$, $(42321)$, $(23431)$, $(41231)$, $(343121)$, $(123431)$ \\
\hline
$\{2\}$ & $(2)$, $(32)$, $(42)$, $(342)$, $(232)$, $(312)$, $(412)$,
$(2342)$, $(4232)$, $(4121)$, $(3412)$, $(1232)$, $(34121)$, $(12342)$,
$(34312)$, $(343121)$ \\ \hline
$\{1, 2\}$ & $(4121)$, $(343121)$ \\ \hline
$\{1, 3\}$ & $(31)$, $(431)$, $(2321)$, $(3431)$, $(4231)$, $(23431)$,
$(42321)$, $(41231)$, $(123431)$\\ \hline
$\{1, 4\}$ & $(41)$, $(341)$, $(421)$, $(3431)$, $(4121)$, $(343121)$,
$(123431)$ \\ \hline
$\{2, 3\}$ & $(232)$, $(4232)$, $(1232)$ \\ \hline
$\{1, 2, 4\}$ & $(4121)$, $(343121)$ \\ \hline
\end{tabularx}
\end{center}

\begin{center}\scriptsize
\begin{tabularx}{\textwidth}{|l|X|}
\hline
\multicolumn{2}{|c|}{Type $B_4 = C_4$}\\
\hline 
Block ($S$) & Non-Kostant modules ($w$)\\  \hline
$\emptyset$ & $(2)$, $(3)$, $(4)$, $(31)$, $(23)$, $(32)$, $(41)$, $(43)$, $(42)$, $(34)$, $(121)$, $(232)$, $(312)$, $(231)$, $(434)$, $(343)$, $(341)$, $(342)$, $(431)$, $(234)$, $(421)$, $(423)$, $(412)$, $(432)$, $(2312)$, $(1232)$, $(3121)$, $(4234)$, $(1231)$, $(2321)$, $(4343)$, $(4341)$, $(4342)$, $(4121)$, $(4232)$, $(4123)$, $(4312)$, $(4231)$, $(3431)$, $(2342)$, $(3423)$, $(3421)$, $(2343)$, $(2341)$, $(3432)$, $(3412)$, $(12312)$, $(23121)$, $(34234)$, $(41234)$, $(12321)$, $(43431)$, $(43423)$, $(43421)$, $(42342)$, $(43432)$, $(43412)$, $(42343)$, $(42341)$, $(12342)$, $(34123)$, $(34232)$, $(34121)$, $(12341)$, $(34312)$, $(34231)$, $(23432)$, $(23412)$, $(42312)$, $(23431)$, $(41232)$, $(43121)$, $(23423)$, $(23421)$, $(41231)$, $(42321)$, $(123121)$, $(423432)$, $(423412)$, $(423431)$, $(341234)$, $(423423)$, $(423421)$, $(434321)$, $(434123)$, $(434232)$, $(434121)$, $(412342)$, $(342342)$, $(434312)$, $(434231)$, $(412343)$, $(342343)$, $(412341)$, $(342341)$, $(123412)$, $(412312)$, $(423121)$, $(123431)$, $(123423)$, $(123421)$, $(412321)$, $(234123)$, $(234232)$, $(234121)$, $(342312)$, $(234312)$, $(234231)$, $(343121)$, $(341232)$, $(342321)$, $(341231)$, $(4123432)$, $(4123412)$, $(3423432)$, $(3423412)$, $(4123431)$, $(3423431)$, $(3423423)$, $(4123423)$, $(3423421)$, $(4123421)$, $(4234321)$, $(4234123)$, $(4234232)$, $(4234121)$, $(4343121)$, $(4341232)$, $(4234312)$, $(3412342)$, $(4234231)$, $(4342321)$, $(4341231)$, $(3412343)$, $(3412341)$, $(1234232)$, $(1234121)$, $(3412312)$, $(4342342)$, $(1234231)$, $(4342343)$, $(3412321)$, $(2342321)$, $(2341231)$, $(4123121)$, $(2342312)$, $(2341232)$, $(34123432)$, $(42342321)$, $(34123412)$, $(42341231)$, $(34123431)$, $(42342312)$, $(34123423)$, $(42343121)$, $(34123421)$, $(42341232)$, $(23412341)$, $(43412312)$, $(34234232)$, $(41234321)$, $(34234121)$, $(41234123)$, $(34234321)$, $(41234232)$, $(34234123)$, $(41234121)$, $(34234231)$, $(41234312)$, $(34234312)$, $(41234231)$, $(23412342)$, $(43412321)$, $(43423432)$, $(12342321)$, $(12341231)$, $(43423431)$, $(43423423)$, $(34123121)$, $(43412342)$, $(43412341)$, $(23412312)$, $(342342321)$, $(342341231)$, $(412342321)$, $(412341231)$, $(412343121)$, $(342343121)$, $(412341232)$, $(342341232)$, $(434123121)$, $(234123431)$, $(341234232)$, $(341234121)$, $(341234321)$, $(341234123)$, $(341234231)$, $(423412321)$, $(341234312)$, $(234123412)$, $(423412312)$, $(434234232)$, $(423412341)$, $(434234231)$, $(434234312)$, $(123412312)$, $(423412342)$, $(434123412)$, $(234123121)$, $(434123431)$, $(434123423)$, $(434123421)$, $(3412342321)$, $(3412341231)$, $(4234123121)$, $(3412343121)$, $(3412341232)$, $(2341234121)$, $(3423412321)$, $(4123412321)$, $(2341234312)$, $(4123412312)$, $(4342342321)$, $(1234123121)$, $(4342341232)$, $(3423412342)$, $(4341234232)$, $(4341234121)$, $(4234123431)$, $(4341234231)$, $(4234123412)$, $(41234123121)$, $(43423412342)$, $(23412343121)$, $(34123412321)$, $(42341234121)$, $(42341234312)$, $(43412342321)$, $(43412341231)$, $(34234123431)$, $(434234123431)$, $(423412343121)$ \\ \hline

$\{1\}$ & $(31)$, $(41)$, $(231)$, $(121)$, $(341)$, $(431)$,
$(421)$, $(3121)$, $(1231)$, $(2321)$, $(4341)$, $(3431)$, $(3421)$, $(2341)$,
$(4231)$, $(4121)$, $(23121)$, $(12321)$, $(42341)$, $(43431)$, $(34121)$,
$(12341)$, $(34231)$, $(23431)$, $(43121)$, $(23421)$, $(41231)$, $(42321)$,
$(123121)$, $(423431)$, $(423421)$, $(434321)$, $(434121)$, $(412341)$,
$(342341)$, $(123421)$, $(341231)$, $(342321)$, $(123431)$, $(343121)$,
$(234121)$, $(423121)$, $(412321)$, $(234231)$, $(4234321)$, $(4234121)$,
$(3412341)$, $(4234231)$, $(4341231)$, $(4342321)$, $(4123421)$, $(3423421)$,
$(4343121)$, $(4123431)$, $(3423431)$, $(1234121)$, $(1234231)$, $(3412321)$,
$(2342321)$, $(2341231)$, $(4123121)$, $(42342321)$, $(42341231)$, $(34123431)$,
$(42343121)$, $(34123421)$, $(23412341)$, $(41234321)$, $(34234321)$,
$(41234121)$, $(41234231)$, $(43412321)$, $(12341231)$, $(12342321)$,
$(34123121)$, $(43423431)$, $(43412341)$, $(341234321)$, $(341234121)$,
$(341234231)$, $(423412321)$, $(234123431)$, $(434123121)$, $(342342321)$,
$(412341231)$, $(412342321)$, $(412343121)$, $(342343121)$, $(423412341)$,
$(434123431)$, $(434123421)$, $(3412342321)$, $(3412341231)$, $(3412343121)$,
$(3423412321)$, $(4123412321)$, $(4342342321)$, $(4234123431)$, $(1234123121)$,
$(4341234121)$, $(4341234231)$, $(34123412321)$, $(23412343121)$,
$(41234123121)$, $(43412342321)$, $(43412341231)$, $(34234123431)$,
$(434234123431)$, $(423412343121)$ \\ \hline

$\{2\}$ & $(2)$, $(32)$, $(42)$, $(312)$, $(121)$, $(232)$,
$(342)$, $(412)$, $(432)$, $(2312)$, $(1232)$, $(3121)$, $(4342)$, $(3432)$,
$(3412)$, $(4312)$, $(2342)$, $(4121)$, $(4232)$, $(42342)$, $(12312)$,
$(23121)$, $(43432)$, $(43412)$, $(12342)$, $(34232)$, $(34121)$, $(34312)$,
$(23432)$, $(23412)$, $(42312)$, $(41232)$, $(43121)$, $(123121)$, $(423432)$,
$(423412)$, $(434232)$, $(434121)$, $(412342)$, $(342342)$, $(434312)$,
$(343121)$, $(341232)$, $(123412)$, $(342312)$, $(234312)$, $(234232)$,
$(234121)$, $(412312)$, $(423121)$, $(4234312)$, $(3412342)$, $(4234232)$,
$(4234121)$, $(4343121)$, $(4341232)$, $(4123432)$, $(3423432)$, $(4123412)$,
$(3423412)$, $(1234232)$, $(1234121)$, $(3412312)$, $(4342342)$, $(4123121)$,
$(2342312)$, $(2341232)$, $(34123432)$, $(34123412)$, $(42342312)$,
$(42343121)$, $(42341232)$, $(43412312)$, $(34234232)$, $(34234121)$,
$(41234232)$, $(41234121)$, $(41234312)$, $(34234312)$, $(23412342)$,
$(43423432)$, $(34123121)$, $(23412312)$, $(43412342)$, $(423412312)$,
$(341234312)$, $(341234232)$, $(341234121)$, $(234123412)$, $(412343121)$,
$(412341232)$, $(342343121)$, $(342341232)$, $(434123121)$, $(434234232)$,
$(434234312)$, $(123412312)$, $(423412342)$, $(434123412)$, $(234123121)$,
$(4234123121)$, $(3412343121)$, $(3412341232)$, $(2341234121)$, $(2341234312)$,
$(4123412312)$, $(4342341232)$, $(4234123412)$, $(1234123121)$, $(3423412342)$,
$(4341234232)$, $(4341234121)$, $(43423412342)$, $(23412343121)$,
$(41234123121)$, $(42341234121)$, $(42341234312)$, $(423412343121)$  \\ \hline

$\{3\}$ & $(3)$, $(31)$, $(23)$, $(43)$, $(231)$, $(232)$,
$(343)$, $(431)$, $(423)$, $(1232)$, $(1231)$, $(2321)$, $(4343)$, $(3431)$,
$(3423)$, $(2343)$, $(4231)$, $(4123)$, $(4232)$, $(12312)$, $(12321)$,
$(42343)$, $(43431)$, $(43423)$, $(34123)$, $(34232)$, $(34231)$, $(23431)$,
$(41232)$, $(23423)$, $(41231)$, $(42321)$, $(123121)$, $(423431)$, $(423423)$,
$(434123)$, $(434232)$, $(434231)$, $(412343)$, $(342343)$, $(123423)$,
$(342321)$, $(341231)$, $(123431)$, $(341232)$, $(234232)$, $(234123)$,
$(412312)$, $(412321)$, $(234231)$, $(4234232)$, $(4234123)$, $(3412343)$,
$(4234231)$, $(4342321)$, $(4341231)$, $(4123423)$, $(3423423)$, $(4341232)$,
$(4123431)$, $(3423431)$, $(1234232)$, $(3412312)$, $(1234231)$, $(4342343)$,
$(3412321)$, $(2342321)$, $(2341231)$, $(4123121)$, $(42342321)$, $(42341231)$,
$(34123431)$, $(34123423)$, $(42341232)$, $(43412312)$, $(34234232)$,
$(41234123)$, $(41234232)$, $(34234123)$, $(34234231)$, $(41234231)$,
$(43412321)$, $(12342321)$, $(12341231)$, $(34123121)$, $(43423423)$,
$(43423431)$, $(23412312)$, $(341234232)$, $(341234123)$, $(423412312)$,
$(423412321)$, $(341234231)$, $(434123121)$, $(234123431)$, $(412342321)$,
$(342341231)$, $(342342321)$, $(412341231)$, $(412341232)$, $(342341232)$,
$(434234232)$, $(434234231)$, $(123412312)$, $(234123121)$, $(434123431)$,
$(434123423)$, $(3412342321)$, $(3412341231)$, $(4234123121)$, $(3412341232)$,
$(3423412321)$, $(4123412321)$, $(4123412312)$, $(4342342321)$, $(4234123431)$,
$(4342341232)$, $(1234123121)$, $(4341234232)$, $(4341234231)$, $(34123412321)$,
$(41234123121)$, $(43412342321)$, $(43412341231)$, $(34234123431)$,
$(434234123431)$ \\ \hline

$\{4\}$ & $(4)$, $(41)$, $(34)$, $(42)$, $(341)$, $(421)$,
$(234)$, $(412)$, $(342)$, $(434)$, $(4234)$, $(4343)$, $(4341)$, $(4342)$,
$(4121)$, $(2342)$, $(3421)$, $(3412)$, $(2341)$, $(23412)$, $(12342)$,
$(34121)$, $(12341)$, $(23421)$, $(34234)$, $(41234)$, $(43431)$, $(43421)$,
$(42342)$, $(43432)$, $(43412)$, $(42343)$, $(42341)$, $(423432)$, $(423412)$,
$(423431)$, $(341234)$, $(423421)$, $(434321)$, $(434121)$, $(412342)$,
$(342342)$, $(434312)$, $(412343)$, $(342343)$, $(412341)$, $(342341)$,
$(123412)$, $(234121)$, $(123421)$, $(1234121)$, $(4342343)$, $(4342342)$,
$(4123432)$, $(4123412)$, $(3423432)$, $(3423412)$, $(4123431)$, $(3423431)$,
$(3423421)$, $(4123421)$, $(4234321)$, $(4234121)$, $(4343121)$, $(4234312)$,
$(3412342)$, $(3412343)$, $(3412341)$, $(34123432)$, $(34123412)$, $(34123431)$,
$(42343121)$, $(34123421)$, $(23412341)$, $(41234321)$, $(34234121)$,
$(34234321)$, $(41234121)$, $(41234312)$, $(34234312)$, $(43423432)$,
$(43412342)$, $(43423431)$, $(43412341)$, $(423412341)$, $(434234312)$,
$(423412342)$, $(434123412)$, $(434234232)$, $(434123421)$, $(412343121)$,
$(342343121)$, $(234123431)$, $(341234121)$, $(341234321)$, $(341234312)$,
$(234123412)$, $(3412343121)$, $(2341234121)$, $(2341234312)$, $(4234123431)$,
$(3423412342)$, $(4341234121)$, $(4234123412)$, $(42341234121)$,
$(42341234312)$, $(43423412342)$, $(23412343121)$, $(434234123431)$,
$(423412343121)$ \\ \hline

$\{1, 2\}$ & $(121)$, $(3121)$, $(4121)$, $(23121)$,
$(34121)$, $(43121)$, $(123121)$, $(434121)$, $(343121)$, $(234121)$,
$(423121)$, $(4234121)$, $(4343121)$, $(1234121)$, $(4123121)$, $(42343121)$,
$(41234121)$, $(434123121)$, $(412343121)$, $(342343121)$, $(3412343121)$,
$(1234123121)$, $(4341234121)$, $(23412343121)$, $(41234123121)$,
$(423412343121)$ \\ \hline

$\{1, 3\}$ & $(31)$, $(231)$, $(431)$, $(1231)$, $(2321)$,
$(3431)$, $(4231)$, $(12321)$, $(43431)$, $(34231)$, $(23431)$, $(41231)$,
$(42321)$, $(123121)$, $(423431)$, $(342321)$, $(341231)$, $(123431)$,
$(412321)$, $(234231)$, $(4234231)$, $(4342321)$, $(4341231)$, $(4123431)$,
$(3423431)$, $(1234231)$, $(3412321)$, $(2342321)$, $(2341231)$, $(4123121)$,
$(42342321)$, $(42341231)$, $(34123431)$, $(41234231)$, $(43412321)$,
$(12342321)$, $(12341231)$, $(34123121)$, $(43423431)$, $(341234231)$,
$(423412321)$, $(234123431)$, $(434123121)$, $(342342321)$, $(412342321)$,
$(412341231)$, $(434123431)$, $(3412342321)$, $(3412341231)$, $(4123412321)$,
$(3423412321)$, $(4342342321)$, $(4234123431)$, $(1234123121)$, $(4341234231)$,
$(34123412321)$, $(41234123121)$, $(43412342321)$, $(43412341231)$,
$(34234123431)$, $(434234123431)$ \\ \hline

$\{1, 4\}$ & $(41)$, $(341)$, $(421)$, $(2341)$, $(4121)$,
$(3421)$, $(4341)$, $(34121)$, $(42341)$, $(12341)$, $(23421)$, $(43431)$,
$(234121)$, $(123421)$, $(423431)$, $(423421)$, $(434321)$, $(434121)$,
$(412341)$, $(342341)$, $(1234121)$, $(4234321)$, $(4234121)$, $(3412341)$,
$(4123421)$, $(3423421)$, $(4343121)$, $(4123431)$, $(3423431)$, $(34123431)$,
$(42343121)$, $(43423431)$, $(34123421)$, $(23412341)$, $(41234321)$,
$(34234321)$, $(41234121)$, $(43412341)$, $(341234321)$, $(341234121)$,
$(423412341)$, $(234123431)$, $(412343121)$, $(342343121)$, $(434123421)$,
$(3412343121)$, $(4341234121)$, $(4234123431)$, $(23412343121)$,
$(434234123431)$, $(423412343121)$ \\ \hline

$\{2, 3\}$ & $(232)$, $(1232)$, $(4232)$, $(12312)$,
$(34232)$, $(41232)$, $(123121)$, $(434232)$, $(341232)$, $(234232)$,
$(412312)$, $(4234232)$, $(4341232)$, $(1234232)$, $(3412312)$, $(4123121)$,
$(42341232)$, $(43412312)$, $(34234232)$, $(41234232)$, $(34123121)$,
$(23412312)$, $(341234232)$, $(423412312)$, $(434123121)$, $(412341232)$,
$(342341232)$, $(434234232)$, $(123412312)$, $(234123121)$, $(4234123121)$,
$(3412341232)$, $(4123412312)$, $(1234123121)$, $(4342341232)$, $(4341234232)$,
$(41234123121)$ \\ \hline

$\{2, 4\}$ & $(42)$, $(412)$, $(342)$, $(3412)$, $(4121)$,
$(2342)$, $(23412)$, $(12342)$, $(34121)$, $(42342)$, $(43432)$, $(43412)$,
$(423432)$, $(423412)$, $(123412)$, $(234121)$, $(434121)$, $(412342)$,
$(342342)$, $(434312)$, $(1234121)$, $(4234312)$, $(3412342)$, $(4234121)$,
$(4343121)$, $(4342342)$, $(4123432)$, $(3423432)$, $(4123412)$, $(3423412)$,
$(34123432)$, $(34123412)$, $(42343121)$, $(43423432)$, $(34234121)$,
$(41234121)$, $(43412342)$, $(41234312)$, $(34234312)$, $(341234312)$,
$(434234312)$, $(341234121)$, $(423412342)$, $(234123412)$, $(434123412)$,
$(434234232)$, $(412343121)$, $(342343121)$, $(4234123412)$, $(3412343121)$,
$(3423412342)$, $(2341234121)$, $(4341234121)$, $(2341234312)$, $(43423412342)$,
$(42341234121)$, $(23412343121)$, $(42341234312)$, $(423412343121)$ \\ \hline

$\{3, 4\}$ & $(4343)$, $(43431)$, $(42343)$, $(423431)$,
$(412343)$, $(342343)$, $(3412343)$, $(4123431)$, $(3423431)$, $(4342343)$,
$(34123431)$, $(43423431)$, $(234123431)$, $(4234123431)$, $(434234123431)$ \\ \hline
$\{1, 2, 3\}$ & $(123121)$, $(4123121)$, $(434123121)$,
$(1234123121)$, $(41234123121)$ \\ \hline	

$\{1, 2, 4\}$ & $(4121)$, $(34121)$, $(234121)$, $(434121)$,
$(1234121)$, $(4234121)$, $(4343121)$, $(42343121)$, $(41234121)$,
$(412343121)$, $(342343121)$, $(3412343121)$, $(4341234121)$, $(23412343121)$,
$(423412343121)$ \\ \hline	

$\{1, 3, 4\}$ & $(43431)$, $(423431)$, $(4123431)$,
$(3423431)$, $(34123431)$, $(43423431)$, $(234123431)$, $(4234123431)$,
$(434234123431)$ \\ \hline
\end{tabularx}
\end{center}

\begin{center}\scriptsize  
\begin{tabularx}{\textwidth}{|l|X|}
\hline
\multicolumn{2}{|c|}{Type $D_4$}\\
\hline 
Block ($S$) & Non-Kostant modules ($w$)\\  \hline 
$\emptyset$ & $(1)$, $(2)$, $(3)$, $(4)$, $(41)$, $(21)$, $(24)$, $(23)$, $(12)$,
$(42)$, $(31)$, $(43)$, $(32)$, $(121)$, $(242)$, $(412)$, $(241)$, $(431)$,
$(232)$, $(312)$, $(432)$, $(231)$, $(243)$, $(2412)$, $(2312)$, $(2432)$,
$(4121)$, $(1242)$, $(4232)$, $(2431)$, $(2421)$, $(1241)$, $(2321)$, $(3243)$,
$(3121)$, $(3242)$, $(1232)$, $(2423)$, $(4312)$, $(1231)$, $(24121)$,
$(12412)$, $(12312)$, $(41232)$, $(12431)$, $(12421)$, $(32423)$, $(12321)$,
$(24232)$, $(23121)$, $(32432)$, $(43121)$, $(31242)$, $(24312)$, $(24231)$,
$(32431)$, $(124121)$, $(324232)$, $(324312)$, $(324231)$, $(242312)$,
$(312423)$, $(412321)$, $(243121)$, $(231242)$, $(123121)$, $(124312)$,
$(124231)$, $(312431)$, $(312421)$, $(241232)$, $(3124232)$, $(4123121)$,
$(1241231)$, $(2312431)$, $(3124231)$, $(4231242)$, $(1243121)$, $(3241232)$,
$(3124121)$, $(3242321)$, $(42312431)$, $(31242321)$, $(31241231)$ \\ \hline
$\{1\}$ & $(1)$, $(41)$, $(21)$, $(31)$, $(241)$, $(231)$,
$(121)$, $(431)$, $(2431)$, $(2421)$, $(1241)$, $(2321)$, $(1231)$, $(12421)$,
$(32431)$, $(12321)$, $(24231)$, $(43121)$, $(12431)$, $(124121)$, $(412321)$,
$(123121)$, $(124231)$, $(312431)$, $(312421)$, $(3242321)$, $(1241231)$,
$(3124231)$, $(3124121)$, $(4123121)$, $(2312431)$, $(1243121)$, $(42312431)$,
$(31242321)$, $(31241231)$ \\ \hline

$\{2\}$ & $(2)$, $(12)$, $(42)$, $(32)$, $(412)$, $(121)$,
$(242)$, $(232)$, $(312)$, $(432)$, $(2412)$, $(2312)$, $(2432)$, $(4121)$,
$(1242)$, $(4232)$, $(3121)$, $(3242)$, $(1232)$, $(4312)$, $(24312)$,
$(24121)$, $(12412)$, $(23121)$, $(32432)$, $(43121)$, $(31242)$, $(41232)$,
$(24232)$, $(12312)$, $(124121)$, $(324232)$, $(324312)$, $(242312)$,
$(243121)$, $(231242)$, $(123121)$, $(124312)$, $(241232)$, $(4231242)$,
$(3124232)$, $(4123121)$, $(1243121)$, $(3241232)$, $(3124121)$ \\ \hline

$\{1, 2\}$ & $(121)$, $(43121)$, $(124121)$, $(123121)$,
$(4123121)$, $(3124121)$, $(1243121)$ \\ \hline

$\{1, 3\}$ & $(31)$, $(431)$, $(231)$, $(2431)$, $(1231)$,
$(2321)$, $(24231)$, $(12431)$, $(32431)$, $(12321)$, $(123121)$, $(412321)$,
$(312431)$, $(1241231)$, $(3242321)$, $(4123121)$, $(2312431)$, $(3124231)$,
$(42312431)$, $(31241231)$, $(31242321)$ \\ \hline

$\{1, 2, 3\}$ & $(123121)$, $(4123121)$ \\ \hline

$\{1, 3, 4\}$ & $(431)$, $(12431)$, $(24231)$, $(32431)$,
$(1241231)$, $(3242321)$, $(2312431)$, $(3124231)$, $(42312431)$, $(31241231)$,
$(31242321)$ \\ \hline

\end{tabularx}
\end{center}

Note that most of the BGG complexes resolving a module of 
a dominant singular highest weight are not exact, in contrast 
to the regular case where all the BGG complexes of modules 
of a dominant highest weight are, in fact, exact.

For the exceptional case $F_4$, due to the computational 
complexity, we present only one type of singularity, which 
has the least number of maximal representatives. However, 
the full exposition of all the blocks would probably take 
up several pages.

\begin{center}\scriptsize  
\begin{tabularx}{\textwidth}{|l|X|}
\hline
\multicolumn{2}{|c|}{Type $F_4$}\\
\hline 
Block ($S$) & Non-Kostant modules ($w$)\\  \hline 
$\{2, 3, 4\}$ & $(432343232)$, $(4312343232)$,
$(42312343232)$, $(342312343232)$, $(2342312343232)$, $(3432312343232)$,
$(12342312343232)$, $(23432312343232)$, $(123432312343232)$,
$(3123432312343232)$, $(3234323123431232)$, $(31234323123431232)$,
$(231234323123431232)$ \\ \hline
\end{tabularx}
\end{center}

\section{BGG complexes for balanced quasi-hereditary algebras}\label{s8}

\subsection{Balanced quasi-hereditary algebras}\label{s8.1}

In this section we work in the setup of \cite{mazorchuk2009koszul}. 
Let $A$ be a finite dimensional positively graded quasi-hereditary 
algebra over an algebraically closed field. Denote by 
$\{e_\lambda \colon \lambda \in \Lambda\}$ a complete set 
of pairwise-orthogonal primitive idempotents of $A$ with 
a fixed linear order on $\Lambda$ that
defines the quasi-hereditary structure. Denote by 
$A$-${\operatorname{gmod}}$ the category of all 
finite-dimensional graded $A$-modules, where morphisms are 
homogeneous homomorphisms of degree zero. This is an abelian 
category with enough projectives and enough injectives. 
Denote by $\shift{\cdot}$ the shift in grading: 
$M\shift{i}_j := M_{i+j}$. Denote by $L(\lambda)$, 
$\Delta(\lambda)$, $\nabla(\lambda)$, $P(\lambda)$, 
$I(\lambda)$, and $T(\lambda)$ respectively the simple, 
standard, costandard, projective, injective, and 
tilting module corresponding to $\lambda \in \Lambda$. 
We fix their graded shifts so that $L(\lambda)$, 
$\Delta(\lambda)$ and  $P(\lambda)$ have top in degree zero,
$\nabla(\lambda)$ and $I(\lambda)$ have socle in degree zero,
and $T(\lambda)$ has in degree zero the unique subquotient
isomorphic to $L(\lambda)$.
These modules are called the \emph{structural modules}. If $M$ 
is a structural module, we will say that $M\shift{i}$ is 
{\em centered} at $-i$.

Denote by $\mathcal{D}^b(A)$ the bounded derived category of 
$A$-${\operatorname{gmod}}$. A complex
\begin{displaymath}
X^\bullet = \quad \ldots \to X^{i+1} \to X^{i} \to X^{i-1} \to \ldots
\end{displaymath}
of direct sums of structural modules of the same kind is 
said to be \emph{linear}, provided that all indecomposable 
direct summands of each $X^i$ are centered at $i$. 
Denote by $[i]$ the homological shift normalized such that 
$X[i]^j = X^{i+j}$.

Assume that $A$ is \emph{balanced} in the sense of 
\cite{mazorchuk2009koszul}. This means that each standard 
module $\Delta(\lambda)$ has a linear tilting coresolution 
$0 \to \Delta(\lambda) \to {\mathcal{S}}_\lambda^\bullet$, and
each costandard module $\nabla(\lambda)$ has a linear 
tilting resolution $\mathcal{C}_\lambda^\bullet \to \Delta(\lambda) \to 0$. 
The main examples for our purposes are graded versions of blocks of 
the category ${\mathcal{O}}$, see Subsection~\ref{subsection:graded_O}. 
That these are given by balanced algebras is proved in 
\cite[Proposition~2.7]{mazorchuk2009applications} for the 
regular block, and \cite[Section~4]{mazorchuk2009applications} 
for singular blocks.

In what follows we will use  the following statement, cf. 
\cite[Corollary~7 and Proposition 5]{mazorchuk2009koszul}:

\begin{proposition} \label{propnew76142}
For a balanced quasi-hereditary algebra $A$, the following holds:
\begin{enumerate}[$($a$)$]
\item\label{propnew76142.1} $A$ is \emph{Koszul}, i.e. 
the minimal projective resolution of each $L(\lambda)$ is linear.
\item\label{propnew76142.2} Each $L(\lambda)$ is isomorphic in 
$\mathcal{D}^b(A)$ to a linear complex of tilting modules. 
In other words, there is a linear complex 
$\mathcal{T}_\lambda^\bullet$ of tilting modules such that
\begin{displaymath}
H_i(\mathcal{T}_\lambda^\bullet) = 
\begin{cases} 
L(\lambda), & \text{ if } i=0; \\
0, &  \text{ if } i \neq 0. 
\end{cases} 
\end{displaymath}
\end{enumerate}
\end{proposition}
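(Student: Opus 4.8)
The plan is to construct, for every simple module, a linear complex of tilting modules representing it in $\mathcal{D}^b(A)$ --- this is statement~\eqref{propnew76142.2} --- and then to read off Koszulity~\eqref{propnew76142.1} as a formal consequence. Throughout I would use two standard facts about the structural modules: the orthogonality $\mathrm{Ext}^k_A(\Delta(\mu),\nabla(\nu)\shift{j})=0$ for $(k,j)\neq(0,0)$, with $\mathrm{Hom}_A(\Delta(\mu),\nabla(\nu))$ one-dimensional exactly when $\mu=\nu$, and its consequence that $\mathrm{Ext}^k_A(T(\alpha),T(\beta))=0$ for $k>0$, since $T(\alpha)$ is $\Delta$-filtered and $T(\beta)$ is $\nabla$-filtered.

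To build $\mathcal{T}_\lambda^\bullet$ I would work in the additive category of linear complexes of tilting modules and proceed by induction on the quasi-hereditary order. The balanced hypothesis already provides the linear tilting coresolution $\mathcal{S}_\lambda^\bullet$, which represents $\Delta(\lambda)$ in $\mathcal{D}^b(A)$; for those $\lambda$ with $\Delta(\lambda)=L(\lambda)$ (the base of the induction) one may therefore take $\mathcal{T}_\lambda^\bullet=\mathcal{S}_\lambda^\bullet$. For general $\lambda$ I would feed the short exact sequence $0\to\mathrm{rad}\,\Delta(\lambda)\to\Delta(\lambda)\to L(\lambda)\to0$, together with the tilting complexes already produced for the composition factors $L(\mu)\shift{d}$ ($\mu<\lambda$, $d\geq1$) of $\mathrm{rad}\,\Delta(\lambda)$, into an iterated mapping-cone construction yielding a complex of tiltings representing $L(\lambda)$. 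That this complex can be chosen linear, with homology equal to $L(\lambda)$ concentrated in degree $0$, is the technical heart: the homology is computed from the long exact sequences and the orthogonality above, while linearity is arranged using positivity of the grading, since every factor of $\mathrm{rad}\,\Delta(\lambda)$ sits in strictly positive internal degree, which is exactly what compensates the homological shift introduced by the cone.

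Granting~\eqref{propnew76142.2}, I would deduce~\eqref{propnew76142.1} as follows. Since each $L(\lambda)\cong\mathcal{T}_\lambda^\bullet$ with $\mathcal{T}_\lambda^\bullet$ a bounded complex of tiltings and higher $\mathrm{Ext}$'s between tiltings vanish, the derived $\mathrm{Hom}$ collapses onto the homotopy category, giving
\[
\mathrm{Ext}^k_A(L(\lambda),L(\mu)\shift{j})\cong\mathrm{Hom}_{\mathcal{D}^b(A)}(\mathcal{T}_\lambda^\bullet,\mathcal{T}_\mu^\bullet[k]\shift{j})\cong\mathrm{Hom}_{K^b(A)}(\mathcal{T}_\lambda^\bullet,\mathcal{T}_\mu^\bullet[k]\shift{j}).
\]
By the centering convention and positivity of the grading, a degree-zero chain map between linear complexes can be non-zero only in homological degrees compatible with the internal shift: for $k<j$ there are no non-zero such chain maps, and the complementary off-diagonal contributions are killed by homotopies, so the right-hand side vanishes unless $k=j$. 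This is precisely Koszulity.

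The main obstacle is the construction in~\eqref{propnew76142.2}: keeping the spliced complex linear while identifying its homology. I expect this to be cleanest when organized through the abelian category of linear complexes of tilting modules, in which the balanced hypothesis turns the classes of the $\Delta(\lambda)$ and of the $\nabla(\lambda)$ into standard and costandard objects and the cone bookkeeping becomes functorial, rather than through bare-hands cone juggling; this is the route taken in \cite{mazorchuk2009koszul}.
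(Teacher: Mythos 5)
First, a point of context: the paper does not actually prove this proposition. It is imported wholesale from the literature, as the sentence introducing it indicates (``cf.\ \cite[Corollary~7 and Proposition~5]{mazorchuk2009koszul}''), so the only argument the paper offers is a citation. Your sketch is therefore an attempt to reconstruct the proof of the cited result, and your closing paragraph correctly identifies both the reference and its actual method (the abelian category of linear complexes of tilting modules); the architecture you propose --- establish \eqref{propnew76142.2} first, then deduce Koszulity \eqref{propnew76142.1} --- also matches that reference.

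As a proof, however, the sketch has a genuine gap exactly at what you call the technical heart, and the mechanism you offer there is incorrect. Linearity means that internal degree matches homological position; consequently a composition factor of $\operatorname{rad}\Delta(\lambda)$ lying in internal degree $d\geq 1$ admits a linear tilting representative only when it is placed in homological degree $d$ (namely $\mathcal{T}_\mu^\bullet$ with internal shift $d$ \emph{and} homological shift $d$). But your construction glues the representatives of the composition factors of $\operatorname{rad}\Delta(\lambda)$ position-wise --- realizing an extension as a cone introduces no net homological shift --- and the cone over $0\to\operatorname{rad}\Delta(\lambda)\to\Delta(\lambda)\to L(\lambda)\to 0$ then shifts all of them homologically by exactly $1$, independently of $d$. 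So positivity compensates the cone shift only for factors with $d=1$; every factor with $d\geq 2$ lands off the linear diagonal by $d-1$, and the spliced complex is not linear. Such factors exist whenever some $\Delta(\lambda)$ has Loewy length at least $3$, already in the principal block of category $\mathcal{O}$ for $\mathfrak{sl}_3$, so this is not a marginal case. The defect cannot be repaired factor-by-factor, which is precisely why \cite{mazorchuk2009koszul} works inside the abelian category of linear complexes of tiltings (whose kernels, cokernels and extensions are not the naive ones for complexes); deferring to that category, as you do at the end, is therefore not an organizational convenience but a deferral of the entire content of \eqref{propnew76142.2}. There is a second gap in the deduction of \eqref{propnew76142.1}: the claim that there are no non-zero chain maps off the diagonal requires the $\operatorname{Hom}$-spaces between shifted tilting modules to be concentrated in non-negative degrees, i.e.\ positive grading of the Ringel dual. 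That is not a consequence of positivity of the grading of $A$; it is itself a theorem about balanced algebras (Theorem~1(ii) of \cite{mazorchuk2009koszul}, which the present paper invokes separately in the proof of Theorem~\ref{thmuniquebgg}). The complementary ``killed by homotopies'' claim is likewise asserted without argument; that region is more easily disposed of directly from the minimal graded projective resolution over the positively graded algebra $A$, where it is immediate.
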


Let us mention that a balanced algebra is also necessarily 
\emph{standard Koszul} in the sense of \cite{ADL}, which 
means that standard modules have linear projective resolutions, 
and costandard modules have linear injective coresolutions. 
This is stronger than just Koszulity.

\subsection{BGG complexes for balanced algebras}\label{s8.2}

In this setup, by a BGG complex of $L(\lambda)$ we mean any 
linear complex
\begin{displaymath}
\Delta_\lambda^\bullet = \quad \ldots \to \Delta^1_\lambda 
\to \Delta^0_\lambda \to L(\lambda) \to 0, 
\end{displaymath}
where each $\Delta^i_\lambda$ is a direct sum of standard modules, 
and $\Delta^0_\lambda \to L(\lambda)$ is an epimorphism. If such a
complex happens to be exact, we say it is a BGG resolution 
of $L(\lambda)$.

\begin{proposition}\label{item:prop_gh}
Let $\Delta_\lambda^\bullet$ be a BGG complex of $L(\lambda)$.
\begin{enumerate}[$($a$)$]
\item \label{item:prop_gh1}
There exists a unique (up to scalar) non-trivial homomorphism 
$\xi:{\mathcal{P}}_\lambda^\bullet \to \Delta_\lambda^\bullet$ in the
category of complexes.
\item \label{item:prop_gh2}
There exists a unique (up to scalar) non-trivial homomorphism 
$\eta:{\mathcal{P}}_\lambda^\bullet \to \mathcal{T}_\lambda^\bullet$ in
the homotopy category. This homomorphism is a quasi-isomorphism.
\item \label{item:prop_gh3}
For each homomorphism 
$\tilde{\eta}:{\mathcal{P}}_\lambda^\bullet \to \mathcal{T}_\lambda^\bullet$
in the category of complexes descending to $\eta$ in the homotopy category,
there exist a homomorphism  
$\varphi:\Delta_\lambda^\bullet {\longrightarrow} 
\mathcal{T}_\lambda^\bullet$ of complexes such that the diagram
\begin{equation}\label{eq:prop_gh3}
\xymatrix{ {\mathcal{P}}_\lambda^\bullet \ar[r]^{\xi} \ar[rd]_{\tilde{\eta}} & 
\Delta_\lambda^\bullet \ar[d]^{\varphi} \\ & \mathcal{T}_\lambda^\bullet}
\end{equation}
commutes in the category of complexes. If, moreover, 
$\Delta_\lambda^\bullet$  is exact, then such $\varphi$ is unique and injective, and $\xi$ is surjective.
\end{enumerate}
\end{proposition}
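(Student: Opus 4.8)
The plan is to argue entirely inside the bounded graded derived category $\mathcal{D}^b(A)$, using that $\mathcal{P}_\lambda^\bullet$ is a bounded-above complex of projectives (hence $K$-projective) quasi-isomorphic to $L(\lambda)$, together with a rigidity coming from linearity. Write $\varepsilon\colon\Delta_\lambda^\bullet\to L(\lambda)$ for the augmentation.

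For part (a), I would first observe that there are no nonzero homotopies $\mathcal{P}_\lambda^\bullet\to\Delta_\lambda^\bullet$: a homotopy component $\mathcal{P}^i_\lambda\to\Delta^{i+1}_\lambda$ is a degree-zero homomorphism $P(\mu)\langle-i\rangle\to\Delta(\nu)\langle-i-1\rangle$, equivalently an internal-degree $(-1)$ map $P(\mu)\to\Delta(\nu)$, and this vanishes since $P(\mu)$ is generated in degree $0$ while $\Delta(\nu)$ is concentrated in non-negative degrees. Consequently chain maps and homotopy classes agree, so
\[\operatorname{Hom}_{C(A)}(\mathcal{P}_\lambda^\bullet,\Delta_\lambda^\bullet)=\operatorname{Hom}_{\mathcal{K}(A)}(\mathcal{P}_\lambda^\bullet,\Delta_\lambda^\bullet)=\operatorname{Hom}_{\mathcal{D}^b(A)}(L(\lambda),\Delta_\lambda^\bullet).\]
To see this last space is one-dimensional I would post-compose with $\varepsilon$ to land in $\operatorname{End}_{\mathcal{D}^b(A)}(L(\lambda))=\mathbb{C}$. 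This map is injective, since anything in its kernel factors through the fibre of $\varepsilon$, a complex whose homology is concentrated in positive homological degrees (recall $\varepsilon$ is an isomorphism in internal degree $0$) and which therefore receives no nonzero morphism from $L(\lambda)$; and it is nonzero because projectivity of $\mathcal{P}^0_\lambda=P(\lambda)$ produces a lift $\xi^0$ of $\varepsilon$ which extends to a chain map. Existence of the extension is the one delicate point: the obstruction to continuing the lift past stage $i$ is a degree-zero map from $\mathcal{P}^{i+1}_\lambda$, generated in degree $i+1$, into $H^i(\Delta_\lambda^\bullet)$, and I would show this vanishes because linearity forces the relevant graded component of the homology to be zero. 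This settles both existence and uniqueness up to scalar.

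Part (b) is then quick. By the second part of Proposition~\ref{propnew76142} we have $\mathcal{T}_\lambda^\bullet\cong L(\lambda)$ in $\mathcal{D}^b(A)$, so $K$-projectivity gives $\operatorname{Hom}_{\mathcal{K}(A)}(\mathcal{P}_\lambda^\bullet,\mathcal{T}_\lambda^\bullet)=\operatorname{Hom}_{\mathcal{D}^b(A)}(L(\lambda),L(\lambda))=\mathbb{C}$. A nonzero $\eta$ is a nonzero scalar multiple of the identity of $L(\lambda)$ under this identification, hence an isomorphism in $\mathcal{D}^b(A)$, i.e. a quasi-isomorphism.

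For part (c), in $\mathcal{D}^b(A)$ the class of $\xi$ is the nonzero map $L(\lambda)\to\Delta_\lambda^\bullet$ from (a), and composing with $\varepsilon$ and the identification $L(\lambda)\cong\mathcal{T}_\lambda^\bullet$ yields a candidate $\varphi$ with $\varphi\circ\xi=\eta$ up to homotopy. The main obstacle is to upgrade this to an honest equality $\varphi\xi=\tilde\eta$ in $C(A)$: writing $\varphi\xi-\tilde\eta=d_{\mathcal{T}}h+hd_{\mathcal{P}}$, I would solve $h=k\xi$ for a homotopy $k\colon\Delta_\lambda^\bullet\to\mathcal{T}_\lambda^\bullet$ and then replace $\varphi$ by $\varphi-(d_{\mathcal{T}}k+kd_{\Delta})$, which is homotopic to $\varphi$, is again a chain map, and now satisfies $\varphi\xi=\tilde\eta$ on the nose. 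Both the construction of $\varphi$ and the solvability of $h=k\xi$ I expect to follow from each $\mathcal{T}^i_\lambda$ being tilting, so that $\operatorname{Ext}^{>0}_A(\Delta(\nu),T(\kappa))=0$ and homomorphisms out of the standard-filtered complex $\Delta_\lambda^\bullet$ into the tilting complex are unobstructed. Finally, when $\Delta_\lambda^\bullet$ is exact it is a resolution of $L(\lambda)$ by standard-filtered modules; comparing it with the projective resolution $\mathcal{P}_\lambda^\bullet$ (the cone of $\xi$ is then acyclic) shows that $\xi$ is termwise surjective, whence right-cancellation forces $\varphi$ to be unique, while the canonical embeddings $\Delta(\nu)\hookrightarrow T(\nu)$ at the bottom of the tilting filtration make $\varphi$ termwise injective.
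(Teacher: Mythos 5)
Your treatment of part (b) and your observation that linearity plus positivity of the grading kills all homotopies $\mathcal{P}_\lambda^\bullet\to\Delta_\lambda^\bullet$ agree with the paper. The first genuine gap is in your existence step for (a). You reduce existence to the vanishing of an obstruction, a degree-zero map from $\mathcal{P}_\lambda^{i+1}$ (generated in internal degree $i+1$) into the homology of $\Delta_\lambda^\bullet$ at stage $i$, and you assert that linearity forces the relevant graded component of this homology to vanish. It does not: linearity only says that $\Delta_\lambda^i$ lives in internal degrees $\geq i$ and that the image of $\Delta_\lambda^{i+1}\to\Delta_\lambda^i$ is generated in degree $i+1$, so the degree-$(i+1)$ part of the cycles modulo this image can be nonzero whenever the complex is not exact. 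Concretely, for $\mathfrak{g}=\mathfrak{sl}_2$ and the principal block, the sequence $\Delta(0)\langle-1\rangle\to\Delta(0)\to L(0)\to 0$ with zero differential (zero is the only degree-zero graded map $\Delta(0)\langle-1\rangle\to\Delta(0)$, since $\operatorname{End}(\Delta(0))=\mathbb{C}$ sits in degree $0$) is a BGG complex in the sense of Subsection~\ref{s8.2}; its obstruction space $(\operatorname{rad}\Delta(0))_1$ is nonzero, and every chain map from the minimal projective resolution of $L(0)$ is zero, because the degree-zero component must be a scalar multiple of the identity of $P(0)=\Delta(0)$ that composes to zero with the first differential of the resolution, while the degree-one component lies in $\operatorname{Hom}(P(s\cdot 0)\langle-1\rangle,\Delta(0)\langle-1\rangle)=0$. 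So your argument cannot be repaired without invoking exactness or some nondegeneracy of $\Delta_\lambda^\bullet$ (this example shows claim (a) itself needs such a hypothesis; the paper's appeal to projectivity is the comparison-theorem lifting, which is unobstructed precisely in the exact case). The same caveat applies to your claim that the fibre of $\varepsilon$ receives no nonzero map from $L(\lambda)$: by Koszulity, $\operatorname{Ext}^i$ from $L(\lambda)$ pairs exactly with internal degree $i$, which is where the homology of a non-exact linear complex can sit.

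The second gap is your strictification mechanism in (c). You propose to write $\varphi\xi-\tilde\eta=d_{\mathcal{T}}h+hd_{\mathcal{P}}$ and solve $h=k\xi$ for a homotopy $k\colon\Delta_\lambda^\bullet\to\mathcal{T}_\lambda^\bullet$. This cannot work: by linearity and positivity of the grading of the Ringel dual, \cite[Theorem~1(ii)]{mazorchuk2009koszul} --- exactly the fact the paper uses to prove Theorem~\ref{thmuniquebgg} --- every such $k$ is zero, since $\operatorname{Hom}(\Delta(\nu),T(\kappa))$ is a degree-zero quotient of $\operatorname{Hom}(T(\nu),T(\kappa))$ and hence concentrated in non-negative degrees, so its degree $-1$ component vanishes. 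Your replacement $\varphi\mapsto\varphi-(d_{\mathcal{T}}k+kd_\Delta)$ is therefore the trivial operation and cannot absorb a nonzero $h$; note that nonzero homotopies $\mathcal{P}_\lambda^\bullet\to\mathcal{T}_\lambda^\bullet$ do exist in general, because projectives can map into the negative-degree parts of tiltings. The $\operatorname{Ext}$-vanishing you invoke only yields what the paper gets from \cite[Lemma~III.2.1]{happel1988triangulated}, namely that derived-category morphisms $\Delta_\lambda^\bullet\to\mathcal{T}_\lambda^\bullet$ are realized by honest chain maps; it does not produce the strict factorization through $\xi$. In the exact case your remaining steps also need repair: acyclicity of the cone of $\xi$ does not imply termwise surjectivity (that requires graded Nakayama together with linearity, or the paper's computation), and nothing in your argument identifies the restriction of $\varphi$ to a summand $\Delta(\nu)\langle-i\rangle$ with a canonical embedding into a tilting summand --- this is precisely what the paper extracts by testing against the objects $\nabla(\mu)[i]\langle j\rangle$ and using that linearity forbids homotopies into costandard modules, which is where uniqueness and injectivity of $\varphi$ and surjectivity of $\xi$ come from. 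Your right-cancellation deduction of uniqueness of $\varphi$ from surjectivity of $\xi$ is a nice shortcut, but it sits downstream of the steps that are missing.
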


\begin{proof}
Existence parts of claims \eqref{item:prop_gh1} and \eqref{item:prop_gh2} 
follow from  the fact that ${\mathcal{P}}_\lambda^\bullet$ consists of 
projective modules. The uniqueness part in \eqref{item:prop_gh1}
follows from the linearity of involved complexes. Indeed, since the 
modules in both complexes are centered at the position of their heads, 
from the positivity of the grading it follows that there are no 
non-trivial homotopies between ${\mathcal{P}}_\lambda^\bullet$ and 
$\Delta_\lambda^\bullet$. The uniqueness part in \eqref{item:prop_gh2}
follows from Schur's Lemma.

To prove claim~\eqref{item:prop_gh3}, note that there exists a map 
$\Delta^0_\lambda \to L(\lambda)$, which gives a morphism
in  ${\operatorname{Hom}}_{\mathcal{D}^b(A)}
(\Delta_\lambda^\bullet,\mathcal{T}_\lambda^\bullet)$. 
Since tilting modules have costandard filtrations
and since standard objects are left orthogonal 
to costandard objects, the existence of $\varphi$
follows from \cite[Lemma~III.2.1]{happel1988triangulated}.

If $\Delta_\lambda^\bullet$ is exact, then
all maps in \eqref{eq:prop_gh3} are isomorphisms in 
$\mathcal{D}^b(A)$, in particular, 
for each $\mu \in \Lambda$, $i\in\mathbb{Z}_{\geq 0}$
and $j\in\mathbb{Z}$, the map
\begin{equation}\label{eqnew5523}
\xymatrix{{\operatorname{Hom}}_{\mathcal{D}^b(A)}
(\mathcal{T}_\lambda^\bullet, \nabla(\mu)[i]\langle j\rangle) 
\ar[r]^{- \circ \varphi} &
{\operatorname{Hom}}_{\mathcal{D}^b(A)}
(\Delta_\lambda^\bullet, \nabla(\mu)[i]\langle j\rangle) } 
\end{equation}
is an isomorphism. 

Let $\Delta(\mu)\langle -i\rangle$ be a direct summand of $\Delta_{\lambda}^i$.
Then the unique up to scalar non-zero map $f:\Delta(\mu)\langle -i\rangle\to
\nabla(\mu)\langle -i\rangle$ gives rise to a non-zero element in 
\begin{equation}\label{eqeqnder}
\mathrm{Hom}_{\mathcal{D}^b(A)}(L(\lambda),\nabla(\mu)\langle -i\rangle [i]). 
\end{equation}
This is
due to the combination of \cite[Lemma~III.2.1]{happel1988triangulated},
which allows us to compute extensions directly in the homotopy category
of complexes and the fact that the linearity of $\Delta_\lambda^\bullet$ 
does not allow $f$ to be killed by any homotopy. Note also that 
the extension \eqref{eqeqnder} can also
be computed in the homotopy category using $\mathcal{T}_\lambda^\bullet$,
again thanks to  \cite[Lemma~III.2.1]{happel1988triangulated}.
And again, the positivity of the grading does not allow for any 
homotopies from $\mathcal{T}_\lambda^\bullet$ to $\nabla(\mu)\langle -i\rangle$.
Consequently, by \eqref{eqnew5523}, the restriction of $\varphi$ to the summand 
$\Delta(\mu)\langle -i\rangle$ is uniquely defined. 
From this it follows that $\varphi$ is unique and injective. Similarly one sees that $\xi$ is surjective.
\end{proof}

\subsection{Uniqueness of BGG resolution}\label{s8.3}

Proposition~\ref{item:prop_gh} has the following consequence.

\begin{theorem}\label{thmuniquebgg}
Assume that  $\Delta_\lambda^\bullet$ is exact. Then all 
BGG resolutions of $L(\lambda)$ are isomorphic in the category of complexes.
\end{theorem}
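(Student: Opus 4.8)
The plan is to prove that any two exact BGG complexes of $L(\lambda)$ are isomorphic by exploiting the rigidity established in Proposition~\ref{item:prop_gh}. Let $\Delta_\lambda^\bullet$ and $(\Delta_\lambda^\bullet)'$ be two such exact BGG complexes. First I would apply Proposition~\ref{item:prop_gh}\eqref{item:prop_gh3} to each of them: fixing a single lift $\tilde{\eta}:{\mathcal{P}}_\lambda^\bullet \to \mathcal{T}_\lambda^\bullet$ of the canonical map $\eta$ from claim~\eqref{item:prop_gh2}, I obtain maps of complexes $\varphi:\Delta_\lambda^\bullet \to \mathcal{T}_\lambda^\bullet$ and $\varphi':(\Delta_\lambda^\bullet)' \to \mathcal{T}_\lambda^\bullet$, both fitting into commuting triangles with the respective projection maps $\xi$, $\xi'$ from ${\mathcal{P}}_\lambda^\bullet$. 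Crucially, since both complexes are exact, the same proposition guarantees that $\varphi$ and $\varphi'$ are unique and injective, and that $\xi$, $\xi'$ are surjective.

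The key observation is then that $\varphi$ and $\varphi'$ identify $\Delta_\lambda^\bullet$ and $(\Delta_\lambda^\bullet)'$ with the \emph{same} subcomplex of $\mathcal{T}_\lambda^\bullet$. Indeed, since each $\Delta^i_\lambda$ and $(\Delta^i_\lambda)'$ is a direct sum of standard modules, and since $\varphi$, $\varphi'$ are injective, each realizes $\Delta_\lambda^\bullet$ (resp.\ its primed analogue) as a subcomplex of $\mathcal{T}_\lambda^\bullet$ generated by the images of the standard summands. The content of the argument I would make is that this image is forced: by Proposition~\ref{proposition:X_w} (or, in the abstract balanced setting, by the linearity and exactness constraints), the indexing multiset of standard modules appearing in any exact BGG resolution is uniquely determined, so $\Delta_\lambda^\bullet$ and $(\Delta_\lambda^\bullet)'$ have, in each homological degree, the same standard summands up to the prescribed grading shift. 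The uniqueness-of-$\varphi$ argument in the proof of Proposition~\ref{item:prop_gh}, carried out summand-by-summand using the non-degeneracy of the pairing $\Delta(\mu) \to \nabla(\mu)$, shows that the restriction of $\varphi$ to each summand $\Delta(\mu)\langle -i\rangle$ is pinned down up to scalar; the same holds for $\varphi'$. Hence the images $\varphi(\Delta_\lambda^\bullet)$ and $\varphi'((\Delta_\lambda^\bullet)')$ coincide as subcomplexes of $\mathcal{T}_\lambda^\bullet$.

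Consequently I would form the composite $\psi := (\varphi')^{-1}\circ\varphi : \Delta_\lambda^\bullet \to (\Delta_\lambda^\bullet)'$, where $(\varphi')^{-1}$ denotes the inverse of $\varphi'$ onto its image. Since $\varphi'$ is injective and its image equals that of $\varphi$, this composite is a well-defined isomorphism of complexes. It remains to check that $\psi$ is compatible with the augmentations to $L(\lambda)$, i.e.\ that it is genuinely an isomorphism of BGG complexes and not merely of the truncated parts. This follows because both $\xi$ and $\xi'$ are surjective and factor the same map ${\mathcal{P}}_\lambda^\bullet \to L(\lambda)$ through $\tilde\eta$; chasing the commuting triangles shows $\psi$ intertwines the augmentation maps $\Delta^0_\lambda \to L(\lambda)$ and $(\Delta^0_\lambda)' \to L(\lambda)$.

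The main obstacle I anticipate is the summand-matching step in the second paragraph: one must verify that the two exact complexes really do have identical standard-module content in each degree, with matching grading shifts, before the inverse of $\varphi'$ can be composed with $\varphi$. In the category $\mathcal{O}$ setting this is exactly the force of Proposition~\ref{proposition:X_w}, but to prove Theorem~\ref{thmuniquebgg} in the full generality of balanced quasi-hereditary algebras one cannot invoke that result and must instead extract the uniqueness of the multiset directly from the linearity of the complexes and the uniqueness clause of Proposition~\ref{item:prop_gh}\eqref{item:prop_gh3}. The cleanest way around this is to avoid matching summands explicitly altogether: observe that $\varphi$ and $\varphi'$ are both monomorphisms of complexes with the \emph{same} image (namely the unique minimal subcomplex of $\mathcal{T}_\lambda^\bullet$ quasi-isomorphic to $L(\lambda)$ and built from standard modules), so the isomorphism $\psi$ is produced abstractly without ever enumerating the summands. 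I expect most of the work to lie in justifying that this common image is well-defined, which is precisely where the injectivity and uniqueness parts of Proposition~\ref{item:prop_gh} do the heavy lifting.
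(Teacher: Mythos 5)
Your overall strategy coincides with the paper's: fix one lift $\tilde{\eta}$, use Proposition~\ref{item:prop_gh}\eqref{item:prop_gh3} to embed both exact BGG resolutions into $\mathcal{T}_\lambda^\bullet$, show the two images coincide, and compose one embedding with the inverse of the other. The gap lies exactly in the step you flag as the main obstacle: the claim that the images coincide is never actually established. Your first route invokes Proposition~\ref{proposition:X_w}, which is a statement about blocks of category $\mathcal{O}$, whereas Theorem~\ref{thmuniquebgg} is stated for arbitrary balanced quasi-hereditary algebras, where $X_w$ and the M\"obius combinatorics do not exist (as you acknowledge); it also requires identifying ``corresponding summands'' across two different complexes, which presupposes part of what is being proved. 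Your second route simply asserts that both images equal ``the unique minimal subcomplex of $\mathcal{T}_\lambda^\bullet$ quasi-isomorphic to $L(\lambda)$ and built from standard modules''; the existence and uniqueness of such a subcomplex is precisely the content of the theorem, so as written this is circular, and the promised ``heavy lifting'' by Proposition~\ref{item:prop_gh} is not carried out.

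The missing step is a one-line argument using only ingredients you already have. Since the triangle \eqref{eq:prop_gh3} commutes in the category of complexes and $\xi$ is surjective (because $\Delta_\lambda^\bullet$ is exact), in each homological degree
\[
\varphi(\Delta^i_\lambda)\;=\;\varphi\bigl(\xi(\mathcal{P}^i_\lambda)\bigr)\;=\;\tilde{\eta}(\mathcal{P}^i_\lambda),
\]
so $\mathrm{Im}\,\varphi=\mathrm{Im}\,\tilde{\eta}$ as subcomplexes of $\mathcal{T}_\lambda^\bullet$; likewise $\mathrm{Im}\,\varphi'=\mathrm{Im}\,\tilde{\eta}$, because you chose the \emph{same} $\tilde{\eta}$ for both complexes. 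Hence the images coincide with no summand-matching and no category-$\mathcal{O}$ input, and your $\psi=(\varphi')^{-1}\circ\varphi$ is the desired isomorphism of complexes (compatibility with the augmentations holds after rescaling, since $\psi$ induces an automorphism, i.e.\ a scalar, on the zeroth homology $L(\lambda)$). This is in essence the paper's proof; the paper additionally shows that there are no homotopies from $\Delta_\lambda^\bullet$ to $\mathcal{T}_\lambda^\bullet$, using linearity together with the fact that the Ringel dual of a balanced algebra is positively graded, see \cite[Theorem~1(ii)]{mazorchuk2009koszul}. That extra argument makes $\varphi$ independent even of the choice of $\tilde{\eta}$, so that $\mathrm{Im}\,\varphi$ is a canonical subcomplex of $\mathcal{T}_\lambda^\bullet$; for the statement of the theorem itself, fixing a single $\tilde{\eta}$, as you do, suffices.
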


\begin{proof}
By  Proposition~\ref{item:prop_gh}, any BGG resolution of $L(\lambda)$
is a subcomplex of $\mathcal{T}_\lambda^\bullet$, namely, the image of 
the injective morphism $\varphi$  constructed in the proof. Since
the Ringel dual of a balanced quasi-hereditary algebra is
positively graded, see \cite[Theorem~1(ii)]{mazorchuk2009koszul},
and both $\Delta_\lambda^\bullet$ and $\mathcal{T}_\lambda^\bullet$
are linear, there are no homotopies from 
$\Delta_\lambda^\bullet$ to $\mathcal{T}_\lambda^\bullet$.
Therefore the morphism $\varphi$ in the proof of
Proposition~\ref{item:prop_gh} does not depend on the choice of 
$\tilde{\eta}$. In particular, the image of the injective morphism
$\varphi$ is a canonical subcomplex of $\mathcal{T}_\lambda^\bullet$
isomorphic to $\Delta_\lambda^\bullet$ and independent of
the choice of $\Delta_\lambda^\bullet$.
\end{proof}

\section{BGG complexes for ${\mathcal{S}}$-subcategories in ${\mathcal{O}}$}\label{s9}

\subsection{${\mathcal{S}}$-subcategories in ${\mathcal{O}}$}\label{s9.1}

In this section we will apply our results on singular BGG resolutions to
construct analogues of BGG resolutions for certain generalization of 
category $\mathcal{O}$ which are no longer described by quasi-hereditary
algebras but rather by the so-called {\em standardly stratified} algebras,
see \cite{FKM1}, or, even, {\em properly stratified} algebras, see \cite{Dl}. 
These are the so-called 
{\em ${\mathcal{S}}$-subcategories in ${\mathcal{O}}$} as defined in 
\cite{futorny2000s}. The interest in ${\mathcal{S}}$-subcategories in 
${\mathcal{O}}$ is motivated by the study of the structure of parabolically
induced modules for Lie algebra, see \cite{MS} and references therein. 
As it turns out, 
in many cases, parabolically induced modules naturally belong to
certain categories equivalent to ${\mathcal{S}}$-subcategories in 
${\mathcal{O}}$. This, in particular, implies that composition multiplicities
of parabolically induced modules can be described using KL-polynomials.
Let us briefly recall the definition of ${\mathcal{S}}$-subcategories in 
${\mathcal{O}}$.

Let $\lambda$ be a dominant integral weight and $W_{\lambda}$ the stabilizer
of $\lambda+\rho$. Denote by ${}^{\lambda}\tilde{W}$ the set of longest
coset representatives in $W_{\lambda}\backslash W$. Let $\mathcal{I}_{\lambda}$
denote the Serre subcategory of $\mathcal{O}_{0}$ generated by all
$L(w\cdot 0)$, where $w\notin {}^{\lambda}\tilde{W}$. Then the Serre quotient category
$\mathcal{S}_{\lambda}:=\mathcal{O}_{0}/\mathcal{I}_{\lambda}$ is an abelian
category and we denote by $\pi:\mathcal{O}_{0}\to \mathcal{S}_{\lambda}$
the canonical projection. 

Let $A$ be a basic finite dimensional associative algebra such that
$A$-mod is equivalent to $\mathcal{O}_{0}$. Let $\displaystyle 1=\sum_{w\in W}e_w$ 
be a primitive decomposition of the identity in $A$, where $e_w$ corresponds to
$L(w\cdot 0)$. Then $\mathcal{S}_{\lambda}$ is equivalent to $eAe$-mod,
where 
\begin{displaymath}
e=e(\lambda)=\sum_{w\in{}^{\lambda}\tilde{W}}e_w.
\end{displaymath}
The algebra $eAe$ is properly stratified in the sense of \cite{Dl}
with $\pi(\Delta(w\cdot 0))$, where $w\in {}^{\lambda}\tilde{W}$, 
being a complete and irredundant list of representatives of proper 
standard objects with respect to the properly stratified structure. 
Note that, for $w_1,w_2\in W$, we have  
$\pi(\Delta(w_1\cdot 0))\cong \pi(\Delta(w_2\cdot 0))$ if
and only if $w_1=xw_2$, for some $x\in W_{\lambda}$.

From the above, it follows that the set 
$\{\pi(L(w\cdot 0))\,:\,w\in {}^{\lambda}\tilde{W}\}$ 
is a complete and irredundant 
list of representatives of isomorphism classes of simple objects in 
$\mathcal{S}_{\lambda}$.

\subsection{Soergel's equivalence}\label{s9.2}

Denote by $\underline{\mathcal{O}}$ the full subcategory in $\mathcal{O}$
consisting of all modules with the property that the center of the
universal enveloping algebra acts by scalars on each indecomposable
direct summand. Note that all simple and all Verma modules are in
$\underline{\mathcal{O}}$. For simplicity, let us restrict to 
the principal block of $\mathcal{O}$ and $\underline{\mathcal{O}}$. 

Recall, see \cite[Kapitel~6]{Ja}, that $\mathcal{O}_0$ is equivalent to the
category ${}^{\infty}_{\,\,\,0}\mathcal{H}_{0}^1$ of finitely generated 
Harish-Chandra  $\mathfrak{g}$-$\mathfrak{g}$-bimodules having generalized
central character of $\mathcal{O}_0$ on the left and genuine 
central character of $\mathcal{O}_0$ on the right. Under this equivalence,
the subcategory $\underline{\mathcal{O}}_0$ corresponds to 
the full subcategory ${}^{1}_0\mathcal{H}_{0}^1$ of 
${}^{\infty}_{\,\,\,0}\mathcal{H}_{0}^1$ consisting of all objects having a 
genuine central character on the left. As pointed out in \cite{So0}, 
swapping the sides of bimodules induces a self-equivalence of 
${}^{1}_0\mathcal{H}_{0}^1$ and we obtain the following claim:

\begin{proposition}[\cite{So0}]\label{propsoer}
There is an involutive self-equivalence $\Psi$ of $\underline{\mathcal{O}}_0$
such that $\Psi(L(w\cdot 0))\cong L(w^{-1}\cdot 0)$.
\end{proposition}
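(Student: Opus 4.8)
The plan is to transport the problem to Harish-Chandra bimodules, where the wanted self-equivalence is literally given by interchanging the left and right actions, and then to read off its effect on simple objects. By the equivalence recalled above (from \cite{Ja}), $\underline{\mathcal{O}}_0$ is equivalent to ${}^{1}_0\mathcal{H}_{0}^1$, so it suffices to construct an involutive self-equivalence of ${}^{1}_0\mathcal{H}_{0}^1$ that interchanges the labels $w$ and $w^{-1}$.

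First I would set up the swap functor. A $\mathfrak{g}$-$\mathfrak{g}$-bimodule is the same as a left module over $\mathcal{U}:=U(\mathfrak{g})\otimes U(\mathfrak{g})^{\mathrm{op}}$. Let $\tau$ denote the Chevalley anti-involution of $U(\mathfrak{g})$ (the identity on $\mathfrak{h}$, interchanging $\mathfrak{n}^+$ and $\mathfrak{n}^-$). Then $\sigma(a\otimes b):=\tau(b)\otimes\tau(a)$ is an algebra automorphism of $\mathcal{U}$ with $\sigma^2=\mathrm{id}$, and I would define the swap $\mathsf{s}$ as restriction of scalars along $\sigma$. Concretely, $\mathsf{s}$ turns the old right action (twisted by $\tau$) into the new left action and vice versa. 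Restriction along an algebra automorphism is an exact covariant auto-equivalence, and $\sigma^2=\mathrm{id}$ makes $\mathsf{s}$ involutive.

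Next I would verify that $\mathsf{s}$ preserves ${}^{1}_0\mathcal{H}_{0}^1$. The Harish-Chandra condition (local finiteness with finite multiplicities of the adjoint action) is symmetric in the two one-sided structures, hence preserved. For the central characters, since $\tau$ fixes the center $Z(U(\mathfrak{g}))$ pointwise, $\mathsf{s}$ again carries the genuine character $\chi_0$ on each side; this is precisely the point at which it is essential that both characters be genuine and equal to $\chi_0$, as on ${}^{\infty}_{\,\,\,0}\mathcal{H}_{0}^1$ the swap would exchange the genuine right character with the generalized left one and leave the category. Transporting $\mathsf{s}$ through the equivalence then yields an involutive self-equivalence $\Psi$ of $\underline{\mathcal{O}}_0$.

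The remaining, and main, step is to compute $\Psi$ on simple objects, and this is where the bookkeeping of conventions is the only delicate point. The simple objects of ${}^{1}_0\mathcal{H}_{0}^1$ form a family $\{\mathcal{L}_w:w\in W\}$, where $\mathcal{L}_w$ is the simple bimodule corresponding to $L(w\cdot 0)$ under Soergel's equivalence $X\mapsto X\otimes_{U(\mathfrak{g})}\Delta(0)$; here the right structure of $X$ is used to form the tensor product while the left structure records the resulting object of $\mathcal{O}_0$. Since $\mathsf{s}$ interchanges these two roles, $\mathsf{s}(\mathcal{L}_w)$ is the simple bimodule whose left and right labels are swapped relative to $\mathcal{L}_w$, and renormalizing to the standard parameterization (in which the right label is trivial) replaces $w$ by $w^{-1}$. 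Equivalently, realizing $\mathcal{L}_w$ as the unique simple sub-bimodule of the Harish-Chandra part of $\operatorname{Hom}_{\mathbb{C}}(\Delta(0),\Delta(w\cdot 0))$ and applying $\tau$-transposition of homomorphisms carries it to the corresponding object for $w^{-1}$. Once this identification is settled we obtain $\Psi(L(w\cdot 0))\cong L(w^{-1}\cdot 0)$, as claimed.
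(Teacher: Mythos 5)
Your overall route is the same as the paper's, which essentially consists of citations: \cite{Ja} for the equivalence of $\underline{\mathcal{O}}_0$ with the bimodule category ${}^{1}_0\mathcal{H}_{0}^1$, and \cite{So0} for the fact that swapping the two sides gives a self-equivalence sending the simple labelled $w$ to the one labelled $w^{-1}$. Your construction of the swap functor is correct and carefully done: twisting by the Chevalley anti-involution $\tau$ (which fixes the center pointwise) is the right way to turn right actions into left actions while preserving both genuine central characters, and your observation about why the swap does not survive on ${}^{\infty}_{\,\,\,0}\mathcal{H}_{0}^1$ matches the remark after the proposition in the paper. The genuine gap is the final step, $\Psi(L(w\cdot 0))\cong L(w^{-1}\cdot 0)$, which is the actual content of the proposition. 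Your first argument --- ``swap the two labels and renormalize to the parameterization in which the right label is trivial'' --- is circular: a simple Harish--Chandra bimodule does not carry two independent labels, and the assertion that the pair $(e,w)$ renormalizes to $(w^{-1},e)$ is precisely what has to be proved; nothing in your construction justifies it.

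Your fallback ``equivalently'' argument is moreover false as stated. Under the Bernstein--Gelfand equivalence $X\mapsto X\otimes_{U(\mathfrak{g})}\Delta(0)$ (whose quasi-inverse is exactly $M\mapsto$ the Harish--Chandra part of $\operatorname{Hom}_{\mathbb{C}}(\Delta(0),M)$), the bimodule you consider corresponds to $\Delta(w\cdot 0)$ itself. An equivalence of abelian categories preserves socles, and $\operatorname{soc}\Delta(w\cdot 0)\cong L(w_0\cdot 0)$ for \emph{every} $w$ in the regular block; hence the unique simple sub-bimodule of the Harish--Chandra part of $\operatorname{Hom}_{\mathbb{C}}(\Delta(0),\Delta(w\cdot 0))$ is the simple bimodule attached to $w_0$, independent of $w$ --- not your $\mathcal{L}_w$. (What is true: $\mathcal{L}_w$ is the unique simple \emph{quotient} of that bimodule, equivalently the unique simple sub-bimodule of the Harish--Chandra part of $\operatorname{Hom}_{\mathbb{C}}(\Delta(0),\nabla(w\cdot 0))$, where $\nabla$ denotes the dual Verma module.) Even after this repair, the claim that $\tau$-transposition carries this object to the corresponding object for $w^{-1}$ is not mere bookkeeping: transposition produces a bimodule of homomorphisms between \emph{dual} Verma modules, and identifying its socle with the simple bimodule labelled $w^{-1}$ is exactly the nontrivial theorem (it is closely related to the classical fact that the two one-sided annihilators of this simple bimodule are the primitive ideals attached to $w$ and to $w^{-1}$). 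To close the gap you must either cite this computation, as the paper does with \cite{So0} (it can also be extracted from \cite{Ja}, Kapitel 6--7), or give an actual argument, for instance via the geometric realization of simple Harish--Chandra bimodules by $G$-orbits on $G/B\times G/B$, where swapping the two factors visibly sends the orbit indexed by $w$ to the one indexed by $w^{-1}$.
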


Note that $\Psi$ does not extend to ${}^{\infty}_{\,\,\,0}\mathcal{H}_{0}^1$
or ${\mathcal{O}}_0$ due to asymmetry of the requirements for the left and right 
actions of the center in the definition of ${}^{\infty}_{\,\,\,0}\mathcal{H}_{0}^1$.

\subsection{BGG complexes and resolutions}\label{s9.3}

A {\em BGG complex} in $\mathcal{S}_{\lambda}$ is a complex in which 
each component is isomorphic to a direct sum of proper standard objects.
If a {\em BGG complex} has a unique homology and that homology is isomorphic to
$\pi(L(w\cdot 0))$, where $w\in {}^{\lambda}\tilde{W}$, we will call such a
complex a {\em BGG resolution} of $\pi(L(w\cdot 0))$. Now we can formulate
our main result in this section.

\begin{theorem}\label{thmnew9557}
For $w\in {}^{\lambda}\tilde{W}$, the following assertions are equivalent:
\begin{enumerate}[$($a$)$]
\item\label{thmnew9557.1} $\pi(L(w\cdot 0))$ has a BGG resolution.
\item\label{thmnew9557.2} $L(w^{-1}\cdot \lambda)$ is a Kostant module
in the sense of Subsection~\ref{subsection:Kostant}.
\end{enumerate}
\end{theorem}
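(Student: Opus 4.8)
The plan is to translate both assertions into statements about the regular BGG complexes in $\mathcal{O}_0$ and then to match them by means of Soergel's equivalence $\Psi$ of Proposition~\ref{propsoer}, after reducing assertion~\eqref{thmnew9557.2} to exactness of a singular BGG complex through Proposition~\ref{item:kostantprop}. The underlying combinatorial fact I would record first is that inversion $y\mapsto y^{-1}$ is a length-preserving automorphism of the Bruhat poset $(W,\leq)$ which carries the longest coset representatives ${}^{\lambda}\tilde{W}$ of $W_\lambda\backslash W$ bijectively onto the longest coset representatives $\tilde{W}^\lambda$ of $W/W_\lambda$; in particular $w\in{}^{\lambda}\tilde{W}$ forces $w^{-1}\in\tilde{W}^\lambda$, inversion intertwines the two copies of $\mu^\lambda$, and $y\notin{}^{\lambda}\tilde{W}$ if and only if $y^{-1}\notin\tilde{W}^\lambda$. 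By Proposition~\ref{item:kostantprop}, assertion~\eqref{thmnew9557.2} is equivalent to exactness of the singular BGG complex~\eqref{equation:singular_BGG} of $L(w^{-1}\cdot\lambda)$; and since this complex is obtained by applying the exact functor $T_0^\lambda$ to the regular BGG complex~\eqref{equation:regular_BGG} of $L(w^{-1}\cdot 0)$ and cutting off the equalities (which preserves homology by Lemma~\ref{lemma:cutting_off}), it is exact if and only if $T_0^\lambda$ annihilates every higher homology module of that regular complex.

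On the other side, I would construct the BGG complex of $\pi(L(w\cdot 0))$ in $\mathcal{S}_\lambda$ by copying the construction of Section~\ref{s4} verbatim, with the exact quotient functor $\pi$ replacing $T_0^\lambda$. Applying $\pi$ to the regular BGG complex of $L(w\cdot 0)$, one uses that $\pi(\Delta(x\cdot 0))$ depends only on the left coset $W_\lambda x$ and cuts off the resulting equalities via Lemma~\ref{lemma:cutting_off}, obtaining a canonical complex of proper standard objects indexed by the inversion-image of $X_{w^{-1}}$ inside ${}^{\lambda}\tilde{W}$. A uniqueness argument parallel to Proposition~\ref{proposition:X_w}, resting on the linearity of the graded lift of $\pi$ applied to the regular complex, shows that $\pi(L(w\cdot 0))$ admits a BGG resolution if and only if this canonical complex is exact, equivalently if and only if $\pi$ annihilates every higher homology module of the regular BGG complex of $L(w\cdot 0)$.

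It then remains to identify the two annihilation conditions, and this is where $\Psi$ enters. All terms and all homology modules of the two regular BGG complexes lie in $\underline{\mathcal{O}}_0$, on which $\Psi$ is an exact self-equivalence with $\Psi(L(y\cdot 0))\cong L(y^{-1}\cdot 0)$. Granting that $\Psi$ carries the regular BGG complex of $L(w^{-1}\cdot 0)$ to that of $L(w\cdot 0)$ (possibly after applying the simple-preserving duality), exactness of $\Psi$ gives $H^i$ of the latter as the image under $\Psi$ of $H^i$ of the former, so that the composition factors are matched with all Weyl-group labels inverted. Because $\pi$ kills precisely the simples $L(y\cdot 0)$ with $y\notin{}^{\lambda}\tilde{W}$, while $T_0^\lambda$ kills precisely those with $y\notin\tilde{W}^\lambda$, and these two families of labels correspond under inversion, the condition that $\pi$ annihilates every higher homology module for $w$ is equivalent to the condition that $T_0^\lambda$ annihilates every higher homology module for $w^{-1}$. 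Chaining the equivalences established in the three paragraphs yields the equivalence of \eqref{thmnew9557.1} and \eqref{thmnew9557.2}.

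The step I expect to be the main obstacle is the parenthetical granting in the previous paragraph, namely determining the effect of $\Psi$ on standard objects: the identity $\Psi(L(y\cdot 0))\cong L(y^{-1}\cdot 0)$ does not by itself fix $\Psi$ on Verma modules, and one must show that $\Psi(\Delta(y\cdot 0))$ is $\Delta(y^{-1}\cdot 0)$, or, up to the duality, the dual Verma $\nabla(y^{-1}\cdot 0)$, so that the two regular complexes are genuinely interchanged and the homological-degree bookkeeping survives the possible passage to dual Vermas. The symmetry $P_{x,y}=P_{x^{-1},y^{-1}}$ of Kazhdan--Lusztig polynomials secures the combinatorial half of this matching, and the equivalence of \cite{MS} is the natural device for the module-theoretic half. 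A smaller point still to be checked is the uniqueness reduction in the properly stratified category $\mathcal{S}_\lambda$: it is not covered by Theorem~\ref{thmuniquebgg}, which applies to balanced quasi-hereditary algebras, and must instead be deduced directly from the linearity of the graded lift, exactly as in the proof of Proposition~\ref{proposition:X_w}.
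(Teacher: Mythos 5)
Your proposal shares the paper's skeleton: reduce assertion~(\emph{b}) to exactness of the singular BGG complex via Proposition~\ref{item:kostantprop}, exploit that $T_0^\lambda$ kills exactly the simples $L(y\cdot 0)$ with $y\notin\tilde{W}^\lambda$ while $\pi$ kills exactly those with $y\notin{}^{\lambda}\tilde{W}$, and match the two families through Soergel's equivalence $\Psi$ and the inversion anti-automorphism. The fact you single out as the main obstacle --- that $\Psi(\Delta(y\cdot 0))\cong\Delta(y^{-1}\cdot 0)$ --- is indeed needed, but it does not differentiate your argument from the paper's: the paper uses it just as silently, when it applies $\Psi$ to a complex whose terms are direct sums of Verma modules and then declares the translation of $\Psi(Q^\bullet)$ to the wall to be a resolution \emph{by Verma modules}. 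This is a standard property of Soergel's functor (exactness plus its effect on principal series bimodules), so treating it as known is legitimate on both sides.

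The genuine divergence, and the real hole in your plan, is the direction (\emph{a})$\Rightarrow$(\emph{b}). You reduce (\emph{a}) to exactness of a canonical complex built inside $\mathcal{S}_\lambda$ by pushing the regular complex \eqref{equation:regular_BGG} through $\pi$ and cutting equalities, and you then need the implication ``some BGG resolution of $\pi(L(w\cdot 0))$ exists $\Rightarrow$ this canonical complex is exact''. That is a uniqueness theorem inside the properly stratified category $\mathcal{S}_\lambda$, and you correctly observe that Theorem~\ref{thmuniquebgg} does not cover it; but calling the replacement ``a smaller point'' understates it. Re-running Proposition~\ref{proposition:X_w} over $eAe$ requires graded lifts of proper standard objects, the gradability/linearity of an \emph{arbitrary} given resolution (not merely of your canonical complex), and an analogue of predictable occurrences for $eAe$ --- none of which the paper develops, precisely because its proof is structured to avoid ever needing uniqueness in $\mathcal{S}_\lambda$. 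The paper instead lifts the given resolution $Q^\bullet$ from $\mathcal{S}_\lambda$ to $\mathcal{O}_0$, using that $\pi$ is full and faithful on direct sums of $\Delta(x\cdot 0)$ with $x\in{}^{\lambda}\tilde{W}$ (their tops and socles survive $\pi$), then applies $\Psi$ and translates to the wall; the resulting resolution lives in $\mathcal{O}_\lambda$, where Proposition~\ref{item:kostantprop}, backed by Proposition~\ref{proposition:X_w} for the balanced quasi-hereditary algebra of $\mathcal{O}_\lambda$, finishes the argument. If you replace your uniqueness-in-$\mathcal{S}_\lambda$ step by this lifting argument, your proof closes. Your direction (\emph{b})$\Rightarrow$(\emph{a}), which needs no uniqueness, is sound as proposed and is essentially the paper's: the paper realizes your ``canonical complex'' concretely as $\pi(\Psi(\tilde{R}^\bullet))$, where $\tilde{R}^\bullet$ is the subcomplex of \eqref{equation:regular_BGG} indexed by $X_{w^{-1}}$, whose translation to the wall is the exact complex \eqref{equation:singular_BGG}.
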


\begin{proof}
Assume claim~\eqref{thmnew9557.1}. Let 
\begin{displaymath}
Q^{\bullet}:
\dots\to Q^2\to Q^1\to Q^1\to \pi(L(w\cdot 0))\to 0 
\end{displaymath}
be a BGG resolution of $\pi(L(w\cdot 0))$ in $\mathcal{S}_{\lambda}$.
Since all tops and all socles for all components in $Q^{\bullet}$,
considered as objects in $\mathcal{O}$, are simple modules of the form 
$L(x\cdot 0)$, with $x\in {}^{\lambda}\tilde{W}$, homomorphisms between
these components in $\mathcal{O}$ and in $\mathcal{S}_{\lambda}$ coincide.
Therefore we may consider $Q^{\bullet}$ as a complex in $\mathcal{O}$.
Note, however, that the fact that $Q^{\bullet}$ is exact in 
$\mathcal{S}_{\lambda}$ only means that, as a complex in 
$\mathcal{O}$, each simple subquotient of any homology in $Q^{\bullet}$
is isomorphic to $L(x\cdot 0)$, where $x\notin {}^{\lambda}\tilde{W}$.

Next we observe that all components of $Q^{\bullet}$ are, in fact,
objects in $\underline{\mathcal{O}}$. Therefore we may apply the equivalence
$\Psi$ to obtain a complex $\Psi(Q^{\bullet})$ of the form
\begin{displaymath}
\dots\to \Psi(Q^2)\to \Psi(Q^1)\to \Psi(Q^1)\to L(w^{-1}\cdot 0)\to 0. 
\end{displaymath}
Each simple subquotient of any homology in $\Psi(Q^{\bullet})$
is isomorphic to $L(x\cdot 0)$, where $x\notin \tilde{W}^{\lambda}$.
The latter means that translation of $\Psi(Q^{\bullet})$ to the
$\lambda$-wall gives an exact complex, that is, the module 
$L(w^{-1}\cdot \lambda)$ has a BGG resolution. Therefore 
claim~\eqref{thmnew9557.2} follows from Proposition~\ref{item:kostantprop}.

To prove that claim~\eqref{thmnew9557.2} implies claim~\eqref{thmnew9557.1},
we simply reverse the above arguments. If $L(w^{-1}\cdot \lambda)$ is a 
Kostant module in the sense of Subsection~\ref{subsection:Kostant},
then $L(w^{-1}\cdot \lambda)$ has a BGG resolution
by Proposition~\ref{item:kostantprop}. Call this resolution $\hat{R}^{\bullet}$.
Consider the BGG complex $R^{\bullet}$ for $L(w^{-1}\cdot 0)$. 
Let $\tilde{R}^{\bullet}$ be the part of 
$R^{\bullet}$ indexed by elements in $X_{w^{-1}}$.
From Section~\ref{s4}, it follows that any arrow starting at an element
in $X_{w^{-1}}$ goes to an element in $X_{w^{-1}}$. This implies that 
$\tilde{R}^{\bullet}$ is a subcomplex of $R^{\bullet}$ and we also have that
$\hat{R}^{\bullet}$ is a translation of $\tilde{R}^{\bullet}$ to the $\lambda$-wall.
Since $\hat{R}^{\bullet}$ is exact, each simple subquotient of any homology 
in $\tilde{R}^{\bullet}$
is isomorphic to $L(x\cdot 0)$, where $x\notin \tilde{W}^{\lambda}$.
Now, applying $\Psi$ and then $\pi$, we get that
$\pi(\Psi(\tilde{R}^{\bullet}))$ is a BGG resolution of $\pi(L(w\cdot 0))$.
This completes the proof.
\end{proof}

Theorem~\ref{thmnew9557} extends and corrects the main result of \cite{futorny1998bgg}.

For $w\in {}^{\lambda}\tilde{W}$, let $\nabla(w)$ denote the maximal submodule
of the indecomposable injective envelope of $\pi(L(w\cdot 0))$ in 
$\mathcal{S}_{\lambda}$ which has the property that all composition 
subquotients of $\nabla(w)$ (in $\mathcal{S}_{\lambda}$) are isomorphic 
to $\pi(L(x\cdot 0))$, where $x\in {}^{\lambda}\tilde{W}$ is such that $x\geq w$. 
The module $\nabla(w)$ is the {\em costandard module} corresponding to 
$\pi(L(w\cdot 0))$ with respect to the properly stratified structure of 
$\mathcal{S}_{\lambda}$. 

\begin{corollary}\label{cor98325}
Let $w\in {}^{\lambda}\tilde{W}$ be such that $\pi(L(w\cdot 0))$ is Kostant.
Then, for all $x\in {}^{\lambda}\tilde{W}$ and
$i\in\mathbb{Z}_{\geq 0}$, we have:
\begin{displaymath}
\mathrm{Ext}_{\mathcal{S}_{\lambda}}^i(\pi(L(w\cdot 0)),\nabla(x))\cong
\begin{cases}
\mathbb{C},&\text{if }\mu^{\lambda}(w,x)\neq 0\text{ and }l(x)=l(w)+i;\\
0,& \text{otherwise} 
\end{cases}
\end{displaymath}
\end{corollary}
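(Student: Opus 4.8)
The plan is to read off the answer from the explicit BGG resolution of $\pi(L(w\cdot 0))$ constructed in the proof of Theorem~\ref{thmnew9557}. Since $\pi(L(w\cdot 0))$ is Kostant, that proof produces a BGG resolution
\begin{displaymath}
Q^\bullet:\quad \ldots \to Q^1 \to Q^0 \to \pi(L(w\cdot 0)) \to 0
\end{displaymath}
in $\mathcal{S}_\lambda$, namely $Q^\bullet = \pi(\Psi(\tilde R^\bullet))$, where $\tilde R^\bullet$ is the subcomplex of the regular BGG complex of $L(w^{-1}\cdot 0)$ indexed by $X_{w^{-1}} \subseteq \tilde{W}^\lambda$, so that its $i$-th term is $\bigoplus_{x' \in X^i_{w^{-1}}} \Delta(x'\cdot 0)$. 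First I would record that $\pi\circ\Psi$ relabels structural objects by inversion $u\mapsto u^{-1}$ (this is exactly what sends $L(w^{-1}\cdot 0)$ to $\pi(L(w\cdot 0))$ in that proof), so that $Q^i = \bigoplus_{x\in Z^i} \pi(\Delta(x\cdot 0))$ is a direct sum of proper standard objects, with $Z^i = \{x\in {}^\lambda\tilde{W}\colon x^{-1}\in X^i_{w^{-1}}\}$. Because inversion is a length-preserving automorphism of $(W,\leq)$ restricting to a poset isomorphism $\tilde{W}^\lambda\cong {}^\lambda\tilde{W}$, and $\mu^\lambda$ depends only on the underlying interval, one has $x\in Z^i$ if and only if $w\leq x$, $\mu^\lambda(w,x)\neq 0$ and $l(x)=l(w)+i$. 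In particular each $x\in {}^\lambda\tilde{W}$ lies in at most one $Z^i$, the one with $i=l(x)-l(w)$.

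Next I would apply $\operatorname{Hom}_{\mathcal{S}_\lambda}(-,\nabla(x))$ to $Q^\bullet$. The homological input is the orthogonality between proper standard and costandard modules in the properly stratified algebra $eAe$ governing $\mathcal{S}_\lambda$,
\begin{displaymath}
\operatorname{Ext}^j_{\mathcal{S}_\lambda}\big(\pi(\Delta(y\cdot 0)),\nabla(x)\big) = \begin{cases} \mathbb{C}, & j=0 \text{ and } x=y; \\ 0, & \text{otherwise,} \end{cases}
\end{displaymath}
for $x,y\in {}^\lambda\tilde{W}$; in particular every proper standard object is acyclic for $\operatorname{Hom}_{\mathcal{S}_\lambda}(-,\nabla(x))$. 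Since $Q^\bullet$ is an exact resolution by such acyclic objects, the hyper-$\operatorname{Ext}$ spectral sequence $E_1^{p,q}=\operatorname{Ext}^q_{\mathcal{S}_\lambda}(Q^p,\nabla(x))\Rightarrow \operatorname{Ext}^{p+q}_{\mathcal{S}_\lambda}(\pi(L(w\cdot 0)),\nabla(x))$ collapses onto its bottom row, giving
\begin{displaymath}
\operatorname{Ext}^i_{\mathcal{S}_\lambda}\big(\pi(L(w\cdot 0)),\nabla(x)\big) = H^i\big(\operatorname{Hom}_{\mathcal{S}_\lambda}(Q^\bullet,\nabla(x))\big).
\end{displaymath}
By the orthogonality the $i$-th term of $\operatorname{Hom}_{\mathcal{S}_\lambda}(Q^\bullet,\nabla(x))$ is $\mathbb{C}$ if $x\in Z^i$ and $0$ otherwise; as $x$ lies in at most one $Z^i$, this complex is concentrated in a single homological degree, all its differentials vanish, and its cohomology is exactly the value asserted in the statement.

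The main obstacle is the orthogonality/acyclicity step: this is where the properly stratified structure of $\mathcal{S}_\lambda$ (in the sense of \cite{Dl}) is genuinely needed, playing the role that the quasi-hereditary identity $\operatorname{Ext}^\bullet(\Delta,\nabla)=\mathbb{C}$ concentrated in degree $0$ plays in the unstratified case. The only other point requiring care is the index bookkeeping through the inversion $x\mapsto x^{-1}$ identifying $\tilde{W}^\lambda$ with ${}^\lambda\tilde{W}$ and transporting $\mu^\lambda$; this is routine given Proposition~\ref{proposition:mu_Wlambda} together with the fact that inversion preserves both length and the Bruhat order.
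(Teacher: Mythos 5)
Your proposal is correct and follows essentially the same route as the paper: both take the BGG resolution of $\pi(L(w\cdot 0))$ from Theorem~\ref{thmnew9557}, invoke the homological orthogonality between proper standard and costandard objects in the properly stratified category $\mathcal{S}_\lambda$ (Dlab), and then read off the $\operatorname{Ext}$-groups from the explicit indexing of the resolution. The only cosmetic difference is that the paper phrases the computation via Happel's Lemma~III.2.1 in the homotopy category, whereas you use the equivalent hyper-$\operatorname{Ext}$/acyclic-resolution collapse, and you spell out the inversion bookkeeping $x\mapsto x^{-1}$ that the paper leaves implicit.
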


\begin{proof}
Since costandard modules are homologically right dual to proper standard modules,
see \cite[Theoreme~5]{Dl}, we can use \cite[Lemma~III.2.1]{happel1988triangulated} 
to compute the extension space in question in the homotopy category of complexes 
using the resolution provided by
Theorem~\ref{thmnew9557}. The claim now follows from the explicit form
of the complex \eqref{equation:singular_BGG}.
\end{proof}

\subsection{The right cell of the dominant weight}\label{s9.4}

Projective-injective modules in $\mathcal{O}_0^{\lambda}$ are indexed 
by the elements in the right Kazhdan-Lusztig cell of $w_{0}^{\lambda}w_0$,
see \cite{Ir85}. The Koszul dual of this statement is that 
simple modules of minimal Gelfand-Kirillov dimension 
in $\mathcal{O}_{\lambda}$ are indexed by the left Kazhdan-Lusztig 
cell of $w_{0}^{\lambda}$. Applying $\Psi$, we obtain that 
simple modules of minimal Gelfand-Kirillov dimension 
in $\mathcal{S}_{\lambda}$ are indexed by the right Kazhdan-Lusztig 
cell of $w_{0}^{\lambda}$. 

In many cases, see \cite{IS}, parabolic category $\mathcal{O}$ has a 
simple projective module (any such module is also injective as 
$\mathcal{O}$ has a simple preserving duality). 
For example, this is always the case in type $A$.
If a simple projective module in $\mathcal{O}^{\lambda}$ exists,
one of the projective-injective modules in the regular block is
obtained by translating this simple projective module from the wall
to the regular block. The resulting indecomposable projective-injective module
always has a predictable generalized Verma flag (just like
$P(w_0\cdot 0)$ in $\mathcal{O}_0$). By 
Proposition~\ref{item:proj_mult_free-st}, translation to 
$\mathcal{O}_{\lambda}$ of the Koszul dual simple of 
this indecomposable projective injective module has a BGG resolution.
Therefore, by Theorem~\ref{thmnew9557}, the corresponding
simple object in $\mathcal{S}_{\lambda}$ also has a BGG resolution.
These arguments prove the following statement.

\begin{corollary}\label{cornewnhstd}
Assume that $\mathcal{O}^{\lambda}$ has a simple projective module. There
is $w\in  {}^{\lambda}\tilde{W}$ in the right Kazhdan-Lusztig cell
of $w_0^{\lambda}$ such that $\pi(L(w\cdot 0))$ has a BGG resolution
in $\mathcal{S}_{\lambda}$.
\end{corollary}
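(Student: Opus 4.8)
The plan is to read the chain of equivalences assembled in Sections~\ref{s6} and~\ref{s9} in the favorable direction, starting from the hypothesis and ending at a Kostant module to which Theorem~\ref{thmnew9557} applies. First I would translate the given simple projective off its wall into the regular block $\mathcal{O}^\lambda_0$, obtaining an indecomposable projective-injective module which, by the indexing of Irving \cite{Ir85} recalled in Subsection~\ref{s9.4}, has the form $P^\lambda(\hat u \cdot 0)$ with $\hat u \in {}^\lambda W$ lying in the right Kazhdan--Lusztig cell of $w_0^\lambda w_0$; here $u \in \tilde{W}^\lambda$ denotes the Koszul-dual parameter, determined by $\hat u = u^{-1} w_0$. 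The crucial structural input is that this translated module carries a \emph{predictable} generalized Verma flag, exactly as the antidominant projective $P(w_0 \cdot 0)$ does in $\mathcal{O}_0$. This is the only non-formal point, and the one where I expect to spend the most care: one has to argue that translating a simple module off a wall produces a projective whose generalized Verma subquotients all occur in the predictable graded degrees, so that Proposition~\ref{item:proj_mult_free-st} can be fed.

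Granting this, Proposition~\ref{item:proj_mult_free-st} converts the predictable generalized Verma flag of $P^\lambda(\hat u \cdot 0)$ into exactness of the singular BGG complex of $L(u \cdot \lambda) \in \mathcal{O}_\lambda$, so $L(u \cdot \lambda)$ is a Kostant module in the sense of Subsection~\ref{subsection:Kostant} by Proposition~\ref{item:kostantprop}. Setting $w := u^{-1}$ and using that inversion interchanges $\tilde{W}^\lambda$ with ${}^\lambda \tilde{W}$, I then have $w \in {}^\lambda \tilde{W}$ with $L(w^{-1} \cdot \lambda) = L(u \cdot \lambda)$ Kostant, whence Theorem~\ref{thmnew9557} yields a BGG resolution of $\pi(L(w \cdot 0))$ in $\mathcal{S}_\lambda$.

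It remains to place $w$ in the right cell of $w_0^\lambda$, which I would do by propagating the cell of $\hat u$ through the two dualities recorded in Subsection~\ref{s9.4}: Koszul duality sends the projective-injective indexed by the right cell of $w_0^\lambda w_0$ to a simple of minimal Gelfand--Kirillov dimension in $\mathcal{O}_\lambda$, indexed by the left cell of $w_0^\lambda$, so $u$ lies in that left cell; then, since $w_0^\lambda$ is an involution and inversion exchanges left and right cells, $w = u^{-1}$ lies in the right cell of $w_0^\lambda$, compatibly with Soergel's self-equivalence $\Psi$ of Proposition~\ref{propsoer}, which inverts the indexing parameter. The one thing demanding vigilance throughout is the consistency of the several inversions, namely $\hat u = u^{-1} w_0$ and $w = u^{-1}$, with the passage between left and right cells; once this bookkeeping is pinned down, the statement follows.
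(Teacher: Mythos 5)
Your proposal follows exactly the paper's own argument: translate the simple projective off the wall to get a projective-injective module in $\mathcal{O}^\lambda_0$ with a predictable generalized Verma flag (the paper asserts this point with the same brevity, comparing it to $P(w_0\cdot 0)$ in $\mathcal{O}_0$), feed it into Proposition~\ref{item:proj_mult_free-st} and Theorem~\ref{thmnew9557}, and track the cell through the Koszul duality and $\Psi$ statements of Subsection~\ref{s9.4}. The inversion bookkeeping ($\hat u = u^{-1}w_0$, $w = u^{-1}$, left versus right cells of $w_0^\lambda$) is handled correctly and matches the paper.
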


This corrects the main result of \cite{futorny1998bgg} which, in the special 
case $|W_{\lambda}|=2$, claimed that $\pi(L(w_0^{\lambda}\cdot 0))$ has
a BGG resolution in $\mathcal{S}_{\lambda}$. The BGG complex constructed
in \cite{futorny1998bgg} is, indeed, a complex, cf. 
Proposition~\ref{proposition:singuar_BGG_dominant},
However, it is not always exact.
As Subsection~\ref{s7.2} shows, exactness fails, for example, in 
type $A_3$ with singularity $s_2$.

\vspace{1cm}

\noindent
V.~M.: Department of Mathematics, Uppsala University, Box. 480,
SE-75106, Uppsala, SWEDEN, email: {\tt mazor\symbol{64}math.uu.se}

\noindent
R.~M.: Department of Mathematics, Uppsala University, Box. 480,
SE-75106, Uppsala, SWEDEN, email: {\tt rafael.mrden\symbol{64}math.uu.se}\\
On leave from: Faculty of Civil Engineering, University of Zagreb, Fra 
Andrije Ka\v{c}i\'{c}a-Mio\v{s}i\'{c}a 26, 10000 Zagreb, CROATIA

\end{document}